\documentclass[a4paper,11pt,reqno]{amsart}
\usepackage[english]{babel}
\usepackage[utf8]{inputenc}
\usepackage[T1]{fontenc}
\usepackage{amsthm,amsmath,amssymb,amsrefs,enumerate,enumitem,mathtools,mathrsfs,lmodern,microtype}
\usepackage[a4paper, left=2.5cm, right=2.5cm]{geometry}
\usepackage[colorlinks=true, linkcolor=blue, citecolor=red, urlcolor=blue]{hyperref}

\theoremstyle{plain}
\newtheorem{thm}{Theorem}[section]
\newtheorem*{thm*}{Theorem}
\newtheorem{prop}[thm]{Proposition}
\newtheorem{lemma}[thm]{Lemma}

\theoremstyle{definition}

\theoremstyle{remark}
\newtheorem*{rmk}{Remark}

\newtheorem{assumption}[thm]{Assumption}

\newcommand{\R}{\mathbb{R}}

\newcommand{\E}{\mathbb{E}}
\newcommand{\Prob}{\mathbb{P}}
\newcommand{\cF}{\mathcal{F}}
\newcommand{\cH}{\mathcal{H}}
\newcommand{\cL}{\mathcal{L}}
\newcommand{\cS}{\mathcal{S}}
\newcommand{\cE}{\mathcal{E}}
\newcommand{\cW}{\mathcal{W}}

\newcommand{\al}{\alpha}

\newcommand{\la}{\lambda}

\newcommand{\eps}{\varepsilon}
\newcommand{\Ga}{\Gamma}

\setlist{nosep}
\setlist{noitemsep}
\numberwithin{equation}{section}

\binoppenalty=\maxdimen
\relpenalty=\maxdimen

\title[Fractional Navier-Stokes with Hermite Noise]{Fractional Navier-Stokes Equations with Caputo Derivative Driven by Hermite Noise}
\author{Atef Lechiheb}
\address{Déppartement de mathématique, Toulouse school of economics,  Universit\'e de Toulouse Capitole, Toulouse, France}
\email{atef.lechiheb@tse-fr.eu}

\usepackage{fancyhdr}
\begin{document}

\begin{abstract}
We study time-fractional stochastic Navier-Stokes equations on a bounded domain of $\R^2$ (the restriction to dimension two is essential for the bilinear estimates via Sobolev embeddings) driven by a Hermite process $Z_H^k$ of order $k\ge1$ and Hurst parameter $H\in(1/2,1)$. This class of noises generalizes fractional Brownian motion ($k=1$) and the Rosenblatt process ($k=2$). We construct the Wiener integral with respect to $Z_H^k$ and establish sharp $L^p$ estimates via hypercontractivity, explicitly capturing the dependence on $k$. Using a refined Hilbert-Schmidt estimate for the Mittag-Leffler operator, we prove that the stochastic convolution belongs to $\dot{H}^\nu$ under the condition $\al(1-\nu)+2H>2$. A fixed-point argument in a weighted space yields the existence, uniqueness, and H\"older regularity of mild solutions. We also prove a non-central limit theorem linking the solution to discrete approximations.
\end{abstract}

\subjclass[2020]{60H15 (Primary); 35R60; 76D06; 26A33 (Secondary)}

\keywords{Fractional Navier-Stokes equations, Hermite process, Caputo derivative, Stochastic convolution, Mittag-Leffler function, Wiener chaos}

\maketitle

\section{Introduction}
\label{sec:intro}

\subsection{Overall motivation}

The stochastic Navier-Stokes equations are a fundamental model for turbulent fluid flows. Since the work of Bensoussan and Temam \cite{bensoussan1973equations}, these equations have been extensively studied, first with Gaussian white noise \cite{da2002two, mikulevicius2005global}, and more recently with fractional Brownian motion \cite{zou2018stochastic}. The deterministic Navier-Stokes equations describe the motion of viscous incompressible fluids. Adding a random forcing allows one to model turbulence, thermal fluctuations, and external noise. For ergodic properties, see \cite{flandoli1995ergodicity, hairer2006ergodicity}. For related results on stochastic Burgers equation with Hermite noise, see \cite{lechiheb2026burgers}.

In many physical systems, memory effects play an important role. Complex fluids, viscoelastic materials, and flows in porous media exhibit anomalous diffusion that cannot be captured by ordinary derivatives. Fractional calculus provides a natural framework for such phenomena \cite{podlubny1999fractional, kilbas2006theory}. The Caputo derivative is particularly convenient because it allows initial conditions with a clear physical interpretation. Zou, Lv and Wu \cite{zou2018stochastic} combined these ideas: they studied stochastic Navier-Stokes equations with a Caputo time-fractional derivative driven by fractional Brownian motion. They proved existence, uniqueness, and H\"older regularity of mild solutions using Mittag-Leffler operators and stochastic convolution estimates. For deterministic counterparts, see \cite{carvalho2015mild, zhou2017time, zhou2017weak}.

The present paper extends their work by replacing fractional Brownian motion with a Hermite process $Z_H^k$. For an integer $k\ge1$ and Hurst parameter $H\in(1/2,1)$, the Hermite process is defined as a $k$-fold Wiener-It\^o integral. It shares the covariance of fractional Brownian motion,
\begin{equation}
\label{eq:hermite_cov}
\E[Z_H^k(t)Z_H^k(s)] = \frac12\bigl(t^{2H}+s^{2H}-|t-s|^{2H}\bigr),
\end{equation}
but is non-Gaussian for $k\ge2$. For $k=1$ we recover fractional Brownian motion; for $k=2$ we obtain the Rosenblatt process \cite{tudor2008analysis}, which appears naturally in non-central limit theorems for quadratic functionals of Gaussian sequences \cite{dobrushin1979non, taqqu1979convergence}. The stochastic calculus for Hermite processes was developed by Maejima and Tudor \cite{maejima2007wiener}, who defined Wiener integrals and proved a non-central limit theorem. For related SPDEs with Hermite noise, see \cite{lechiheb2026burgers, slaouitudor2019}.

\subsection{Description of the model}

We consider the following time-fractional stochastic Navier-Stokes equations on a bounded domain $D\subset\R^2$ with Dirichlet boundary conditions:
\begin{equation}
\label{eq:main}
\begin{cases}
{}^C D_t^\al u = -A u + B(u) + f(u) + \dot{Z}_H^k(t), \quad t>0,\\
u(0) = u_0.
\end{cases}
\end{equation}
Here ${}^C D_t^\al$ is the Caputo derivative of order $\al\in(0,1)$, $A$ is the Stokes operator, $B(u)=B(u,u)$ with $B(u,v)=-P_H[(u\cdot\nabla)v]$ is the bilinear convective term, $f$ is a Lipschitz continuous external force, and $\dot{Z}_H^k$ is the formal time derivative of the Hermite process. The mild formulation of \eqref{eq:main} is
\begin{equation}
\label{eq:mild}
u(t) = E_\al(t) u_0 + \int_0^t \cS_\al(t-s) \bigl[B(u(s)) + f(u(s))\bigr] \, ds + Z_k(t),
\end{equation}
where $E_\al$ and $\cS_\al(t)=t^{\al-1}E_{\al,\al}(t)$ are Mittag-Leffler operators, and $Z_k(t)=\int_0^t \cS_\al(t-s)\,dZ_H^k(s)$ is the stochastic convolution.

Three main difficulties arise when analyzing \eqref{eq:mild}. First, the stochastic integral with respect to $Z_H^k$ must be defined and its $L^p$ moments controlled. For $k\ge2$, the noise is non-Gaussian, so classical Gaussian techniques fail. Second, the operator $\cS_\al$ has a singularity at $t=0$ and its smoothing properties in the scale $\dot{H}^\nu$ must be quantified, in particular in Hilbert-Schmidt norm. Third, the bilinear term $B(u)$ forces a fixed-point argument in a weighted space to handle the singularity of $\cS_\al$ near zero.

\subsection{Main results}

We now state the main contributions of this paper.

\begin{thm}[$L^p$ estimates for the Hermite integral]\label{thm:hyper}
For any deterministic function $f$ in the covariance Hilbert space $\cH$ of $Z_H^k$ and any $p\ge2$,
\[
\Big\|\int_{\R} f(u)\,dZ_H^k(u)\Big\|_{L^p(\Omega)} \le (p-1)^{k/2} \|f\|_{\cH}.
\]
\end{thm}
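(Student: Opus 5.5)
The plan is to realize $\int_{\R} f(u)\,dZ_H^k(u)$ as a multiple Wiener--Itô integral of order $k$ with respect to an underlying Brownian motion. Then we combine the isometry of multiple integrals with the hypercontractivity of the Ornstein--Uhlenbeck semigroup on a fixed chaos.

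First we recall the kernel representation of the Hermite process,
\[
Z_H^k(t) = c(H,k)\int_{\R^k}\int_0^t \prod_{j=1}^k (s-y_j)_+^{-\left(\frac12+\frac{1-H}{k}\right)}\,ds\,dW(y_1)\cdots dW(y_k),
\]
with $c(H,k)$ normalized so that \eqref{eq:hermite_cov} holds. For elementary $f=\sum_i a_i 1_{(t_i,t_{i+1}]}$ we define the integral by linearity. This gives $\int f\,dZ_H^k = I_k(If)$, where $I_k$ is the $k$-fold Wiener--Itô integral and
\[
(If)(y_1,\dots,y_k)=c(H,k)\int_{\R} f(u)\prod_{j=1}^k (u-y_j)_+^{-\left(\frac12+\frac{1-H}{k}\right)}\,du .
\]
The kernel $If$ is symmetric in $(y_1,\dots,y_k)$.

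Next we use the isometry $\E[I_k(g)^2]=k!\,\|g\|_{L^2(\R^k)}^2$ for symmetric $g$. Expanding $k!\|If\|^2_{L^2(\R^k)}$ and applying Fubini, each $y_j$-integral produces $\int_{\R}(u-y)_+^{-(\frac12+\frac{1-H}{k})}(v-y)_+^{-(\frac12+\frac{1-H}{k})}dy = \beta\,|u-v|^{(2H-2)/k}$, where $\beta$ is a beta-function constant. Taking the product over $j$ gives a constant times $|u-v|^{2H-2}$. With the normalization of $c(H,k)$ we obtain
\[
\E\Big[\Big(\int f\,dZ_H^k\Big)^2\Big]=H(2H-1)\iint f(u)f(v)|u-v|^{2H-2}\,du\,dv=\|f\|_{\cH}^2 .
\]
This shows that $f\mapsto\int f\,dZ_H^k$ is an isometry from elementary functions into $L^2(\Omega)$. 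It therefore extends to the completion $\cH$, and the image lies in the closed $k$-th Wiener chaos $\cH_k$, since $\cH_k$ is closed in $L^2$. This is exactly the Maejima--Tudor construction, and it gives the $p=2$ case of the theorem.

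For $p>2$ we invoke hypercontractivity (Nelson's theorem): $\|P_tF\|_{L^p}\le\|F\|_{L^2}$ whenever $p\le 1+e^{2t}$. For $F\in\cH_k$ we have $P_tF=e^{-kt}F$. Choosing $e^{2t}=p-1$ gives $\|F\|_{L^p}\le e^{kt}\|F\|_{L^2}=(p-1)^{k/2}\|F\|_{L^2}$. Applying this to $F=\int f\,dZ_H^k$ and using the isometry yields the claim. For general $f\in\cH$ we can either pass to the limit directly, or note that the $L^2$ convergence of elementary approximants, combined with the same bound applied to differences, gives $L^p$ convergence.

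The main obstacle is the exact constant computation in the isometry step, that is, checking that the product of the $k$ beta integrals combined with $c(H,k)^2 k!$ yields precisely $H(2H-1)$. We handle this by fixing $c(H,k)$ through the requirement $\E[Z_H^k(1)^2]=1$, so the identity holds by construction. Only the $|u-v|^{2H-2}$ structure needs to be verified, and it comes from the scaling of the $y$-integral. A secondary technicality is that $\cH$ contains distributions. We avoid this by working on the dense subspace $|\cH|$ of functions, or elementary functions, and extending by continuity.
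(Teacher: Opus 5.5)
Your proposal is correct and follows the same route as the paper: you realize $\int f\,dZ_H^k$ as the order-$k$ multiple integral $I_k(If)$ in the $k$-th Wiener chaos, obtain $\|\int f\,dZ_H^k\|_{L^2(\Omega)}=\|f\|_{\cH}$ from the isometry (Lemma~\ref{lem:isometry}, after Maejima--Tudor), and conclude with hypercontractivity on a fixed chaos. The only difference is that you prove steps the paper cites: the product of $k$ beta-type integrals giving $|u-v|^{2H-2}$, with the constant fixed by $\E[Z_H^k(1)^2]=1$; the closedness of the $k$-th chaos, which handles the extension to $\cH$; and the constant $(p-1)^{k/2}$, derived from Nelson's theorem via $P_tF=e^{-kt}F$ and the choice $e^{2t}=p-1$.
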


\begin{thm}[Regularity of the stochastic convolution]\label{thm:Zk}
Assume $0<\al<1$, $H\in(1/2,1)$, $0\le\nu<1$, and $\al(1-\nu)+2H>2$. Then for any $p\ge2$ and $t\in[0,T]$,
\[
\|Z_k(t)\|_{L^p(\Omega;\dot{H}^\nu)} \le C_{p,k} \, t^{\frac{\al(1-\nu)+2H-2}{2}},
\]
with $C_{p,k}=(p-1)^{k/2}C_0$. Moreover, for $0\le t_1<t_2\le T$,
\[
\|Z_k(t_2)-Z_k(t_1)\|_{L^p(\Omega;\dot{H}^\nu)} \le C_{p,k} (t_2-t_1)^{\gamma},
\]
where $\gamma = \min\bigl\{\frac{2-(2-\nu)\al}{2},\ \frac{\al(1-\nu)+2H-2}{2}\bigr\}$.
\end{thm}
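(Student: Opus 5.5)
The plan is to diagonalize everything in the eigenbasis $\{e_j\}$ of the Stokes operator $A$, with eigenvalues $\la_j\uparrow\infty$. The noise is understood as $Z_H^k(t)=\sum_j \sigma_j z_j(t)e_j$ (or as a $Q$-Hermite process, with trace-class type conditions on $Q^{1/2}$ assumed as in the setting of the paper). With this expansion,
\[
\|Z_k(t)\|_{\dot H^\nu}^2=\sum_j \la_j^{\nu}\Big|\int_0^t (t-s)^{\al-1}E_{\al,\al}(-\la_j(t-s)^\al)\,dz_j(s)\Big|^2 .
\]
I would first apply Minkowski's inequality in $L^{p/2}(\Omega)$ to the sum. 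Then I would apply Theorem~\ref{thm:hyper} to each scalar Wiener integral. This gives
\[
\|Z_k(t)\|_{L^p(\Omega;\dot H^\nu)}^2\le (p-1)^k\sum_j \la_j^\nu\sigma_j^2\|g_j(t-\cdot)\mathbf 1_{[0,t]}\|_{\cH}^2,
\qquad g_j(r)=r^{\al-1}E_{\al,\al}(-\la_j r^\al).
\]
Since $H>1/2$, the $\cH$ norm is the double integral with kernel $H(2H-1)|u-v|^{2H-2}$. I would bound it by the $L^{1/H}$ norm (Hardy–Littlewood–Sobolev): $\|g\|_{\cH}\le c_H\|g\|_{L^{1/H}(0,t)}$. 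This reduces the whole problem to deterministic estimates on Mittag-Leffler functions, which is the "refined Hilbert–Schmidt estimate" announced in the abstract.

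The key deterministic step uses the bound $|E_{\al,\al}(-x)|\le C/(1+x)$. Interpolating, this gives $\la_j^{\nu/2}|g_j(r)|\le C\la_j^{\nu/2}r^{\al-1}(1+\la_jr^\al)^{-1}\le C r^{\al-1-\al\nu/2}\cdot(\la_j r^\al)^{\nu/2}(1+\la_j r^\al)^{-1}$. Keeping one extra power $\la_j^{-\theta}$ to make the series summable yields $\la_j^{\nu/2}|g_j(r)|\le C\la_j^{-\theta}r^{\al-1-\al(\nu+2\theta)/2}$ with $\nu+2\theta\le 2$. The precise choice of $\theta$ is tied to the summability hypothesis on $\sum\sigma_j^2\la_j^{-2\theta}$; in the natural normalization it gives the exponent $r^{\al(1-\nu)/2-1}$ after absorbing one power of $\la_j^{-1/2}$ into trace class. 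Then
\[
\|r^{\al(1-\nu)/2-1}\|_{L^{1/H}(0,t)}=c\,t^{\al(1-\nu)/2-1+H},
\]
which is finite exactly when $(\al(1-\nu)/2-1)/H>-1$, i.e.\ $\al(1-\nu)+2H>2$. Squaring gives the exponent $t^{\al(1-\nu)+2H-2}$ and, after taking square roots, the first estimate with $C_{p,k}=(p-1)^{k/2}C_0$. Doing the computation directly with the double integral instead of HLS gives the same scaling, and I would fall back on that if HLS is awkward.

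For the increment, I would write
\[
Z_k(t_2)-Z_k(t_1)=\int_{t_1}^{t_2}\cS_\al(t_2-s)\,dZ_H^k(s)+\int_0^{t_1}\big[\cS_\al(t_2-s)-\cS_\al(t_1-s)\big]\,dZ_H^k(s)=:I_1+I_2 .
\]
$I_1$ is handled exactly as above on an interval of length $t_2-t_1$, giving $(t_2-t_1)^{(\al(1-\nu)+2H-2)/2}$. For $I_2$, I would use $\frac{d}{dr}g_j(r)=r^{\al-2}E_{\al,\al-1}(-\la_j r^\al)$ with $|E_{\al,\al-1}(-x)|\le C/(1+x)$. This gives
\[
|g_j(r_2)-g_j(r_1)|\le \int_{r_1}^{r_2}\rho^{\al-2}(1+\la_j\rho^\al)^{-1}d\rho .
\]
I would interpolate between this and the trivial bound $|g_j(r_1)|+|g_j(r_2)|$ to obtain $\la_j^{\nu/2}|g_j(t_2-s)-g_j(t_1-s)|\lesssim (t_2-t_1)^{\gamma'}(t_1-s)^{\beta}$ for suitable exponents. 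The factor $(t_2-t_1)^{(2-(2-\nu)\al)/2}$ is the Hölder exponent coming from the derivative kernel weighted by $\la_j^{\nu/2}$. Feeding this into Theorem~\ref{thm:hyper} and HLS as before, the $s$-integral stays finite under the same condition. Taking the worse of the two exponents yields $\gamma$.

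The main obstacle is this increment estimate for $I_2$. One has to balance the power of $\la_j$ used for summability against the power of $(t_2-t_1)$ extracted, while keeping the remaining singularity in $t_1-s$ integrable in $L^{1/H}$. I expect to need splitting the $s$-integral into $s<t_1-(t_2-t_1)$ (where the mean value bound applies) and $s$ close to $t_1$ (where the trivial bound is used, as for $I_1$).
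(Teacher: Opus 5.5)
\textbf{Verdict: there is a genuine gap, in the deterministic step, relative to the paper's setting.} Your probabilistic reduction is fine. Minkowski in $L^{p/2}(\Omega)$, then Theorem~\ref{thm:hyper} mode by mode, then $\|g\|_{\cH}\le c_H\|g\|_{L^{1/H}}$, correctly reduces everything to bounding $\sum_j\la_j^\nu\sigma_j^2\|g_j\|_{L^{1/H}}^2$. It does not even need independence of the $z_j$.

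The problem is where you extract $\la_j^{-\theta}$ uniformly in $r$. With $\theta=\frac12$ this requires $\sum_j\sigma_j^2\la_j^{-1}<\infty$, a trace-class condition that the paper does not assume. The noise in the paper is cylindrical on $L^2_\sigma(D)$, i.e.\ $\sigma_j\equiv1$. You can see this because Lemma~\ref{lem:isometry_Zk} is written with the Hilbert--Schmidt norm of $\cL_2(L^2,\dot H^\nu)$, and Lemma~\ref{lem:HS} sums over the plain eigenbasis. By Weyl's law $\la_j\asymp j$ in two dimensions, so $\sum_j\la_j^{-1}$ diverges. If you take $\theta>\frac12$ to restore summability, the time exponent becomes $\frac{\al(1-\nu)+2H-2}{2}-\al(\theta-\frac12)$. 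You then lose an $\eps$ and do not reach the stated rate. Under your extra assumption the argument would close, but it would prove a different statement. The restrictions $d=2$ and $\nu<1$ are exactly what the mode sum costs for cylindrical noise.

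\textbf{The missing idea} is to keep the factor $(1+\la_jr^\al)^{-1}$ when you take the $L^{1/H}$ norm, rather than trading it for a fixed power of $\la_j$.
Dropping the factor is legitimate, since $\al+H>1$ follows from the hypothesis; it gives $\|g_j\|_{L^{1/H}(0,t)}\le Ct^{\al+H-1}$. Keeping it and substituting $r=\la_j^{-1/\al}\rho$ gives $\|g_j\|_{L^{1/H}(0,\infty)}\le C\la_j^{-(\al+H-1)/\al}$.
Now sum $\la_j^\nu\min\{t^{2(\al+H-1)},\la_j^{-2(\al+H-1)/\al}\}$ using $c_1j\le\la_j\le c_2j$.
\begin{itemize}
\item The modes with $\la_j\le t^{-\al}$ contribute $t^{-\al(\nu+1)}t^{2(\al+H-1)}=t^{\al(1-\nu)+2H-2}$.
\item The tail converges if and only if $\al(1-\nu)+2H>2$, and it gives the same power.
\end{itemize}
So in your route the hypothesis appears as summability over high modes. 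In the paper's route it appears as time integrability: the paper sums over $j$ first at fixed time (Lemma~\ref{lem:HS}, $\|\cS_\al(r)\|_{\cL_2}^2\le Cr^{\al(1-\nu)-2}$), then applies Cauchy--Schwarz and the double integral of $x^ay^a|x-y|^{2H-2}$.

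The same device settles your $I_2$ with no interpolation or splitting. Write $h=t_2-t_1$. From $|g_j'(\rho)|\le C\rho^{\al-2}$ and scaling, $\|g_j(\cdot+h)-g_j\|_{L^{1/H}(0,\infty)}\le C\min\{h^{\al+H-1},\la_j^{-(\al+H-1)/\al}\}$. This sums to $h^{\frac{\al(1-\nu)+2H-2}{2}}\le Ch^{\gamma}$. You do not need to extract the exponent $\frac{2-(2-\nu)\al}{2}$ from the derivative kernel at all; it enters $\gamma$ only through the minimum.

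This per-mode route is sturdier than the paper's handling of that term. The paper inserts the operator-norm bound of Lemma~\ref{lem:Salpha_basic} into a Hilbert--Schmidt inner product. That bound cannot hold uniformly as $t_1-u\to0$, since $\|\cS_\al(r)\|_{\cL(\dot H^0,\dot H^\nu)}\to\infty$ as $r\to0$.
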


\begin{rmk}
The condition $\al(1-\nu)+2H>2$ is compatible with all our other assumptions.
Explicit admissible parameter sets for Theorem~\ref{thm:Zk} alone include:
\begin{itemize}
  \item $\al=0.9$, $\nu=0.1$, $H=0.8$: $0.9\times0.9+1.6=2.41>2$.
  \item $\al=0.7$, $\nu=0.3$, $H=0.9$: $0.7\times0.7+1.8=2.29>2$.
  \item $\al=0.5$, $\nu=0.5$, $H=0.95$: $0.5\times0.5+1.9=2.15>2$.
\end{itemize}
Note that Theorem~\ref{thm:Zk} requires only $0\le\nu<1$, while the
existence result Theorem~\ref{thm:exist} imposes the additional
constraints $0<\nu<1/2$ and $\al(\nu+1)<1$. The first three examples
above are valid for Theorem~\ref{thm:Zk} but only the first two
satisfy $\nu<1/2$; all three satisfy $\al(\nu+1)<1$ (verified:
$0.9\times1.1=0.99<1$, $0.7\times1.3=0.91<1$, $0.5\times1.5=0.75<1$).
The constraint becomes more restrictive as $H\to 1/2^+$ or $\al\to 0^+$.
\end{rmk}

\begin{thm}[Existence and uniqueness of mild solutions]\label{thm:exist}
Let $u_0\in L^p(\Omega,\cF_0;\dot{H}^\nu)$ and assume
\[
0<\nu<\frac12,\qquad \al(1-\nu)+2H>2,\qquad \al(\nu+1)<1.
\]
Then there exists $T^*>0$ and a unique mild solution $u$ to \eqref{eq:mild} in the weighted space
\[
\cW_{T^*} = \Big\{ u : \E\sup_{t\in[0,T^*]}\|u(t)\|_\nu^p + \E\sup_{t\in(0,T^*]} t^{\frac{p\al(\nu+1)}{2}}\|u(t)\|_{\nu+1}^p < \infty \Big\}.
\]
\end{thm}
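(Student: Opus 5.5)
We will obtain the mild solution as a fixed point of the map
\[
(\Phi u)(t)=E_\al(t)u_0+\int_0^t\cS_\al(t-s)\bigl[B(u(s))+f(u(s))\bigr]ds+Z_k(t)
\]
on a closed ball of $\cW_{T}$, with the norm $\|u\|_{\cW_T}^p=\E\sup_{t\le T}\|u(t)\|_\nu^p+\E\sup_{0<t\le T}t^{p\al(\nu+1)/2}\|u(t)\|_{\nu+1}^p$. The plan has four steps.

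\textbf{Step 1: linear terms.} We use the standard Mittag-Leffler smoothing bounds
\[
\|A^{\beta}E_\al(t)\|\le Ct^{-\al\beta}, \qquad \|A^{\beta}\cS_\al(t)\|\le Ct^{\al(1-\beta)-1}, \qquad 0\le\beta\le1 .
\]
These give $\|E_\al(t)u_0\|_\nu\le C\|u_0\|_\nu$ and $t^{\al(\nu+1)/2}\|E_\al(t)u_0\|_{\nu+1}\le C\|u_0\|_\nu$, since we gain one half power of $A$ at the cost $t^{-\al/2}$. This is where we adjust the weight exponent to the convention $\|v\|_\nu=\|A^{\nu/2}v\|$ if needed.

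For $Z_k$ we invoke Theorem~\ref{thm:Zk}, which is available because $\al(1-\nu)+2H>2$. The Hölder estimate in time together with Kolmogorov's criterion, for $p$ large (hypercontractivity, Theorem~\ref{thm:hyper}, lets $p$ be arbitrary), gives $\E\sup_t\|Z_k(t)\|_\nu^p<\infty$, and this bound tends to $0$ as $T\to0$. For the weighted $\dot H^{\nu+1}$ part of $Z_k$ we cannot apply Theorem~\ref{thm:Zk} directly, because it requires the regularity index to be $<1$. We would instead split the integral with $\cS_\al(t-s)=\cS_\al(t-s)$ and use the weight $t^{\al(\nu+1)/2}$ to absorb the extra singularity. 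Alternatively, we treat $Z_k$ as a fixed additive forcing and run the fixed point for $v=u-Z_k$. The second route is cleaner, since then only $\sup\|Z_k\|_\nu$ enters, through the bilinear estimate. I would adopt it.

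\textbf{Step 2: bilinear estimate.} In dimension two, Sobolev embeddings give
\[
\|A^{-\delta}B(u,v)\|\le C\|u\|_{\nu}\|v\|_{\nu+1}
\]
for a suitable $\delta$ depending on $\nu\in(0,1/2)$, for instance $\delta=\frac{1-2\nu}{2}$ up to the paper's scaling. The computation is $\|(u\cdot\nabla)v\|_{L^q}\le\|u\|_{L^{r}}\|\nabla v\|_{L^{s}}$ with $\dot H^\nu\hookrightarrow L^{2/(1-\nu)}$, followed by duality.

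Then
\[
\Bigl\|\int_0^t\cS_\al(t-s)B(u(s))\,ds\Bigr\|_{\nu}\le C\int_0^t(t-s)^{\al(1-\theta)-1}s^{-\al(\nu+1)/2}\,ds\;\sup_s\|u\|_\nu\,\sup_s s^{\al(\nu+1)/2}\|u\|_{\nu+1},
\]
where $\theta$ collects $\nu/2+\delta$. This is a Beta integral, equal to $Ct^{\kappa}$ with $\kappa>0$. Its convergence at $s=0$ requires $\al(\nu+1)/2<1$, which follows from $\al(\nu+1)<1$. The same computation in the weighted $\dot H^{\nu+1}$ norm yields $t^{\al(\nu+1)/2}\cdot t^{\kappa'}$, which stays bounded with $\kappa'>0$; this is where $\al(\nu+1)<1$ is really used. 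The Lipschitz term $f$ is easier and gives a factor $T^{\al(1-\cdot)}$.

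\textbf{Step 3: contraction.} Since $B(u)-B(w)=B(u-w,u)+B(w,u-w)$, on the ball of radius $R$ we get
\[
\|\Phi u-\Phi w\|_{\cW_T}\le C(T^{\kappa}R+T^{\kappa''})\|u-w\|_{\cW_T}.
\]
Because all norms are pathwise sup norms, we should work pathwise or with a stopping/truncation argument. Choosing $R$ large relative to the initial data and noise bounds, and $T^*$ small, makes $\Phi$ a contraction, and Banach's theorem gives existence and uniqueness in $\cW_{T^*}$. The quadratic term in $L^p(\Omega)$ needs Hölder in $\omega$, i.e. higher moments. A pathwise fixed point with a random $T^*$, or a cutoff $\chi_R$ of $\|u\|_{\cW_t}$ followed by removal of the cutoff, avoids this.

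\textbf{Step 4: main obstacle.} I expect the main obstacle to be matching the exponents in Step 2: one must choose $\delta$ and the embedding so that both Beta integrals converge and give strictly positive powers of $T$ under exactly $0<\nu<1/2$ and $\al(\nu+1)<1$. The secondary obstacle is handling the quadratic nonlinearity in expectation, which I would resolve by the truncation argument above.
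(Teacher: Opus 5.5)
Your diagnosis of the two real difficulties is sharper than the paper's own treatment. These are the $\dot H^{\nu+1}$ part of $Z_k$ and the quadratic term in $L^p(\Omega)$. Even so, the proposal does not prove the statement.

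\textbf{The weighted $\dot H^{\nu+1}$ component is out of reach for $u$.} Neither of your options for $Z_k$ can work, because the obstruction is not a singularity at $t=0$ that a weight could absorb. In the paper's cylindrical setting (the Hilbert--Schmidt isometry of Lemma~\ref{lem:isometry_Zk}), $\E\|Z_k(t)\|_{\nu+1}^2=\infty$ for every $t>0$. Indeed, $s_j(r)=r^{\al-1}E_{\al,\al}(-\lambda_j r^\al)\ge 0$ by \eqref{eq:Ealphaalpha}, and $|u-v|^{2H-2}\ge t^{2H-2}$ on $[0,t]^2$, so
\[
\E\|Z_k(t)\|_{\nu+1}^2\ \ge\ H(2H-1)\,t^{2H-2}\sum_{j\ge1}\lambda_j^{\nu+1}\Bigl(\int_0^t s_j(r)\,dr\Bigr)^2
= H(2H-1)\,t^{2H-2}\sum_{j\ge1}\lambda_j^{\nu-1}\bigl(1-E_\al(-\lambda_j t^\al)\bigr)^2=\infty,
\]
since $\lambda_j\asymp j$. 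This is the sharpness of $\nu<1$ in Lemma~\ref{lem:HS}, not a limitation of Theorem~\ref{thm:Zk}.

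Hence $Z_k\notin\cW_T$ for every $T>0$, and the shift $v=u-Z_k$ can at best place $v$, not $u=v+Z_k$, in $\cW_{T^*}$. In fact no mild solution can lie in $\cW_{T^*}$. If $u\in\cW_{T^*}$, then $E_\al(t)u_0$ and $\int_0^t\cS_\al(t-s)[B(u(s))+f(u(s))]\,ds$ have finite moments in $\dot H^{\nu+1}$ for $t>0$ (using $\al(\nu+1)<1$). So $Z_k(t)$ would too, which the display rules out because all moments are comparable on a fixed chaos.

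The paper's proof runs the fixed point directly for $u$ in $\cW_T$ and passes over exactly this point: Lemma~\ref{lem:Zk_estimate} only asserts that the weighted bound for $Z_k$ ``holds with a better exponent''. What your approach can deliver is well-posedness in $C([0,T];\dot H^\nu)$, with weighted $\dot H^{\nu+1}$ information at most for $u-Z_k$.

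\textbf{Step 2 gives no positive power of $T$.} With $\delta=\frac{1-2\nu}{2}$ (by scaling, no admissible $\delta$ does better) you have $\theta=\frac{1-\nu}{2}$, and
\[
\int_0^t (t-s)^{\frac{\al(1+\nu)}{2}-1}\,s^{-\frac{\al(1+\nu)}{2}}\,ds=\Ga\Bigl(\tfrac{\al(1+\nu)}{2}\Bigr)\Ga\Bigl(1-\tfrac{\al(1+\nu)}{2}\Bigr)
\]
is independent of $t$, i.e.\ $\kappa=0$. The weight $t^{\al(\nu+1)/2}$ on $\dot H^{\nu+1}$ is scale invariant: it is the weight adapted to $L^2$ data, which is critical in 2D. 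Pairing one such factor with one $\dot H^\nu$ factor therefore gains nothing, and your contraction constant in the $\dot H^\nu$ component is $C(R+T^{\kappa''})$, not small.

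Moreover, after the shift, $B(v,Z_k)$ and $B(Z_k,Z_k)$ put $Z_k$ in the slot your estimate measures in $\dot H^{\nu+1}$. So it is not true that only $\sup_t\|Z_k(t)\|_\nu$ enters.

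Both defects disappear with the divergence form $B(u,v)=-P_H\nabla\cdot(u\otimes v)$. From $\dot H^\nu\hookrightarrow L^{2/(1-\nu)}$, and $\nabla\phi\in H^{1-2\nu}\hookrightarrow L^{1/\nu}$ for $\phi\in D(A^{1-\nu})\subset H^1_0$ (here $\nu<\frac12$ is used), one gets $\|B(u,v)\|_{-(2-2\nu)}\le C\|u\|_\nu\|v\|_\nu$. Combined with $\|A^{(2-\nu)/2}\cS_\al(t)\|\le Ct^{\frac{\al\nu}{2}-1}$, this yields
\[
\Bigl\|\int_0^t\cS_\al(t-s)B(u(s),v(s))\,ds\Bigr\|_\nu\le C\,T^{\frac{\al\nu}{2}}\sup_{s\le T}\|u(s)\|_\nu\,\sup_{s\le T}\|v(s)\|_\nu .
\]
This is a genuine contraction in $C([0,T];\dot H^\nu)$, with $\sup_t\|Z_k(t)\|_\nu$ as the only noise input. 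The paper's exponent $\theta_1=\frac{\al(1-2\nu)}{2}$ is also miscomputed: the integral it writes equals a constant times $t^{-3\al\nu/2}$.

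Finally, your pathwise or cut-off treatment of the quadratic term produces a random existence time depending on $\|u_0\|_\nu$ and $\sup_t\|Z_k(t)\|_\nu$. The paper skips this issue by treating $\|u(s)\|_{\nu+1}\le s^{-\al(\nu+1)/2}\|u\|_{\cW_T}$ as a pathwise bound. A random time is not the deterministic $T^*$ with $\E\sup$ bounds in the statement, so you should say explicitly that this weaker conclusion is what you prove.
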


\begin{rmk}
The three conditions $0<\nu<1/2$, $\al(1-\nu)+2H>2$, and $\al(\nu+1)<1$
are simultaneously satisfiable. For instance, $\al=0.8$, $\nu=0.3$,
$H=0.9$ gives: $\nu=0.3<1/2$; $0.8\times0.7+1.8=2.36>2$;
$0.8\times1.3=1.04>1$ — this fails. Instead take $\al=0.7$,
$\nu=0.3$, $H=0.9$: $0.7\times0.7+1.8=2.29>2$;
$0.7\times1.3=0.91<1$; $\nu=0.3<1/2$.
Thus the region of admissible parameters $(\al,\nu,H)$ is non-empty.
\end{rmk}

\begin{thm}[H\"older regularity]\label{thm:holder}
Under the same assumptions as in Theorem~\ref{thm:exist}, for any $0\le t_1<t_2\le T^*$,
\[
\|u(t_2)-u(t_1)\|_{L^p(\Omega;\dot{H}^\nu)} \le C (t_2-t_1)^\beta,
\]
with
\[
\beta = \min\left\{ \frac{\al\nu}{2},\ \frac{2-(2-\nu)\al}{2},\ \frac{\al(1-\nu)+2H-2}{2} \right\}.
\]
\end{thm}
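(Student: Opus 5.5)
We split the increment $u(t_2)-u(t_1)$ according to the mild formulation \eqref{eq:mild} into four pieces:
\[
u(t_2)-u(t_1) = I_1 + I_2 + I_3 + \bigl(Z_k(t_2)-Z_k(t_1)\bigr),
\]
where
\begin{align*}
I_1 &= \bigl(E_\al(t_2)-E_\al(t_1)\bigr)u_0,\\
I_2 &= \int_{t_1}^{t_2}\cS_\al(t_2-s)F(s)\,ds,\\
I_3 &= \int_0^{t_1}\bigl(\cS_\al(t_2-s)-\cS_\al(t_1-s)\bigr)F(s)\,ds,
\end{align*}
with $F(s)=B(u(s))+f(u(s))$. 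Each piece is bounded separately in $L^p(\Omega;\dot H^\nu)$. The stochastic piece is handled directly by Theorem~\ref{thm:Zk}, which gives the rate $\gamma=\min\{\frac{2-(2-\nu)\al}{2},\frac{\al(1-\nu)+2H-2}{2}\}$. These are exactly the second and third entries of $\beta$.

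For $I_1$ we use the spectral representation of $E_\al(t)$ in the eigenbasis of $A$. Write
\[
E_\al(t_2)-E_\al(t_1)=\int_{t_1}^{t_2}\frac{d}{d\tau}E_\al(\tau)\,d\tau.
\]
The standard Mittag-Leffler bounds give $\|\frac{d}{d\tau}E_\al(\tau)A^{-\theta}\|\le C\tau^{\al\theta-1}$ for $\theta\in(0,1]$. Equivalently, one can interpolate between $\|E_\al(t_2)-E_\al(t_1)\|\le 2$ and the bound $|E_\al(-\la t_2^\al)-E_\al(-\la t_1^\al)|\le C\la\,(t_2^\al-t_1^\al)$. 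Either way we get
\[
\|I_1\|_{L^p(\Omega;\dot H^\nu)}\le C(t_2-t_1)^{\al\nu/2}\|u_0\|_{L^p(\Omega;\dot H^\nu)},
\]
after trading $\nu/2$ of a derivative for time regularity. This is the origin of the exponent $\frac{\al\nu}{2}$. Since $u_0$ lies only in $\dot H^\nu$, we will have to borrow regularity from $\nu$ itself. Choosing $\theta=\nu/2$ and keeping the remaining smoothness in $\dot H^{\nu}$ gives exactly $(t_2-t_1)^{\al\nu/2}$, up to the elementary inequality $t_2^{\al\theta}-t_1^{\al\theta}\le (t_2-t_1)^{\al\theta}$.

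For the deterministic convolution terms, the nonlinearity is estimated using the bilinear estimate valid in dimension two:
\[
\|B(u)\|_{\nu-1}\le C\|u\|_\nu\|u\|_{\nu+1},
\]
which follows from the Sobolev embeddings since $\nu<1/2$. Together with the Lipschitz bound on $f$ and the weighted norm of $\cW_{T^*}$ from Theorem~\ref{thm:exist}, this gives
\[
\|F(s)\|_{L^p(\Omega;\dot H^{\nu-1})}\le C\bigl(1+s^{-\al(\nu+1)/2}\bigr).
\]
We combine this with the smoothing bound $\|A^{\theta}\cS_\al(t)\|\le Ct^{\al(1-\theta)-1}$ for $\theta=1/2$, mapping $\dot H^{\nu-1}$ to $\dot H^\nu$. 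For $I_2$ this gives the bound
\[
\int_{t_1}^{t_2}(t_2-s)^{\al/2-1}s^{-\al(\nu+1)/2}\,ds.
\]
This integral is $\lesssim (t_2-t_1)^{\al/2-\al(\nu+1)/2}=(t_2-t_1)^{-\al\nu/2}\cdot(\ldots)$ near $t_1=0$. To get a positive exponent we instead bound $s^{-\al(\nu+1)/2}$ using $\al(\nu+1)<1$ and a Beta-function computation when $t_1$ is near $0$. When $t_1$ is away from $0$ we bound it by $t_1^{-\al(\nu+1)/2}$. In both cases the contribution is $\lesssim (t_2-t_1)^{\al\nu/2}$ or better on the bounded interval $[0,T^*]$, possibly after absorbing a constant depending on $T^*$. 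For $I_3$ we write
\[
\cS_\al(t_2-s)-\cS_\al(t_1-s)=\int_{t_1-s}^{t_2-s}\cS_\al'(\tau)\,d\tau,
\]
using $\|A^{1/2-\epsilon}\cS_\al'(\tau)\|\le C\tau^{\al(1/2+\epsilon)-2}$, and interpolate with the trivial bound to extract a factor $(t_2-t_1)^{\delta}$. The remaining integral in $s$ must stay finite, which is again guaranteed by $\al(\nu+1)<1$.

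The main obstacle is $I_3$ (and to a lesser extent $I_2$). There the singularity $s^{-\al(\nu+1)/2}$ coming from the weighted norm interacts with the derivative singularity of $\cS_\al$ near $s=t_1$, and we need an exponent $\delta\ge\beta$ with an integrable kernel. Our first attempt will be the splitting $\int_0^{t_1}=\int_0^{t_1-(t_2-t_1)}+\int_{t_1-(t_2-t_1)}^{t_1}$ (when $t_1>t_2-t_1$). On the near piece we use the crude bound on each $\cS_\al$ separately; on the far piece we use the mean value bound. In both pieces the power counting gives $(t_2-t_1)^{\al\nu/2}$ at worst. Collecting the four estimates yields the stated $\beta$.
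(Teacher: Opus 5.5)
Your decomposition matches the paper's (its $J_1,\dots,J_5$ only separate $B$ from $f$), but two of your steps fail.

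\textbf{The initial term $I_1$.} Integrating your own bound gives $\|(E_\al(t_2)-E_\al(t_1))A^{-\theta}\|\le C(t_2-t_1)^{\al\theta}$. Hence
\[
\|I_1\|_\mu\le C(t_2-t_1)^{\al\theta}\|u_0\|_{\mu+2\theta}.
\]
The rate $\al\nu/2$ (that is, $\theta=\nu/2$) therefore costs $\nu$ derivatives. From $u_0\in\dot H^\nu$ you obtain it in $\dot H^0$, not in $\dot H^\nu$; there is no ``remaining smoothness'' left to borrow. In $\dot H^\nu$ there is no rate at all. Since $x\mapsto E_\al(-x)$ decreases from $1$, we have $1-E_\al(-\lambda_jt^\al)\ge c_\al:=1-E_\al(-1)>0$ whenever $\lambda_jt^\al\ge1$, so
\[
\|(E_\al(t)-I)u_0\|_\nu^2\ \ge\ c_\al^2\sum_{\lambda_j\ge t^{-\al}}\lambda_j^\nu|\langle u_0,e_j\rangle|^2 .
\]
The right side is a tail of the series for $\|u_0\|_\nu^2$, and for suitable $u_0\in\dot H^\nu$ it decays more slowly than any power of $t$.

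The paper's proof takes exactly this step for $J_1$ by citing Lemma~\ref{lem:MittagLeffler_diff}. That lemma fails for the same reason: test it on $\chi=e_j$ with $t_1=\lambda_j^{-1/\al}$ and $t_2=2t_1$. The left side equals $c\,t_1^{-\al\nu/2}$, while the right side is $Ct_1^{\al\nu/2}$. So the exponent $\al\nu/2$ needs extra regularity of the data. For example, $u_0\in\dot H^{2\nu}$ gives $\|I_1\|_\nu\le C(t_2-t_1)^{\al\nu/2}\|u_0\|_{2\nu}$.

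\textbf{The convolution terms $I_2$ and $I_3$.} You use $\|F(s)\|_{\nu-1}\lesssim 1+s^{-\al(\nu+1)/2}$ and $\|\cS_\al(r)\|_{\cL(\dot H^{\nu-1},\dot H^\nu)}\lesssim r^{\al/2-1}$. With these, the Beta-function computation you propose for $t_1$ near $0$ is exactly
\[
\int_0^{t}(t-s)^{\frac{\al}{2}-1}s^{-\frac{\al(\nu+1)}{2}}\,ds=B\Bigl(\frac{\al}{2},\,1-\frac{\al(\nu+1)}{2}\Bigr)\,t^{-\frac{\al\nu}{2}} .
\]
The condition $\al(\nu+1)<1$ only makes the constant finite; the power stays negative. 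For $t_1$ away from $0$, the factor $t_1^{-\al(\nu+1)/2}$ is not uniform in $t_1$. In your near/far splitting of $I_3$, when $t_1$ is slightly larger than $h=t_2-t_1$, your bound for the near piece is again of order $h^{-\al\nu/2}$.

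So the claim that ``the power counting gives $(t_2-t_1)^{\al\nu/2}$ at worst'' is unsupported. Each factor $\|u\|_{\nu+1}$ costs $s^{-\al(\nu+1)/2}$ in $\cW_{T^*}$, while one derivative of smoothing returns only $(t-s)^{\al/2}$. A sharper bilinear bound would give a positive power: put only $1+\eps$ derivatives on the $L^\infty$ factor and interpolate between the two components of the $\cW_{T^*}$ norm. But that power is only of order $\al\nu^2/2$, which is below $\al\nu/2$.

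The paper's proof offers no working alternative here either. It treats $J_2$ and $J_4$ with the uniform-in-$s$ bound of Lemma~\ref{lem:Salpha_basic}, which cannot hold because $\|\cS_\al(r)\|_{\cL(L^2)}\sim r^{\al-1}/\Ga(\al)$ blows up as $r\to0$. Its bound for $J_3$ also carries the non-uniform factor $t_1^{-\al(\nu+1)}$.
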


\begin{thm}[Non-central limit theorem]\label{thm:nclt}
Let $(\xi_n)_{n\ge 1}$ be a stationary centered Gaussian sequence with covariance $\rho(n)=\E[\xi_0\xi_n]\sim n^{2H-2}$ as $n\to\infty$. For $k\ge1$, let $H_k$ be the Hermite polynomial of degree $k$ normalized so that $\E[H_k(\xi)^2]=k!$. Define
\[
S_N(t) = \frac{1}{N^H} \sum_{n=1}^{[Nt]} H_k(\xi_n), \qquad t\ge 0.
\]
Then, as $N\to\infty$, the finite-dimensional distributions of $S_N(t)$ converge to those of the Hermite process $Z_H^k(t)$. Moreover, let $u^N$ be the mild solution of \eqref{eq:main} with $\dot{S}_N$ in place of $\dot{Z}_H^k$. Under the assumptions of Theorem~\ref{thm:exist}, we have
\[
u^N \xrightarrow{\text{law}} u \quad \text{in } C([0,T];\dot{H}^\nu),
\]
where $u$ is the mild solution of the original problem.
\end{thm}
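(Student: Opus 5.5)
The proof has two parts: a classical non-central limit theorem for $S_N$, and a continuous-mapping argument that transfers it to $u^N$.

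\textbf{First part.} This is the Dobrushin--Major--Taqqu non-central limit theorem. I will invoke it in the form used by Maejima and Tudor \cite{maejima2007wiener}, after one normalisation check. The Hermite rank of $H_k$ is $k$, and since $\rho(n)^k\sim n^{k(2H-2)}$, long-range dependence with the right scaling requires the effective exponent $k(2H_0-2)=2H-2$. This is exactly the parametrisation in which $Z_H^k$ is defined, with $H_0=1+(H-1)/k$ and $H_0>1-1/(2k)$. I will also check the variance. By the diagram formula, $\E[S_N(t)S_N(s)]=N^{-2H}k!\sum_{n,m}\rho(n-m)^k$. Riemann-sum asymptotics make this converge, up to the constant absorbed in the normalisation, to $\tfrac12(t^{2H}+s^{2H}-|t-s|^{2H})$, which is \eqref{eq:hermite_cov}. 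Convergence of finite-dimensional distributions then follows from the spectral representation of $\xi_n$ as a Wiener integral and the Wiener--Itô representation of $H_k(\xi_n)$ as a $k$-fold integral: the kernels of $S_N(t)$ converge in $L^2$ to the kernel of $Z_H^k(t)$, so the chaos isometry gives $L^2$ convergence on a suitable common probability space.

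\textbf{Second part.} Let $\Phi$ be the solution map $\zeta\mapsto u$ that solves \eqref{eq:mild} with the stochastic convolution replaced by a given path $\zeta\in C([0,T];\dot H^\nu)$. The Banach fixed point argument behind Theorem~\ref{thm:exist} is pathwise in $\zeta$, so $\Phi$ is locally Lipschitz from $C([0,T];\dot H^\nu)$ into the weighted space, on the random existence interval. It therefore suffices to show that $Z^N(t)=\int_0^t\cS_\al(t-s)\,dS_N(s)$, a finite sum $N^{-H}\sum_n \cS_\al(t-n/N)H_k(\xi_n)$, converges in law to $Z_k$ in $C([0,T];\dot H^\nu)$. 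To make sense of this in $\dot H^\nu$, I will read the noise as $Q^{1/2}$-valued, with $\sum_j$ over eigenmodes as in Theorem~\ref{thm:Zk}. Convergence in law then reduces to two facts.

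First, finite-dimensional convergence of $Z^N$. Since $Z^N(t)$ is a linear functional of $S_N$, this follows from the $L^2$ kernel convergence above together with deterministic integrand convergence in $\cH$. Second, tightness. Here I will redo the computation of Theorem~\ref{thm:Zk} for $Z^N$. Hypercontractivity in the $k$-th chaos (Theorem~\ref{thm:hyper} applied to the discrete kernels) and the covariance bound $\sum\rho^k\lesssim$ the continuous kernel give a bound $\|Z^N(t_2)-Z^N(t_1)\|_{L^p(\dot H^\nu)}\le C(t_2-t_1)^\gamma$ uniformly in $N$. Choosing $p$ large so that $p\gamma>1$, Kolmogorov--Chentsov yields tightness in $C([0,T];\dot H^\nu)$. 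Continuity of $\Phi$ and the continuous mapping theorem then give $u^N\to u$ in law.

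\textbf{Main obstacle.} The hardest step is the uniform-in-$N$ Hölder bound. The discrete sum has the singular kernel $\cS_\al(t-n/N)$ evaluated at grid points near $s=t$, so the estimate $\int\int\|\cS_\al(t-r)\cS_\al(t-s)\|_{HS}|r-s|^{2H-2}\,dr\,ds$ must be mimicked for the sums. I would first smooth by linear interpolation of $S_N$ and use the Stieltjes form $\int\cS_\al\,dS_N$. Failing that, I would bound the discrete double sum by the continuous integral via monotonicity of $t^{\al(1-\nu)/2-1}$ and $n^{2H-2}$. A secondary issue is that $T^*$ is random and local, so the convergence is only on $[0,T^*]$. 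I would handle this by localisation: stop at $\tau_R$, the time at which the weighted norm reaches $R$, and let $R\to\infty$.
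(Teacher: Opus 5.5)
Your route is the paper's own: the classical NCLT for $S_N$, f.d.d.\ convergence plus tightness of the discrete convolution, then the continuous mapping theorem through the solution map. It breaks at the same step the paper passes over, namely the claim that the fixed point of Section~\ref{sec:existence} gives a locally Lipschitz map $\zeta\mapsto u$ on $C([0,T];\dot H^\nu)$.

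Those contraction estimates control $B(u)$ in $\dot H^\nu$ only through $\|u(s)\|_{\nu+1}\le s^{-\al(\nu+1)/2}\|u\|_{\cW_T}$. Since $u=E_\al(\cdot)u_0+F(u)+\zeta$ contains $\zeta$ itself, they need $\sup_s s^{\al(\nu+1)/2}\|\zeta(s)\|_{\nu+1}<\infty$, and they give Lipschitz dependence only in that norm. For a path that is merely in $C([0,T];\dot H^\nu)$ the fixed point cannot even be set up, and its output, which contains $\zeta$, could not lie in the weighted space.

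Nor can you upgrade the convergence of $Z^N$ to that norm. For the cylindrical noise behind Lemma~\ref{lem:HS}, rescaling $s=\la_j^{-1/\al}\tau$ shows that the $j$-th mode of $Z_k(t)$ has variance $\asymp\la_j^{-2+(2-2H)/\al}$. Hence $\E\|Z_k(t)\|_\rho^2<\infty$ exactly when $\al(1-\rho)+2H>2$, and at $\rho=\nu+1$ this would require $2H>2+\al\nu$. This also shows Lemma~\ref{lem:Zk_estimate} is false.

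The missing idea is a fixed point for $v=u-\zeta$ that uses only $\zeta\in C([0,T];\dot H^\nu)$. In $d=2$, duality with $\dot H^\nu\hookrightarrow L^{2/(1-\nu)}$ and $H^{1-2\nu}\hookrightarrow L^{1/\nu}$ gives $\|B(w_1,w_2)\|_{2\nu-2}\le C\|w_1\|_\nu\|w_2\|_\nu$. Moreover, $\|\cS_\al(r)\|_{\cL(\dot H^{2\nu-2},\dot H^\nu)}\le Cr^{-1+\al\nu/2}$ is integrable because $\nu>0$. This yields a contraction in $C([0,T];\dot H^\nu)$ on an interval depending only on the sizes of $u_0$ and $\zeta$. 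The resulting solution map is locally Lipschitz in exactly the topology in which you prove $Z^N\Rightarrow Z_k$.

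There are two further gaps. First, existence is only on a data-dependent interval, so your localisation at $\tau_R$ recovers a fixed horizon $T$ only if $\Prob(\tau_R\le T)\to0$ as $R\to\infty$, i.e.\ only if $u$ does not blow up before $T$. Neither your argument nor the paper (whose ``standard patching'' has the same defect) supplies this. As it stands you get convergence of stopped processes, not the stated convergence in $C([0,T];\dot H^\nu)$. Second, Kolmogorov--Chentsov in the infinite-dimensional space $\dot H^\nu$ needs tight marginals. Your $L^2(\Omega;\dot H^\nu)$ f.d.d.\ convergence provides them, but you should say so.

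On the positive side, two of your departures from the paper are genuine corrections and should be stated as such. First, your normalisation check shows that with $\rho(n)\sim n^{2H-2}$ and normalisation $N^{H}$ one gets $S_N\to0$ in $L^2$ for $k\ge2$. The hypothesis must therefore read $\rho(n)\sim n^{(2H-2)/k}$. Second, since $\cS_\al(r)e_j\sim r^{\al-1}e_j/\Ga(\al)$ as $r\downarrow0$, the step-function sum $N^{-H}\sum_n\cS_\al(t-n/N)H_k(\xi_n)$ blows up as $t\downarrow n/N$ and is not in $C([0,T];\dot H^\nu)$. So linear interpolation is necessary, and your fallback comparison for the step version cannot work. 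With interpolation, the bound $N^{2-2H}|\rho(n-m)|^k\le C|u-v|^{2H-2}$ for $u\in((n-1)/N,n/N]$, $v\in((m-1)/N,m/N]$ reduces the uniform estimate directly to the computation of Lemma~\ref{lem:Zk_L2}.
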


\subsection{Why additive noise?}

We restrict to additive noise (the coefficient of $\dot{Z}_H^k$ is $1$) for a fundamental reason. 
For Hermite processes of order $k\ge2$, stochastic integration with random integrands requires 
Malliavin calculus and control of the Malliavin derivative in tensor-product Hilbert spaces 
\cite{nualart2006, nourdin2012normal}. While the theoretical framework for multiplicative noise 
exists --- for the Rosenblatt process ($k=2$) see \v{C}oupek, Duncan, and Pasik-Duncan 
\cite{CoupekDuncanPasikDuncan2022}, and for general Hermite processes ($k\ge3$) see 
\v{C}oupek, K\v{r}\'i\v{z}, and Svoboda \cite{CoupekKrizSvoboda2025} --- the analysis of the 
nonlinear terms in \eqref{eq:main} in the multiplicative setting is technically involved and 
requires a separate detailed study. 

We therefore postpone the multiplicative case to a forthcoming article, where we will build upon 
the Malliavin calculus developed in the works cited above. By keeping the noise additive here, 
we work with deterministic integrands and the stochastic integral reduces to a classical multiple 
Wiener-It\^o integral. This allows us to focus on the interaction between the fractional derivative, 
the nonlinearity, and the non-Gaussian noise, without additional Malliavin calculus complications.

\subsection{Outline of the paper}

The paper is organized as follows. Section~\ref{sec:prelim} collects the necessary background. Section~\ref{sec:wiener} constructs the Wiener integral with respect to $Z_H^k$ and proves Theorem~\ref{thm:hyper}. Section~\ref{sec:convolution} studies the stochastic convolution $Z_k$ and proves Theorem~\ref{thm:Zk}. Section~\ref{sec:existence} proves Theorem~\ref{thm:exist} via a fixed-point argument. Section~\ref{sec:holder} proves Theorem~\ref{thm:holder}. Section~\ref{sec:asymptotic} proves Theorem~\ref{thm:nclt}. Section~\ref{sec:conclusion} concludes with open problems.

\section*{Notation}
\addcontentsline{toc}{section}{Notation}
\label{sec:notation}

\begin{itemize}
\item $k\ge1$ is an integer (Hermite order), $\al\in(0,1)$ (fractional order), $H\in(1/2,1)$ (Hurst parameter)
\item $\Omega$ is a complete probability space, $\cF$ is a $\sigma$-algebra, $\Prob$ is a probability measure
\item $\E$ denotes expectation
\item $D\subset\R^2$ is a bounded domain with smooth boundary $\partial D$
\item $L^2(D)$ is the space of square-integrable functions, $L^2_\sigma(D)$ is the space of divergence-free vector fields
\item $P_H$ is the Helmholtz-Hodge projection
\item $A$ is the Stokes operator, $\lambda_j$ are its eigenvalues, $e_j$ are its eigenvectors
\item $\dot{H}^\nu = D(A^{\nu/2})$ is the fractional Sobolev space, $\|\cdot\|_\nu$ is its norm
\item $\Ga$ is the Gamma function
\item ${}^C D_t^\al$ is the Caputo fractional derivative of order $\al$
\item $\xi_\al(\theta)$ is the Mainardi-Wright function
\item $T(t)=e^{-tA}$ is the analytic semigroup generated by $-A$
\item $E_\al(t)$ and $E_{\al,\al}(t)$ are Mittag-Leffler operators
\item $\cS_\al(t)=t^{\al-1}E_{\al,\al}(t)$ is the operator appearing in the stochastic convolution
\item $Z_H^k(t)$ is the Hermite process of order $k$ and Hurst parameter $H$
\item $I_k(f)$ is the $k$-fold Wiener-It\^o integral of $f$
\item $\cH$ is the covariance Hilbert space of $Z_H^k$
\item $C$ denotes a generic positive constant that may change from line to line
\item $C_{p,k}$ denotes a constant depending on $p$ and $k$
\item $L^p(\Omega;B)$ denotes the space of $B$-valued random variables with finite $p$-th moment
\item $Z_k(t) = \int_0^t \cS_\al(t-s) dZ_H^k(s)$ is the stochastic convolution
\item $\cW_T$ is the weighted space of mild solutions
\end{itemize}

\begin{rmk}[Convention on constants]
Throughout this paper, $C_{p,k}$ denotes a generic constant of the form $(p-1)^{k/2}C_0$, where $C_0$ may depend on $\al$, $H$, $\nu$, $T$, $L_f$, and the domain $D$, but is independent of $p$, $k$, and the specific functions involved. Its value may change from line to line.
\end{rmk}

\section{Preliminaries}
\label{sec:prelim}

\subsection{Stokes operator and fractional Sobolev spaces}
\label{subsec:stokes}

Let $D\subset\R^2$ be a bounded domain with smooth boundary $\partial D$. The space of divergence-free vector fields is
\[
L^2_\sigma(D) = \bigl\{ u\in L^2(D)^2 : \nabla\cdot u = 0,\ u\cdot n = 0 \text{ on } \partial D \bigr\}.
\]
The Helmholtz-Hodge projection $P_H : L^2(D)^2 \to L^2_\sigma(D)$ is the orthogonal projection onto this subspace.

The Stokes operator $A$ is defined by $Au = -P_H\Delta u$ with domain
\[
D(A) = H^2(D)^2 \cap H^1_0(D)^2 \cap L^2_\sigma(D).
\]
It is well known that $A$ is a positive self-adjoint operator with compact resolvent. Its eigenvalues $\{\lambda_j\}_{j\ge1}$ satisfy
\[
0 < \lambda_1 \le \lambda_2 \le \cdots \le \lambda_j \le \cdots, \qquad \lim_{j\to\infty}\lambda_j = \infty,
\]
and the corresponding eigenvectors $\{e_j\}_{j\ge1}$ form an orthonormal basis of $L^2_\sigma(D)$. Moreover, Weyl's asymptotics gives
\[
\lambda_j \sim C_2 j \quad \text{as } j\to\infty,
\]
where $C_2$ depends only on the volume of $D$ \cite{temam1977navier}.

For $\nu \ge 0$, the fractional power $A^{\nu/2}$ is defined via the spectral decomposition:
\[
A^{\nu/2} u = \sum_{j=1}^\infty \lambda_j^{\nu/2} \langle u, e_j\rangle e_j.
\]
The fractional Sobolev space $\dot{H}^\nu$ is then defined as
\[
\dot{H}^\nu := D(A^{\nu/2}) = \left\{ u\in L^2_\sigma(D) : \|u\|_{\dot{H}^\nu}^2 := \sum_{j=1}^\infty \lambda_j^\nu |\langle u, e_j\rangle|^2 < \infty \right\}.
\]
For $\nu=0$, we have $\dot{H}^0 = L^2_\sigma(D)$. For $\nu=1$, the norm $\|\cdot\|_{\dot{H}^1}$ is equivalent to the usual $H^1$ norm on divergence-free fields. For $\nu<0$, $\dot{H}^\nu$ is defined as the dual space of $\dot{H}^{-\nu}$. The norm in $\dot{H}^\nu$ will be denoted by $\|\cdot\|_\nu$, or simply $\|\cdot\|$ when $\nu=0$.

The following Sobolev embeddings are classical \cite{temam1977navier}.

\begin{prop}\label{prop:sobolev}
In dimension $d=2$, we have:
\begin{itemize}
    \item If $\nu > 1$, then $\dot{H}^\nu \hookrightarrow L^\infty(D)^2$ continuously.
    \item If $\nu > 0$, then $\dot{H}^{\nu+1} \hookrightarrow W^{1,\infty}(D)^2$ continuously.
    \item The embedding $\dot{H}^{\nu} \hookrightarrow \dot{H}^{\mu}$ is compact for $\nu > \mu$.
\end{itemize}
\end{prop}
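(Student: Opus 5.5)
The plan is to reduce each bullet either to the classical Sobolev embeddings on the smooth bounded domain $D\subset\R^2$ or to the spectral structure of $A$. The first step is to identify the spectral spaces $\dot{H}^s=D(A^{s/2})$ with closed subspaces of the standard Sobolev spaces $H^s(D)^2$, with equivalent norms. For $s=0$ this is the definition. For $s=2$ it follows from elliptic regularity for the Stokes system (Cattabriga / Agmon--Douglis--Nirenberg), which gives $\|u\|_{H^2}\le C\|Au\|$ on $D(A)=H^2\cap H^1_0\cap L^2_\sigma$. For intermediate $0\le s\le 2$, complex interpolation $[\dot{H}^0,\dot{H}^2]_{s/2}=D(A^{s/2})$ for the self-adjoint positive operator $A$ gives a continuous inclusion $\dot{H}^s\hookrightarrow H^s(D)^2$, since interpolation of the continuous inclusions $\dot H^0\hookrightarrow L^2$ and $\dot H^2\hookrightarrow H^2$ yields one into $[L^2,H^2]_{s/2}=H^s$. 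For $s>2$ I would iterate using higher-order Stokes regularity on the smooth domain.

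\textbf{First bullet.} Given $\nu>1=d/2$, compose $\dot{H}^\nu\hookrightarrow H^\nu(D)^2$ with the classical embedding $H^\nu(D)\hookrightarrow C(\bar D)\subset L^\infty(D)$, valid for $\nu>d/2$ on Lipschitz domains via an extension operator.

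\textbf{Third bullet.} Here I would argue purely spectrally. For $\nu>\mu$ the inclusion $\dot H^\nu\hookrightarrow\dot H^\mu$ is unitarily equivalent, via $A^{\nu/2}$ and $A^{\mu/2}$, to the diagonal operator $e_j\mapsto\lambda_j^{(\mu-\nu)/2}e_j$ on $L^2_\sigma$. Since $\lambda_j\to\infty$, these multipliers tend to $0$. The operator is therefore the operator-norm limit of its finite-rank truncations, and hence compact. This also covers negative indices, by duality.

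\textbf{Second bullet: main obstacle.} This is the step I expect to be the main obstacle, because as written it does not follow for all $\nu>0$. One has $u\in W^{1,\infty}$ iff $u,\nabla u\in L^\infty$. Moreover, $u\in\dot H^{\nu+1}$ only gives $\nabla u\in H^{\nu}(D)$, and $H^\nu\hookrightarrow L^\infty$ in dimension two requires $\nu>1$. For $0<\nu\le1$ the embedding fails: a standard counterexample is a divergence-free field $u=\nabla^\perp\psi$ with stream function $\psi(x)=\chi(x)\,x_1\log\log(1/|x|)$, localized near an interior point. It lies in $H^{2}$ but has unbounded gradient. I would therefore prove the bullet under the corrected hypothesis $\nu>1$, i.e. $\dot H^{\nu+1}\hookrightarrow W^{1,\infty}$, by applying the first bullet to $u$ and to $\partial_i u\in H^\nu$.

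For $0<\nu<1$, the range actually used in Theorem~\ref{thm:exist}, the correct substitute is $\dot H^{\nu+1}\hookrightarrow W^{1,q}$ with $\frac1q=\frac12-\frac{\nu}2$, together with $\dot H^{\nu+1}\hookrightarrow L^\infty$. These suffice for bilinear estimates of $B(u,v)$ via Hölder's inequality. I would flag this so that the later fixed-point argument invokes this weaker statement rather than the $W^{1,\infty}$ bound.
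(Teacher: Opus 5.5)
The paper gives no proof of this proposition; it calls the three embeddings classical and cites Temam. Your argument for the first and third bullets is correct and supplies what the citation leaves implicit:
\begin{itemize}
\item identify $\dot H^s$ with a closed subspace of $H^s(D)^2$ via Stokes regularity and interpolation, then use $H^\nu(D)\hookrightarrow C(\bar D)$ for $\nu>1$;
\item prove compactness spectrally from $\lambda_j^{(\mu-\nu)/2}\to0$.
\end{itemize}
Your diagnosis of the second bullet is also right, and it is an error in the statement, not in your approach. In two dimensions $\dot H^{\nu+1}\hookrightarrow W^{1,\infty}$ requires $\nu>1$, and your proof for $\nu>1$ is fine. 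The damage to the paper is limited. The proof of Lemma~\ref{lem:B_estimates} only uses $\dot H^{\nu+1}\hookrightarrow L^\infty$, which is the first bullet at index $\nu+1>1$, and Lemma~\ref{lem:ZkN_convergence} only uses the third bullet.

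The concrete flaw is your counterexample: it is one derivative too low. It also carries real weight in your argument. The observation that $\nabla u\in H^\nu$ with $H^\nu\not\hookrightarrow L^\infty$ only shows that the naive proof fails. It does not exclude that the divergence-free constraint rescues the embedding.

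With $\psi=\chi\,x_1\log\log(1/|x|)$ you get $\psi\in H^2$ and $u=\nabla^\perp\psi\in H^1$. But $u$ has the component $\pm\partial_1\psi=\pm\bigl(\log\log(1/|x|)+O(1)\bigr)$, so $u$ itself is unbounded. This is the standard witness for $H^1\not\hookrightarrow L^\infty$. By the first bullet, such a $u$ lies in no $\dot H^{1+\nu}$ with $\nu>0$, so it says nothing about the second bullet.

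Raise the homogeneity by one and take $\psi=\chi(x)\,|x|^2\log\log(1/|x|)$, with $\chi$ supported in a small ball around an interior point placed at the origin. Then:
\begin{itemize}
\item every third derivative of $\psi$ is $O\bigl(|x|^{-1}(\log(1/|x|))^{-1}\bigr)$, which is square integrable in the plane;
\item hence $u=\nabla^\perp\psi$ is a compactly supported divergence-free $H^2$ field, so $u\in D(A)=\dot H^2\subset\dot H^{1+\nu}$ for every $\nu\in(0,1]$;
\item yet $\nabla u$ contains the entry $\pm\partial_1^2\psi$, and $\partial_1^2\psi=2\log\log(1/|x|)+O(1)$ is unbounded near the origin.
\end{itemize}

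A smaller caveat concerns your closing remark. H\"older with $L^\infty\times L^q$ controls $(u\cdot\nabla)v$ only in $L^q$, whereas Lemma~\ref{lem:B_estimates} needs a $\dot H^\nu$ bound. For that you want three ingredients:
\begin{itemize}
\item the product estimate $\|fg\|_{H^\nu}\le C\|f\|_{H^{1+\nu}}\|g\|_{H^\nu}$, valid in two dimensions for $\nu>0$;
\item boundedness of $P_H$ on $H^\nu(D)$;
\item the identification $\dot H^\nu=H^\nu(D)^2\cap L^2_\sigma(D)$ for $0\le\nu<1/2$.
\end{itemize}
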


\subsection{Caputo derivative and Mittag-Leffler operators}
\label{subsec:caputo}

The Caputo fractional derivative of order $\al\in(0,1)$ is defined for a sufficiently smooth function $u:[0,T]\to\dot{H}^\nu$ by
\[
{}^C D_t^\al u(t) = \frac{1}{\Ga(1-\al)} \int_0^t (t-s)^{-\al} \frac{du}{ds}(s)\,ds.
\]
This definition requires $u$ to be absolutely continuous, which will be satisfied for our solutions.

The Mittag-Leffler operators are central to the representation of solutions. They are defined using the Mainardi-Wright function
\[
\xi_\al(\theta) = \sum_{n=0}^\infty \frac{(-1)^n \theta^n}{n! \Ga(1-\al(1+n))}, \qquad \theta \ge 0.
\]
This function satisfies $\xi_\al(\theta) \ge 0$ and the moment formula
\begin{equation}
\label{eq:moment}
\int_0^\infty \theta^\rho \xi_\al(\theta)\,d\theta = \frac{\Ga(1+\rho)}{\Ga(1+\al\rho)}, \qquad \rho > -1.
\end{equation}

Let $T(t)=e^{-tA}$ be the analytic semigroup generated by $-A$. The Mittag-Leffler operators are defined by
\begin{align}
E_\al(t) &:= \int_0^\infty \xi_\al(\theta) T(t^\al\theta)\,d\theta, \label{eq:Ealpha}\\
E_{\al,\al}(t) &:= \int_0^\infty \al\theta\, \xi_\al(\theta) T(t^\al\theta)\,d\theta. \label{eq:Ealphaalpha}
\end{align}

The following estimates are proved in \cite[Lemmas 2.1, 2.2]{zou2018stochastic}.

\begin{lemma}\label{lem:MittagLeffler_estimates}
For any $t>0$, $E_\al(t)$ and $E_{\al,\al}(t)$ are bounded linear operators on $\dot{H}^\nu$ for all $\nu\ge0$. Moreover, there exists $C>0$ such that for all $0\le\nu<2$,
\[
\|E_\al(t)\chi\|_\nu \le C t^{-\frac{\al\nu}{2}} \|\chi\|, \qquad
\|E_{\al,\al}(t)\chi\|_\nu \le C t^{-\frac{\al\nu}{2}} \|\chi\|.
\]
\end{lemma}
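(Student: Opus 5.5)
The plan is to work spectrally and reduce both estimates to scalar bounds for the subordination integrals \eqref{eq:Ealpha}--\eqref{eq:Ealphaalpha}. Since $A$ is self-adjoint with eigenpairs $(\la_j,e_j)$, the semigroup acts diagonally, $T(s)e_j=e^{-s\la_j}e_j$. Therefore
\[
E_\al(t)e_j=\Bigl(\int_0^\infty \xi_\al(\theta)e^{-t^\al\theta\la_j}\,d\theta\Bigr)e_j,
\]
and similarly for $E_{\al,\al}(t)$ with the weight $\al\theta\xi_\al(\theta)$. For $\chi=\sum_j\chi_je_j\in L^2_\sigma(D)$ this gives
\[
\|E_\al(t)\chi\|_\nu^2=\sum_j \la_j^{\nu}\,|m_j(t)|^2\,|\chi_j|^2, \qquad m_j(t)=\int_0^\infty \xi_\al(\theta)e^{-t^\al\theta\la_j}\,d\theta .
\]
It therefore suffices to prove the uniform scalar bound $\la^{\nu/2}|m(t,\la)|\le C t^{-\al\nu/2}$ for all $\la>0$ and $t>0$, and likewise for the $E_{\al,\al}$ multiplier. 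Boundedness on every $\dot H^\nu$ comes for free: $\xi_\al\ge0$ and \eqref{eq:moment} with $\rho=0$ give $0\le m_j(t)\le1$, and the $E_{\al,\al}$ multiplier is bounded by $\al\,\Ga(2)/\Ga(1+\al)$ using $\rho=1$. Each multiplier thus commutes with $A^{\nu/2}$ and has norm at most a constant on any $\dot H^\nu$.

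For the smoothing bound I will move the fractional power inside the $\theta$-integral. The elementary inequality $x^{\nu/2}e^{-x}\le C_\nu$ for $x\ge0$ (with $C_\nu=(\nu/2e)^{\nu/2}$), applied with $x=t^\al\theta\la$, gives
\[
\la^{\nu/2}e^{-t^\al\theta\la}\le C_\nu\, t^{-\al\nu/2}\theta^{-\nu/2}.
\]
Integrating this against $\xi_\al(\theta)$, the $E_\al$ multiplier is bounded by
\[
C_\nu t^{-\al\nu/2}\int_0^\infty\theta^{-\nu/2}\xi_\al(\theta)\,d\theta=C_\nu t^{-\al\nu/2}\frac{\Ga(1-\nu/2)}{\Ga(1-\al\nu/2)}
\]
by \eqref{eq:moment} with $\rho=-\nu/2$. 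That choice is admissible exactly when $\nu<2$, which is where the restriction $0\le\nu<2$ comes from. For $E_{\al,\al}$ the same computation produces $\rho=1-\nu/2>0$, with constant $\al C_\nu\Ga(2-\nu/2)/\Ga(1+\al(1-\nu/2))$. Summing over $j$ against $|\chi_j|^2$ then yields both inequalities, with the constant taken as the maximum of the two.

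The main obstacle is the $E_\al$ bound, where the negative moment $\rho=-\nu/2$ must lie above $-1$; this is the only point where $\nu<2$ is used. One must also justify that \eqref{eq:moment} holds for negative $\rho$, which is standard for the Mainardi--Wright density but should be cited. I would also remark that $\nu=0$ is trivial, and that interchanging the sum over $j$ with the $\theta$-integral is legitimate by Tonelli, since every integrand is nonnegative.
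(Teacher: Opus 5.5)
Your proposal is correct and follows essentially the paper's route: the paper gives no proof of its own but defers to Lemmas 2.1--2.2 of Zou, Lv and Wu. That argument, also sketched for Lemma~\ref{lem:smoothing}, inserts the analytic-semigroup bound $\|A^{\nu/2}T(s)\|\le C s^{-\nu/2}$ into the subordination integrals and evaluates $\int_0^\infty \theta^{-\nu/2}\xi_\al(\theta)\,d\theta$ and $\int_0^\infty \theta^{1-\nu/2}\xi_\al(\theta)\,d\theta$ via \eqref{eq:moment}; your scalar inequality $x^{\nu/2}e^{-x}\le C_\nu$ is just the spectral form of that semigroup bound, and the resulting $\nu$-dependent constant (blowing up like $\Gamma(1-\nu/2)$ as $\nu\to2$) is consistent with the paper's convention on constants.
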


\begin{lemma}\label{lem:MittagLeffler_diff}
For $0<t_1<t_2\le T$ and $0\le\nu<2$, there exists $C>0$ such that
\[
\|(E_\al(t_2)-E_\al(t_1))\chi\|_\nu \le C (t_2-t_1)^{\frac{\al\nu}{2}} \|\chi\|.
\]
For $t_1=0$, the same estimate holds provided $\chi\in\dot{H}^\nu$ (not merely $\dot{H}^0$). The same estimate holds for $E_{\al,\al}$.
\end{lemma}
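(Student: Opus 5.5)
The plan is to reduce to a scalar estimate by diagonalizing in the eigenbasis $\{e_j\}$ of $A$. By \eqref{eq:Ealpha} and \eqref{eq:moment}, $E_\al(t)e_j = g_\la(t)e_j$ with $g_\la(t)=E_\al(-\la t^\al)$, the scalar Mittag-Leffler function evaluated at $\la=\la_j$. Similarly, \eqref{eq:Ealphaalpha} gives $E_{\al,\al}(t)e_j=E_{\al,\al}(-\la_j t^\al)e_j$. Writing $\chi=\sum_j\chi_j e_j$, we have
\[
\|(E_\al(t_2)-E_\al(t_1))\chi\|_\nu^2=\sum_j \la_j^{\nu}\,|g_{\la_j}(t_2)-g_{\la_j}(t_1)|^2\,|\chi_j|^2 .
\]
So it suffices to prove the multiplier bound $\sup_{\la\ge\la_1}\la^{\nu/2}|g_\la(t_2)-g_\la(t_1)|\le C(t_2-t_1)^{\al\nu/2}$, uniformly in $0<t_1<t_2\le T$.

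For the scalar bound I would use the derivative identity $\frac{d}{dt}E_\al(-\la t^\al)=-\la t^{\al-1}E_{\al,\al}(-\la t^\al)$ together with the classical decay $|E_{\al,\al}(-x)|\le C(1+x)^{-2}$ for $x\ge0$. With $h=t_2-t_1$ and the substitution $r=\la s^\al$ this gives
\[
|g_\la(t_2)-g_\la(t_1)|\le\int_{t_1}^{t_2}\frac{C\la s^{\al-1}}{(1+\la s^\al)^2}\,ds=\frac{C}{\al}\int_{\la t_1^\al}^{\la t_2^\al}\frac{dr}{(1+r)^2}\le C\min\{1,\la(t_2^\al-t_1^\al)\}.
\]
Subadditivity of $s\mapsto s^\al$ gives $t_2^\al-t_1^\al\le h^\al$. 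In the low-frequency regime $\la h^\al\le1$ this yields
\[
\la^{\nu/2}\cdot\la h^\al=(\la h^\al)^{\nu/2}(\la h^\al)^{1-\nu/2}h^{\al\nu/2}\le h^{\al\nu/2},
\]
using $\nu<2$, which is the desired bound. The case of $E_{\al,\al}$ is handled the same way, using the integral representation \eqref{eq:Ealphaalpha} and the corresponding derivative formula for $E_{\al,\al}(-\la t^\al)$.

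The main obstacle is the high-frequency regime $\la h^\al\ge1$. There the crude bound $\min\{1,\cdot\}=1$ leaves the factor $\la^{\nu/2}$, which is unbounded in $\la$. One must therefore extract additional decay from the integrand $(1+\la s^\al)^{-2}$ on $[t_1,t_2]$. I would first try to bound the integral by $(1+\la t_1^\al)^{-1}$ and interpolate, giving
\[
\la^{\nu/2}|g_\la(t_2)-g_\la(t_1)|\le C\la^{\nu/2}(\la t_1^\al)^{-\nu/2}\min\{1,\la h^\al\}^{1-\nu/2}.
\]
I expect this step to require care. Uniformity as $t_1\to0$ with $\chi$ measured only in $\|\cdot\|$ is exactly where the argument is tight: for $t_1\to0$, $E_\al(t_1)\chi\to\chi$, which need not lie in $\dot H^\nu$. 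I would therefore check this step against the argument in \cite[Lemma 2.2]{zou2018stochastic}, on which the statement relies. For the endpoint $t_1=0$, the same computation is run with $\chi\in\dot H^\nu$: the factor $\la^{\nu/2}$ is then absorbed by the data, leaving only $\sup_\la\min\{1,\la h^\al\}^{\nu/2}$-type multipliers, as the lemma's proviso indicates.
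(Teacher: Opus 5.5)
Your reduction to a scalar multiplier is exact: the $\dot H^0\to\dot H^\nu$ operator norm of $E_\al(t_2)-E_\al(t_1)$ equals $\sup_j\la_j^{\nu/2}|g_{\la_j}(t_2)-g_{\la_j}(t_1)|$. Your bound $|g_\la(t_2)-g_\la(t_1)|\le C\min\{1,\la h^\al\}$ is also correct. But the multiplier bound you aim for is false, so the high-frequency gap you flag cannot be closed by any argument, including the one in \cite[Lemma 2.2]{zou2018stochastic}; the paper gives no proof beyond that citation.

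Take $\nu\in(0,2)$, $\chi=e_j$, $h=\la_j^{-1/\al}$, $t_1=h$, $t_2=2h$, which is admissible for $j$ large. By \eqref{eq:Ealpha}, $E_\al(-x)=\int_0^\infty\xi_\al(\theta)e^{-x\theta}\,d\theta$ with $\xi_\al\ge0$, so $x\mapsto E_\al(-x)$ is strictly decreasing. Hence
\[
\|(E_\al(t_2)-E_\al(t_1))e_j\|_\nu=\la_j^{\nu/2}\bigl(E_\al(-1)-E_\al(-2^\al)\bigr)=c_\al\,h^{-\al\nu/2},\qquad c_\al>0,
\]
while the claimed bound is $Ch^{\al\nu/2}$. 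The ratio $c_\al h^{-\al\nu}/C$ is unbounded as $j\to\infty$, so no constant uniform in $0<t_1<t_2\le T$ exists. If instead $C$ may depend on $t_1,t_2$, the estimate follows trivially from Lemma~\ref{lem:MittagLeffler_estimates}.

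The $t_1=0$ clause fails too. For fixed $t_2$, $\|(E_\al(t_2)-I)e_j\|_\nu=\la_j^{\nu/2}\bigl(1-E_\al(-\la_jt_2^\al)\bigr)$ is asymptotic to $\|e_j\|_\nu$ as $j\to\infty$. It is therefore bounded neither by $Ct_2^{\al\nu/2}\|e_j\|$ nor, for small $t_2$, by $Ct_2^{\al\nu/2}\|e_j\|_\nu$.

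Two specific steps in your argument are wrong:
\begin{itemize}
\item Low frequency: the identity $\la^{\nu/2}\cdot\la h^\al=(\la h^\al)^{\nu/2}(\la h^\al)^{1-\nu/2}h^{\al\nu/2}$ is false. The correct rewriting is $\la^{\nu/2}\cdot\la h^\al=(\la h^\al)^{1+\nu/2}h^{-\al\nu/2}$, which for $\la h^\al\le1$ gives only $h^{-\al\nu/2}$.
\item High frequency: your interpolation ends at $Ct_1^{-\al\nu/2}$, with no positive power of $h$ and a blow-up as $t_1\to0$.
\end{itemize}

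What your computation does prove is the estimate with the norms interchanged:
\[
\|(E_\al(t_2)-E_\al(t_1))\chi\|\le C(t_2-t_1)^{\frac{\al\nu}{2}}\|\chi\|_\nu,\qquad 0\le t_1<t_2\le T,\quad 0\le\nu\le2.
\]
This holds because $\sup_{\la>0}\la^{-\nu/2}\min\{1,\la h^\al\}\le h^{\al\nu/2}$. Your displayed identity becomes correct with $\la^{-\nu/2}$ in place of $\la^{\nu/2}$: $\la^{-\nu/2}\cdot\la h^\al=(\la h^\al)^{1-\nu/2}h^{\al\nu/2}\le h^{\al\nu/2}$ when $\la h^\al\le1$, and $\la^{-\nu/2}\le h^{\al\nu/2}$ when $\la h^\al\ge1$. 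Your integral bound covers $t_1=0$ directly, and that is where the hypothesis $\chi\in\dot H^\nu$ genuinely enters. The same holds for $E_{\al,\al}$, since $x\mapsto E_{\al,\al}(-x)$ is bounded and Lipschitz on $[0,\infty)$.

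If you want $\dot H^\nu$ on the left and $\|\chi\|$ on the right, you must pay a singular factor in $t_1$. Write your integral exactly as $\frac{C}{\al}\frac{\la(t_2^\al-t_1^\al)}{(1+\la t_1^\al)(1+\la t_2^\al)}$ and interpolate it with weight $\nu/2$ against $C(1+\la t_1^\al)^{-1}$. This gives $\|(E_\al(t_2)-E_\al(t_1))\chi\|_\nu\le Ct_1^{-\al\nu}(t_2-t_1)^{\al\nu/2}\|\chi\|$ for $0\le\nu\le2$.

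Either corrected statement is provable along your lines; the one stated is not.
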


\begin{lemma}[Smoothing effect of $E_{\al,\al}$]\label{lem:smoothing}
For any $0\le\mu<\rho$ and $t>0$,
\[
\|E_{\al,\al}(t)\chi\|_\rho \le C t^{-\frac{\al(\rho-\mu)}{2}} \|\chi\|_\mu.
\]
In particular, for $\mu=0$ and $\rho=\nu+2$,
\[
\|\cS_\al(t)\chi\|_{\nu+2} = t^{\al-1}\|E_{\al,\al}(t)\chi\|_{\nu+2} \le C t^{-1-\frac{\al\nu}{2}} \|\chi\|.
\]
\end{lemma}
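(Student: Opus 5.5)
The plan is to work spectrally. Both $E_{\al,\al}(t)$ and $T(s)$ are diagonal in the eigenbasis $\{e_j\}$ of $A$, and the two sides of the inequality are measured in different norms ($\dot H^\mu$ and $\dot H^\rho$). So it suffices to prove, for each eigenvalue $\la_j\ge\la_1>0$, the scalar bound
\[
\la_j^{\rho/2}\,\bigl|E_{\al,\al}(t)\big|_{e_j}\bigr| \le C\, t^{-\frac{\al(\rho-\mu)}{2}}\,\la_j^{\mu/2}.
\]
Writing $\chi=\sum_j\chi_j e_j$, we then have
\[
\|E_{\al,\al}(t)\chi\|_\rho^2=\sum_j \la_j^{\rho}|m_j(t)|^2|\chi_j|^2 ,
\]
where $m_j(t)=\int_0^\infty \al\theta\,\xi_\al(\theta)e^{-t^\al\theta\la_j}\,d\theta\ge0$ by positivity of $\xi_\al$. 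Setting $\delta=\rho-\mu>0$, the claim reduces to
\[
\sup_{\la>0}\,\la^{\delta/2} m_\la(t)\le C t^{-\al\delta/2}.
\]

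Rather than a Mittag-Leffler asymptotic, I would use a direct bound via the subordination integral \eqref{eq:Ealphaalpha}. The elementary inequality $x^{\delta/2}e^{-x}\le C_\delta$ for $x\ge0$, applied with $x=t^\al\theta\la$, gives
\[
\la^{\delta/2}e^{-t^\al\theta\la}\le C_\delta\, t^{-\al\delta/2}\theta^{-\delta/2}.
\]
Hence
\[
\la^{\delta/2}m_\la(t)\le C_\delta\,\al\, t^{-\al\delta/2}\int_0^\infty \theta^{1-\delta/2}\xi_\al(\theta)\,d\theta .
\]
By the moment formula \eqref{eq:moment} with exponent $1-\delta/2$, this integral equals $\Ga(2-\delta/2)/\Ga(1+\al(1-\delta/2))$. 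It is finite precisely when $1-\delta/2>-1$, i.e. $\delta<4$.

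The main obstacle is the range of $\rho-\mu$. The statement allows any $0\le\mu<\rho$, but the argument above (and the semigroup smoothing it mimics) only gives finiteness for $\rho-\mu<4$. In fact for fixed $t$ one has $m_\la(t)\sim c\,\la^{-2}t^{-2\al}$ as $\la\to\infty$, so the smoothing gain is genuinely capped at order $4$. I would therefore prove the lemma for $\rho-\mu<4$ and note this restriction explicitly. This covers the case actually needed, $\mu=0$, $\rho=\nu+2$ with $\nu<1/2$, since then $\delta=\nu+2<4$ and the moment exponent $-\nu/2>-1$.

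The particular case then follows by multiplying by $t^{\al-1}$, using $\cS_\al(t)=t^{\al-1}E_{\al,\al}(t)$:
\[
t^{\al-1}\cdot t^{-\al(\nu+2)/2}=t^{-1-\al\nu/2},
\]
which is exactly the stated estimate.
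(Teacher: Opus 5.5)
Your argument is correct and is essentially the paper's proof written eigenvalue-by-eigenvalue: the paper also inserts the subordination formula for $E_{\al,\al}(t)$, uses the semigroup smoothing bound $\|T(s)\chi\|_\rho\le C s^{-(\rho-\mu)/2}\|\chi\|_\mu$ (which is exactly your inequality $x^{\delta/2}e^{-x}\le C_\delta$), and integrates against $\xi_\al$ with the moment formula. Your caveat is also right: that moment integral needs $1-(\rho-\mu)/2>-1$, so the lemma as stated for all $\rho>\mu$ is only justified for $\rho-\mu<4$ and is actually false for $\rho-\mu>4$, since $E_{\al,\al}(-x)\sim x^{-2}/|\Ga(-\al)|$; the cases the paper uses ($\rho-\mu\le\nu+2<4$) are unaffected.
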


\begin{proof}${}$\\
Using the spectral representation $E_{\al,\al}(t)=\int_0^\infty \al\theta\,\xi_\al(\theta)\,T(t^\al\theta)\,d\theta$ and the bound $\|T(s)\chi\|_\rho \le C s^{-\frac{\rho-\mu}{2}}\|\chi\|_\mu$ (analyticity of $T$), we integrate against $\xi_\al$ using the moment formula \eqref{eq:moment}.
\end{proof}

We also define the operator
\[
\cS_\al(t) := t^{\al-1} E_{\al,\al}(t), \qquad t>0,
\]
which appears in the stochastic convolution and the nonlinear term.

\subsection{Hermite process and hypercontractivity}
\label{subsec:hermite}

Let $(\Omega,\cF,\Prob)$ be a complete probability space and let $(B(y))_{y\in\R}$ be a standard Brownian motion. For an integer $k\ge1$ and $H\in(1/2,1)$, the Hermite process of order $k$ is defined as the $k$-fold Wiener-It\^o integral
\[
Z_H^k(t) = c(H,k) \int_{\R^k} \int_0^t \prod_{j=1}^k (s-y_j)_+^{-\left(\frac12 + \frac{1-H}{k}\right)} ds \, dB(y_1)\cdots dB(y_k), \qquad t\ge0,
\]
where $x_+ = \max(x,0)$ and $c(H,k)>0$ is a normalizing constant such that $\E[(Z_H^k(1))^2] = 1$. The kernel $\prod_{j=1}^k (s-y_j)_+^{-(\frac12 + \frac{1-H}{k})}$, as a function of $(y_1,\ldots,y_k)\in\R^k$ for fixed $s\in[0,t]$, belongs to $L^2(\R^k)$ provided $\frac12 + \frac{1-H}{k} < \frac12$, i.e., $H>\frac12$, which holds under our assumption $H\in(1/2,1)$. The integrability over $[0,t]$ follows by Fubini's theorem \cite[Proposition 2.1]{maejima2007wiener}.

The covariance of $Z_H^k$ is given by \eqref{eq:hermite_cov}. Thus $Z_H^k$ is $H$-self-similar, has stationary increments, and its sample paths are H\"older continuous of any order $\delta < H$. For $k=1$, $Z_H^1$ is fractional Brownian motion. For $k=2$, $Z_H^2$ is the Rosenblatt process \cite{tudor2008analysis}, which is non-Gaussian.

Let $\cH$ be the covariance Hilbert space associated with $Z_H^k$, defined as the completion of elementary functions with respect to the inner product
\[
\langle f,g\rangle_{\cH} = H(2H-1) \int_{\R}\int_{\R} f(u)g(v) |u-v|^{2H-2} du\,dv.
\]
For $f\in\cH$, the Wiener integral $\int_{\R} f(u)\,dZ_H^k(u)$ is well defined and belongs to the $k$-th Wiener chaos. The space $|\cH|$ defined by
\[
|\cH| = \left\{ f:\R\to\R \mid \int_{\R}\int_{\R} |f(u)||f(v)||u-v|^{2H-2} du dv < \infty \right\}
\]
is a strict subspace of $\cH$; we have the inclusions
\[
L^2(\R)\cap L^1(\R) \subset L^{1/H}(\R) \subset |\cH| \subset \cH.
\]

A fundamental property of multiple Wiener-It\^o integrals is hypercontractivity.

\begin{lemma}[Hypercontractivity]\label{lem:hyper}
For any random variable $F$ belonging to the $k$-th Wiener chaos and any $p\ge2$,
\[
\|F\|_{L^p(\Omega)} \le (p-1)^{k/2} \|F\|_{L^2(\Omega)}.
\]
\end{lemma}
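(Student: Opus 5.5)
The statement is the classical hypercontractivity inequality for a single Wiener chaos. I would prove it through the Ornstein--Uhlenbeck semigroup and Nelson's hypercontractivity theorem.

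Let $(P_t)_{t\ge0}$ be the Ornstein--Uhlenbeck (Mehler) semigroup on $L^2(\Omega)$, where $\Omega$ is generated by the Brownian motion $B$ underlying $Z_H^k$. By the Wiener chaos decomposition, $P_t$ acts on the $n$-th chaos as multiplication by $e^{-nt}$. Hence, for $F$ in the $k$-th chaos, $P_tF=e^{-kt}F$.

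Nelson's theorem states that $\|P_tG\|_{L^q(\Omega)}\le\|G\|_{L^2(\Omega)}$ whenever $e^{2t}\ge q-1$. I take this as a known result; it is proved via Gross' logarithmic Sobolev inequality, or by tensorizing the two-point/Gaussian Bonami inequality and passing to the limit. Apply it with $q=p$ and $t$ chosen so that $e^{2t}=p-1$, which is admissible since $p\ge2$. Then
\[
e^{-kt}\|F\|_{L^p(\Omega)}=\|P_tF\|_{L^p(\Omega)}\le\|F\|_{L^2(\Omega)},
\]
and therefore $\|F\|_{L^p(\Omega)}\le e^{kt}\|F\|_{L^2(\Omega)}=(p-1)^{k/2}\|F\|_{L^2(\Omega)}$.

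The remaining steps are routine verifications:
\begin{enumerate}
\item Show that $P_t$ acts diagonally on chaoses. Write $F=I_k(f)$ and use Mehler's formula $P_tF=\E'[F(e^{-t}B+\sqrt{1-e^{-2t}}B')]$ with an independent copy $B'$. Since $I_k$ is $k$-homogeneous in the driving noise and $B'$ is centered, this gives $P_tI_k(f)=e^{-kt}I_k(f)$. One can check this first on elementary tensors of Hermite polynomials and then extend by density and $L^2$ isometry.
\item Make sure the $L^p$ norm is finite, so the inequality is not vacuous. This follows because the elementary (finite-dimensional) chaos elements are dense and the bound passes to limits by Fatou's lemma.
\end{enumerate}

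\textbf{Main obstacle.} The main step is Nelson's inequality itself. If I am not allowed to quote it, I would first prove the one-dimensional Gaussian log-Sobolev inequality, tensorize it, and derive hypercontractivity with Gross' argument, by differentiating $t\mapsto\|P_tG\|_{q(t)}$ along $q(t)=1+e^{2t}$.
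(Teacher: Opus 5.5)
Your proposal is correct and is essentially the proof the paper relies on: the paper gives no argument and only cites the standard reference, where the bound is obtained exactly as you do, via Nelson's hypercontractivity for the Ornstein--Uhlenbeck semigroup, the identity $P_tF=e^{-kt}F$ on the $k$-th chaos, and the choice $e^{2t}=p-1$. Two minor remarks: your finiteness step is unnecessary, since Nelson's bound already gives $\|P_tF\|_{L^p(\Omega)}\le\|F\|_{L^2(\Omega)}<\infty$; and the ``$k$-homogeneity'' heuristic for $P_tI_k(f)=e^{-kt}I_k(f)$ should be replaced by the Hermite-polynomial/generating-function check you also mention, because multiple integrals are Wick-renormalized rather than literally homogeneous in the noise.
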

For a proof, see \cite[Theorem 2.8.12]{nualart2006}. This inequality is sharp and explicitly captures the dependence on the Hermite order $k$.

\section{Wiener integral with respect to the Hermite process (Proof of Theorem \ref{thm:hyper})}
\label{sec:wiener}

In this section we construct the Wiener integral with respect to the Hermite process $Z_H^k$ and prove the $L^p$ estimates announced in Theorem~\ref{thm:hyper}. The key idea is to represent the integral as a multiple Wiener-It\^o integral and then apply hypercontractivity.

\subsection{Definition and isometry}

Let $\cE$ be the set of elementary functions of the form
\[
f(u) = \sum_{l=1}^n a_l \mathbf{1}_{(t_l, t_{l+1}]}(u), \qquad t_1 < \cdots < t_{n+1},\ a_l \in \R.
\]
For such an $f$, we define the Wiener integral by
\[
\int_{\R} f(u)\,dZ_H^k(u) := \sum_{l=1}^n a_l \bigl( Z_H^k(t_{l+1}) - Z_H^k(t_l) \bigr).
\]

Using the representation of $Z_H^k$ as a multiple Wiener-It\^o integral, one can show that
\[
\int_{\R} f(u)\,dZ_H^k(u) = I_k(F_f),
\]
where $I_k$ denotes the $k$-fold Wiener-It\^o integral and $F_f$ is a symmetric kernel derived from $f$. The details can be found in \cite[Section 3]{maejima2007wiener}.

\begin{lemma}[Isometry]\label{lem:isometry}
For any $f\in\cE$,
\[
\E\left[ \left( \int_{\R} f(u)\,dZ_H^k(u) \right)^2 \right] = \|f\|_{\cH}^2,
\]
where
\[
\|f\|_{\cH}^2 = H(2H-1) \int_{\R}\int_{\R} f(u)f(v) |u-v|^{2H-2} du\,dv.
\]
\end{lemma}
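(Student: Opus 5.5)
By linearity of the Wiener integral on $\cE$, I would reduce the statement to the covariance of increments. For $f=\sum_{l=1}^n a_l\mathbf{1}_{(t_l,t_{l+1}]}$ one has
\[
\E\Big[\Big(\int_\R f\,dZ_H^k\Big)^2\Big]=\sum_{l,m=1}^n a_la_m\,\E\big[(Z_H^k(t_{l+1})-Z_H^k(t_l))(Z_H^k(t_{m+1})-Z_H^k(t_m))\big].
\]
Both sides of the claimed identity are bilinear in $f$. It therefore suffices to prove, for arbitrary intervals $(a,b]$ and $(c,d]$, that
\[
\E\big[(Z_H^k(b)-Z_H^k(a))(Z_H^k(d)-Z_H^k(c))\big]=H(2H-1)\int_a^b\int_c^d|u-v|^{2H-2}\,dv\,du.
\]
Throughout I use the time-$t$ values for $t\ge0$. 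Since the elementary functions in $\cE$ may have negative breakpoints, I would either first extend $Z_H^k$ to a two-sided process (the kernel representation with $\int_0^t$ replaced by the signed integral makes sense for all $t\in\R$), or translate all breakpoints by stationarity of increments.

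\textbf{Step 1 (covariance of increments).} From \eqref{eq:hermite_cov}, which by stationarity of increments also holds for the two-sided extension in the form $\E[Z(t)Z(s)]=\frac12(|t|^{2H}+|s|^{2H}-|t-s|^{2H})$, expanding the product gives
\[
\E\big[(Z(b)-Z(a))(Z(d)-Z(c))\big]=\tfrac12\big(|b-c|^{2H}+|a-d|^{2H}-|b-d|^{2H}-|a-c|^{2H}\big).
\]
The $|\cdot|^{2H}$ terms in the single variables cancel.

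\textbf{Step 2 (integral representation).} I would compute the double integral directly. Since $H>1/2$, the function $|u-v|^{2H-2}$ is locally integrable, and $\partial_u\partial_v\big(-\tfrac{1}{2}|u-v|^{2H}\big)=H(2H-1)|u-v|^{2H-2}$ for $u\ne v$. This holds in the distributional sense on all of $\R^2$, because the first derivative $-H|u-v|^{2H-1}\operatorname{sgn}(u-v)$ (up to sign) is continuous, so no delta mass appears on the diagonal. Integrating over the rectangle $[a,b]\times[c,d]$ with the fundamental theorem of calculus in each variable reproduces exactly the four-term expression of Step 1.

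\textbf{Step 3 (conclusion).} Summing over $l,m$ with weights $a_la_m$ and using bilinearity gives $\|f\|_\cH^2$. The only delicate point is the diagonal singularity in Step 2 when the intervals overlap. I would handle it by the continuity argument above, or alternatively by integrating iteratively in $v$ and then $u$, checking convergence of $\int|u-v|^{2H-2}dv$ since $2H-2>-1$.

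An alternative route is to compute $\E[I_k(F_f)^2]=k!\|F_f\|^2_{L^2(\R^k)}$ from the kernel and evaluate the resulting integrals with the Beta-function identity $\int_\R(u-y)_+^{-\beta}(v-y)_+^{-\beta}dy=B(1-\beta,2\beta-1)|u-v|^{1-2\beta}$, with $\beta=\frac12+\frac{1-H}{k}$. That computation is heavier, and it is subsumed by the normalization $c(H,k)$ already built into \eqref{eq:hermite_cov}, so I would use the covariance route.
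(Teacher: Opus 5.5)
Your argument is correct, but it is not the route the paper has in mind. The paper gives no self-contained proof of this lemma. It writes $\int_\R f\,dZ_H^k = I_k(F_f)$ via the multiple Wiener--It\^o representation and defers the evaluation of $k!\,\|F_f\|^2_{L^2(\R^k)}$ to Maejima--Tudor. The same strategy reappears in the proof of Lemma~\ref{lem:isometry_Zk}, and it is essentially the kernel/Beta-function computation you set aside as your alternative.

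You instead use only the second-order structure. Bilinearity reduces everything to pairs of increments, and the covariance \eqref{eq:hermite_cov} gives the four-term expression. The identity $\partial_u\partial_v\bigl(-\tfrac12|u-v|^{2H}\bigr)=H(2H-1)|u-v|^{2H-2}$, integrated over the rectangle, then reproduces that expression. The integration is legitimate because $2H-2>-1$ and $v\mapsto |u-v|^{2H-1}\operatorname{sgn}(u-v)$ is absolutely continuous.

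This is shorter and fully self-contained. It also makes transparent that the isometry is the same for every $k$: it is the fBm isometry, with all $k$-dependence absorbed into $c(H,k)$. You also handle negative breakpoints explicitly through the two-sided extension, which the paper passes over.

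What the kernel route buys is an explicit kernel $F_f$, and hence membership in the $k$-th chaos, which Lemma~\ref{lem:hyper_estimate} needs for hypercontractivity. Your approach gets this cheaply too. Each increment $Z_H^k(t_{l+1})-Z_H^k(t_l)$ lies in the $k$-th chaos, and that chaos is a closed linear subspace of $L^2(\Omega)$. So membership holds for every $f\in\cE$ and persists under the $L^2$-extension of the integral to $\cH$.
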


Let $\cH$ be the completion of $\cE$ with respect to the inner product
\[
\langle f,g\rangle_{\cH} = H(2H-1) \int_{\R}\int_{\R} f(u)g(v) |u-v|^{2H-2} du\,dv.
\]
By the isometry, the map $f \mapsto \int_{\R} f(u)\,dZ_H^k(u)$ extends uniquely to a linear isometry from $\cH$ into $L^2(\Omega)$. For any $f\in\cH$, the Wiener integral is well defined and satisfies the isometry property.

\subsection{$L^p$ estimates via hypercontractivity}

We now prove the $L^p$ estimates that are essential for the rest of the paper.

\begin{lemma}[$L^p$ estimates]\label{lem:hyper_estimate}
For any deterministic function $f\in\cH$ and any $p\ge2$,
\[
\left\| \int_{\R} f(u)\,dZ_H^k(u) \right\|_{L^p(\Omega)} \le (p-1)^{k/2} \|f\|_{\cH}.
\]
\end{lemma}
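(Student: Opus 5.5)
The plan is to combine three ingredients: the chaos representation $\int_{\R} f\,dZ_H^k = I_k(F_f)$, the isometry of Lemma~\ref{lem:isometry}, and the hypercontractivity bound of Lemma~\ref{lem:hyper}. I will first prove the estimate for elementary $f\in\cE$ and then pass to general $f\in\cH$ by density.

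\emph{Step 1 (elementary functions).} Let $f\in\cE$. By the representation recalled from \cite[Section 3]{maejima2007wiener}, $F:=\int_{\R} f(u)\,dZ_H^k(u)=I_k(F_f)$ with $F_f$ a symmetric kernel in $L^2(\R^k)$. Hence $F$ lies in the $k$-th Wiener chaos. Lemma~\ref{lem:hyper} then gives
\[
\|F\|_{L^p(\Omega)}\le (p-1)^{k/2}\|F\|_{L^2(\Omega)},
\]
and by Lemma~\ref{lem:isometry}, $\|F\|_{L^2(\Omega)}=\|f\|_{\cH}$. This proves the claim on $\cE$.

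\emph{Step 2 (extension to $\cH$).} Let $f\in\cH$ and take $f_n\in\cE$ with $f_n\to f$ in $\cH$, which is possible since $\cH$ is defined as the completion of $\cE$. Set $F_n=\int f_n\,dZ_H^k$. By linearity and Step 1,
\[
\|F_n-F_m\|_{L^p}\le (p-1)^{k/2}\|f_n-f_m\|_{\cH},
\]
so $(F_n)$ is Cauchy in $L^p(\Omega)$ and converges there to some limit $G$. The integral $F=\int f\,dZ_H^k$ is by definition the $L^2(\Omega)$-limit of $F_n$. Since $p\ge2$, convergence in $L^p$ implies convergence in $L^2$ on a probability space, so $G=F$ almost surely. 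Passing to the limit in $\|F_n\|_{L^p}\le (p-1)^{k/2}\|f_n\|_{\cH}$ then gives the result. Alternatively, the $k$-th chaos is closed in $L^2(\Omega)$, so $F\in\mathcal{H}_k$ and Lemma~\ref{lem:hyper} applies to $F$ directly.

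\emph{Main obstacle.} The only nontrivial point is that the integral lands exactly in the $k$-th chaos, which is what allows hypercontractivity with exponent $k/2$. For elementary $f$ this is the Maejima--Tudor representation. By linearity, $F_f(y_1,\dots,y_k)=c(H,k)\int_{\R} f(s)\prod_{j}(s-y_j)_+^{-(\frac12+\frac{1-H}{k})}\,ds$, obtained by Fubini from the defining kernel of $Z_H^k$. The closedness of the $k$-th chaos then handles general $f\in\cH$. I would take this representation as given, citing \cite{maejima2007wiener}, rather than re-deriving it.
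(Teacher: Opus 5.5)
Your proposal is correct and follows the same route as the paper: membership of $\int f\,dZ_H^k$ in the $k$-th Wiener chaos, then the hypercontractivity of Lemma~\ref{lem:hyper}, then the isometry $\|\int f\,dZ_H^k\|_{L^2(\Omega)}=\|f\|_{\cH}$. The paper applies chaos membership and the isometry directly to general $f\in\cH$. Your density step from $\cE$ to $\cH$, or equivalently the closedness of the $k$-th chaos in $L^2(\Omega)$, makes that extension rigorous.
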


\begin{proof}
The random variable $\int_{\R} f\,dZ_H^k$ belongs to the $k$-th Wiener chaos. Indeed, from the representation of $Z_H^k$ as a multiple Wiener-It\^o integral, the integral $\int_{\R} f\,dZ_H^k$ is again a multiple Wiener-It\^o integral of order $k$. Therefore, by the hypercontractivity property (Lemma~\ref{lem:hyper}), we have for any $p\ge2$,
\[
\left\| \int_{\R} f\,dZ_H^k \right\|_{L^p(\Omega)} \le (p-1)^{k/2} \left\| \int_{\R} f\,dZ_H^k \right\|_{L^2(\Omega)}.
\]
The $L^2$ norm equals $\|f\|_{\cH}$ by the isometry (Lemma~\ref{lem:isometry}). Combining these two facts yields the desired estimate.
\end{proof}

\begin{rmk}
The factor $(p-1)^{k/2}$ is optimal in general. For $k=1$, we recover the classical estimate for Gaussian random variables: $\|F\|_{L^p} \le \sqrt{p-1}\,\|F\|_{L^2}$. For $k=2$, the factor is $p-1$, which is characteristic of Rosenblatt-type distributions. As $k$ increases, higher-order moments grow faster, reflecting the non-Gaussian nature of the noise.
\end{rmk}

\subsection{Application to the stochastic convolution}

The $L^p$ estimate of Lemma~\ref{lem:hyper_estimate} will be applied repeatedly to the stochastic convolution
\[
Z_k(t) = \int_0^t \cS_\al(t-s) \, dZ_H^k(s).
\]
For each fixed $t$, the integrand $s \mapsto \cS_\al(t-s)$ is a deterministic function. Provided it belongs to $\cH$, the integral is well defined and Lemma~\ref{lem:hyper_estimate} gives
\[
\|Z_k(t)\|_{L^p(\Omega;\dot{H}^\nu)} \le (p-1)^{k/2} \|Z_k(t)\|_{L^2(\Omega;\dot{H}^\nu)}.
\]
Thus, once we control the $L^2$ norm of $Z_k(t)$ in $\dot{H}^\nu$, the $L^p$ norm follows automatically. This observation is crucial for the regularity analysis in the next section.

\begin{rmk}
The restriction to deterministic integrands is essential here. If the integrand were random, the random variable would not necessarily belong to a fixed Wiener chaos, and hypercontractivity would not apply directly. This is why we restrict to additive noise in this paper.
\end{rmk}

\section{Stochastic convolution and its regularity (Proof of Theorem \ref{thm:Zk})}
\label{sec:convolution}

In this section we study the stochastic convolution
\[
Z_k(t) = \int_0^t \cS_\al(t-s) \, dZ_H^k(s),
\]
where $\cS_\al(t)=t^{\al-1}E_{\al,\al}(t)$ is the Mittag-Leffler operator introduced in Section~\ref{subsec:caputo}. We prove the $L^p$ regularity estimates announced in Theorem~\ref{thm:Zk}. The key ingredients are a Hilbert-Schmidt estimate for $\cS_\al$ and the hypercontractivity of Wiener chaos.

\subsection{Basic estimates for $\cS_\al$}

We first recall some elementary bounds for the operator $\cS_\al$ acting from $L^2$ to $\dot{H}^\nu$.

\begin{lemma}\label{lem:Salpha_basic}
For $0\le\nu<2$ and $0<\al<1$, there exists a constant $C>0$ such that for any $t>0$ and any $\chi\in\dot{H}^0$,
\[
\|\cS_\al(t)\chi\|_\nu \le C t^{\frac{(2-\nu)\al-2}{2}} \|\chi\|.
\]
Moreover, for $0<t_1<t_2\le T$,
\[
\|\cS_\al(t_2) - \cS_\al(t_1)\|_{\cL(\dot{H}^0,\dot{H}^\nu)} \le C (t_2-t_1)^{\frac{2-(2-\nu)\al}{2}}.
\]
\end{lemma}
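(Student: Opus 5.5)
The plan is to work entirely through the spectral decomposition of $A$ and the subordination formula \eqref{eq:Ealphaalpha}, so that both estimates reduce to scalar bounds on the functions $t\mapsto t^{\al-1}\lambda_j^{\nu/2}e_{\al,\al}(-\lambda_j t^\al)$, where $e_{\al,\al}$ denotes the scalar Mittag-Leffler function. For the first bound I will simply invoke Lemma~\ref{lem:MittagLeffler_estimates}:
\[
\|\cS_\al(t)\chi\|_\nu = t^{\al-1}\|E_{\al,\al}(t)\chi\|_\nu \le C t^{\al-1}t^{-\frac{\al\nu}{2}}\|\chi\| = C t^{\frac{(2-\nu)\al-2}{2}}\|\chi\|.
\]
This is immediate and valid for $0\le\nu<2$.

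For the difference estimate I will split
\[
\cS_\al(t_2)-\cS_\al(t_1) = \bigl(t_2^{\al-1}-t_1^{\al-1}\bigr)E_{\al,\al}(t_2) + t_1^{\al-1}\bigl(E_{\al,\al}(t_2)-E_{\al,\al}(t_1)\bigr).
\]
For the second term I will use Lemma~\ref{lem:MittagLeffler_diff} for $E_{\al,\al}$, giving the contribution $C t_1^{\al-1}(t_2-t_1)^{\al\nu/2}$. For the first term I will use Lemma~\ref{lem:MittagLeffler_estimates} together with the elementary inequality $|t_2^{\al-1}-t_1^{\al-1}|\le C (t_2-t_1)^{1-\al}\,t_1^{\al-1}t_2^{\al-1}$, valid since $1-\al\in(0,1)$, which gives $C(t_2-t_1)^{1-\al}t_1^{\al-1}t_2^{\al-1-\al\nu/2}$. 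Alternatively, and more cleanly, I will differentiate in $t$. Writing $\cS_\al(t)=\sum_j t^{\al-1}e_{\al,\al}(-\lambda_j t^\al)\langle\cdot,e_j\rangle e_j$, one has $\frac{d}{dt}\bigl(t^{\al-1}e_{\al,\al}(-\lambda t^\al)\bigr)=t^{\al-2}e_{\al,\al-1}(-\lambda t^\al)$. The uniform bound $|e_{\al,\al-1}(-x)|\le C(1+x)^{-1}$ then yields
\[
\|\cS_\al'(t)\|_{\cL(\dot H^0,\dot H^\nu)}\le C t^{\frac{(2-\nu)\al}{2}-2}.
\]
I would then interpolate between the crude bound $\|\cS_\al(t_2)-\cS_\al(t_1)\|\le 2Ct_1^{\frac{(2-\nu)\al-2}{2}}$ and the mean-value bound $C(t_2-t_1)t_1^{\frac{(2-\nu)\al}{2}-2}$. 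Taking the geometric mean with weight $\theta=\frac{2-(2-\nu)\al}{2}\in(0,1)$ produces the power $(t_2-t_1)^{\theta}$, times a power of $t_1$ that must then be absorbed using $t_1\le T$.

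The main obstacle is exactly this last absorption. The exponent of $t_1$ left over by the interpolation is $\frac{(2-\nu)\al-2}{2}-\theta\bigl(1-\ldots\bigr)$, and one must check whether it is nonnegative so that the factor is bounded by a constant depending on $T$. I expect the delicate regime to be $t_1\ll t_2-t_1$, since $\|\cS_\al(t)\|_{\cL(\dot H^0,\dot H^\nu)}$ itself blows up like $t^{-\theta}$ there. In that case I would try the following. First bound $\|\cS_\al(t_2)\|\le C(t_2-t_1)^{-\theta}\cdot(\ldots)$ using $t_2\ge t_2-t_1$. Then handle $\|\cS_\al(t_1)\|$ by examining the eigenmode-wise expression, restricted to the frequencies where $\lambda_j t_1^\al\gtrsim1$, using the decay of $e_{\al,\al}$. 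If this absorption cannot be made uniform, the constant $C$ must be allowed to depend on a lower bound for $t_1$ in the application. Even in that case, the stated exponent $\frac{2-(2-\nu)\al}{2}$ is the one produced by the interpolation above, and it is this exponent that enters $\gamma$ in Theorem~\ref{thm:Zk}.
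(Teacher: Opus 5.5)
Your first estimate is correct and coincides with the paper's argument via Lemma~\ref{lem:MittagLeffler_estimates}. The second estimate cannot be completed. The obstacle you flag at the end is fatal, because the inequality is false as stated.

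If you finish the computation you left open, the geometric mean of $2Ct_1^{-\theta}$ and $C(t_2-t_1)t_1^{-\theta-1}$, with weight $\theta=\frac{2-(2-\nu)\al}{2}\in(0,1)$, is $C(t_2-t_1)^{\theta}t_1^{-2\theta}$. Your splitting route likewise leaves a factor $t_1^{\al-1}$. Neither negative power of $t_1$ can be absorbed using $t_1\le T$.

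No refinement can help. Test on $\chi=e_1$:
\[
\|\cS_\al(t)e_1\|_\nu=\lambda_1^{\nu/2}\,t^{\al-1}e_{\al,\al}(-\lambda_1t^\al)\sim \frac{\lambda_1^{\nu/2}}{\Ga(\al)}\,t^{\al-1}\to\infty \qquad (t\to0^+).
\]
With $t_2=T$ fixed and $t_1\to0^+$, the left-hand side of the claimed bound blows up while the right-hand side stays below $CT^{\theta}$. With $t_2=2t_1$, the right-hand side even tends to $0$.

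This also shows why your proposed repair fails. The blow-up comes from the lowest mode, not from the frequencies with $\lambda_jt_1^\al\gtrsim1$. Bounding $\|\cS_\al(t_2)\|$ by a negative power of $t_2-t_1$ goes in the wrong direction. Your fallback, a constant depending on a lower bound for $t_1$, is the only correct form of the statement.

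The paper's own proof does not escape this. It integrates the derivative bound $\|\frac{d}{dt}\cS_\al(t)\|_{\cL(\dot H^0,\dot H^\nu)}\le Ct^{\frac{2-(2-\nu)\al}{2}-1}$. That rate is integrable at $0$, so it would force $\cS_\al$ to stay bounded near $t=0$, contradicting the sharp first estimate. The subordination formula quoted there is that of $E_{\al,\al}$ and drops the factor $t^{\al-1}$.

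Your rate $Ct^{\frac{(2-\nu)\al}{2}-2}$, obtained from $\frac{d}{dt}[t^{\al-1}e_{\al,\al}(-\lambda t^\al)]=t^{\al-2}e_{\al,\al-1}(-\lambda t^\al)$, is the correct one. Interpolating it with the first estimate proves what is actually true:
\[
\|\cS_\al(t_2)-\cS_\al(t_1)\|_{\cL(\dot H^0,\dot H^\nu)}\le C(t_2-t_1)^{\beta}\,t_1^{-\theta-\beta},\qquad \beta\in[0,1].
\]

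The later uses of the lemma (the term $I_1$ in Theorem~\ref{thm:reg2p}, and $J_2$, $J_4$) apply it at times $t_1-s$ that go down to $0$. What they really need is an estimate integrated in $s$. For example, splitting at $r=t_2-t_1$ gives
\[
\int_0^{t_1}\|\cS_\al(t_2-s)-\cS_\al(t_1-s)\|_{\cL(\dot H^0,\dot H^\nu)}\,ds\le C(t_2-t_1)^{\frac{(2-\nu)\al}{2}}.
\]
This handles $J_4$, but with a different exponent from the one stated.
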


\begin{proof}
The first estimate follows directly from Lemma~\ref{lem:MittagLeffler_estimates}:
\[
\|\cS_\al(t)\chi\|_\nu = t^{\al-1} \|E_{\al,\al}(t)\chi\|_\nu \le C t^{\al-1} t^{-\frac{\al\nu}{2}} \|\chi\| = C t^{\frac{(2-\nu)\al-2}{2}} \|\chi\|.
\]

For the second estimate, we use the differentiability of $\cS_\al$. \\Using the representation $\cS_\al(t) = \displaystyle \int_0^\infty \al\theta \xi_\al(\theta) T(t^\al\theta) d\theta$, one computes the derivative and obtains
\[
\left\| \frac{d}{dt} \cS_\al(t) \right\|_{\cL(\dot{H}^0,\dot{H}^\nu)} \le C t^{\frac{2-(2-\nu)\al}{2}-1}.
\]
Integrating from $t_1$ to $t_2$ gives the desired estimate. For details, see \cite[Lemma 3.1]{zou2018stochastic}.
\end{proof}

\subsection{Isometry for vector-valued integrands}

\begin{lemma}[Isometry for vector-valued integrands]\label{lem:isometry_Zk}
Let $\phi : [0,t] \to \cL_2(L^2,\dot{H}^\nu)$ be a deterministic function such that
\[
\int_0^t\int_0^t \langle \phi(u), \phi(v) \rangle_{\cL_2} |u-v|^{2H-2} du dv < \infty.
\]
Then the stochastic integral $\int_0^t \phi(s) dZ_H^k(s)$ belongs to the $k$-th Wiener chaos and satisfies
\[
\E\left[\left\|\int_0^t \phi(s) dZ_H^k(s)\right\|_\nu^2\right]
= H(2H-1) \int_0^t\int_0^t \langle \phi(u), \phi(v) \rangle_{\cL_2} |u-v|^{2H-2} du dv.
\]
\end{lemma}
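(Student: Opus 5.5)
The plan is to reduce to the scalar isometry of Lemma~\ref{lem:isometry} by expanding in orthonormal bases. Let $\{e_j\}$ be the eigenbasis of $A$ in $L^2_\sigma(D)$. The normalized vectors $\lambda_j^{-\nu/2}e_j$ then form an orthonormal basis of $\dot{H}^\nu$. The noise $Z_H^k$ is scalar, so the integrand $\phi(s)$ acts on a fixed direction. Accordingly, I interpret $\int_0^t \phi(s)\,dZ_H^k(s)$ as the $\dot{H}^\nu$-valued random variable
\[
\sum_{i,j} \Bigl(\int_0^t \phi_{ij}(s)\,dZ_H^k(s)\Bigr)\, \lambda_j^{-\nu/2}e_j,
\qquad \phi_{ij}(s)=\langle \phi(s)e_i, \lambda_j^{-\nu/2}e_j\rangle_\nu .
\]
This is the Hilbert--Schmidt convention: each $e_i$ is treated as carrying its own copy of the noise, as in the cylindrical setting of \cite{zou2018stochastic}. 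Each coefficient $\phi_{ij}$ is a deterministic scalar function on $[0,t]$.

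\emph{Step 1: each coefficient is well defined.} I will show that $\phi_{ij}\in|\cH|\subset\cH$. By Cauchy--Schwarz, $|\phi_{ij}(u)|\le\|\phi(u)\|_{\cL_2}$. It therefore suffices that $\int\int\|\phi(u)\|_{\cL_2}\|\phi(v)\|_{\cL_2}|u-v|^{2H-2}\,du\,dv<\infty$. I will take this as the intended reading of the hypothesis. For the $\cS_\al$ integrand it holds in any case, by Lemma~\ref{lem:Salpha_basic} together with a Hilbert--Schmidt bound.

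\emph{Step 2: compute the second moment.} For fixed $(i,j)$, Lemma~\ref{lem:isometry}, extended to $\cH$ by density, gives
\[
\E\Bigl[\Bigl(\int\phi_{ij}\,dZ_H^k\Bigr)^2\Bigr]=H(2H-1)\int\int\phi_{ij}(u)\phi_{ij}(v)|u-v|^{2H-2}\,du\,dv .
\]
I then sum over $(i,j)$. The cross terms are handled by the same convention as above, namely that distinct $e_i$ carry independent noises. If one instead uses a single scalar noise, then $\phi(s)$ is applied to a fixed vector and the same computation goes through with that vector in place of the basis. Exchanging the sum with the double integral uses Fubini--Tonelli, dominated by $\sum_{ij}|\phi_{ij}(u)||\phi_{ij}(v)|\le\|\phi(u)\|_{\cL_2}\|\phi(v)\|_{\cL_2}$, which is integrable by Step 1. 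The identity $\sum_{ij}\phi_{ij}(u)\phi_{ij}(v)=\langle\phi(u),\phi(v)\rangle_{\cL_2}$ then yields the claimed formula. The same computation applied to partial sums shows that the series is Cauchy in $L^2(\Omega;\dot{H}^\nu)$, so the integral is well defined.

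\emph{Step 3: chaos membership.} Each coefficient lies in the $k$-th chaos, as noted in the proof of Lemma~\ref{lem:hyper_estimate}. The $k$-th chaos is a closed subspace of $L^2(\Omega)$. Hence the $L^2$-limit of the series lies in the $\dot{H}^\nu$-valued $k$-th chaos.

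The main obstacle is Step 1, specifically the passage from the signed hypothesis to an absolutely convergent one. The kernel $\langle\phi(u),\phi(v)\rangle_{\cL_2}$ need not be nonnegative, yet Fubini requires domination. My first attempt is to argue via positive-definiteness of $|u-v|^{2H-2}$: then the signed integral controls the $\cH$-norms of the truncated sums, and the density argument in $\cH$ works without absolute integrability. If that fails, I will simply strengthen the hypothesis to the absolute version, which suffices for every application in the paper.
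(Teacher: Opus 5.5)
Your argument is correct once the noise is read as cylindrical and the hypothesis as an integrability condition. It takes a genuinely different route from the paper.

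\textbf{The paper's route.} The paper defines $\int_0^t\phi\,dZ_H^k$ in one stroke as $I_k(\Phi)$. The kernel $\Phi$ is operator-valued and built from the representation $Z_H^k(s)=c(H,k)I_k(f_s)$; it should use $\partial_s f_s=\prod_j(s-y_j)_+^{-d}$ with $d=\frac12+\frac{1-H}{k}$, not $f_s$. The paper then applies the isometry of multiple Wiener--It\^o integrals. It converts $k!\|\Phi\|^2$ into the $|u-v|^{2H-2}$ form by the Maejima--Tudor kernel computation. That computation is really the identity $\int_{\R}(u-y)_+^{-d}(v-y)_+^{-d}\,dy=B(1-d,2d-1)\,|u-v|^{1-2d}$ raised to the $k$-th power.

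\textbf{Your route.} You never touch kernels. You reduce to the scalar isometry of Lemma~\ref{lem:isometry}, which for step functions is just the covariance \eqref{eq:hermite_cov}. The Hilbert--Schmidt pairing then comes from Fubini with the domination $\sum_{i,j}|\phi_{ij}(u)||\phi_{ij}(v)|\le\|\phi(u)\|_{\cL_2}\|\phi(v)\|_{\cL_2}$. Chaos membership comes from closedness rather than by construction.

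\textbf{What each buys.} Your route is more elementary and uniform in $k$. It also exposes an assumption the paper leaves implicit. A $\dot H^\nu$-valued integral whose second moment is the Hilbert--Schmidt pairing requires a cylindrical noise $\sum_i e_iZ^{(i)}$ with independent copies; the cross terms in $i$ then vanish by independence and centeredness. With the single scalar $Z_H^k$ of Section~\ref{subsec:hermite}, the paper's operator-valued $\Phi$ only yields an $\cL_2(L^2,\dot H^\nu)$-valued variable. The paper's route, in turn, produces the explicit chaos kernel, which is what hypercontractivity and any Malliavin-type argument actually use.

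\textbf{Corrections.}
\begin{itemize}
\item Put the index on the noise in your display, $dZ^{(i)}$ instead of $dZ_H^k$. With one noise the double series is $\phi(s)$ applied to $\sum_i e_i\notin L^2$, and it need not converge.
\item Drop the sentence about a single scalar noise and a fixed vector $h$. That computation gives $\langle\phi(u)h,\phi(v)h\rangle_\nu$, not $\langle\phi(u),\phi(v)\rangle_{\cL_2}$.
\item The bound behind your Step~1 for $\cS_\al$ is Lemma~\ref{lem:HS}, not Lemma~\ref{lem:Salpha_basic}. The absolute condition $\|\phi(\cdot)\|_{\cL_2}\in|\cH|$ is exactly what Lemma~\ref{lem:Zk_L2} verifies, since it applies Cauchy--Schwarz before integrating.
\item The positive-definiteness detour via truncation in $(i,j)$ does not work as sketched. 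The truncated double integrals are not defined without absolute integrability. Also, $\langle P\phi(u),P\phi(v)\rangle_{\cL_2}$ is not dominated by $|\langle\phi(u),\phi(v)\rangle_{\cL_2}|$.
\end{itemize}

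\textbf{Reaching the literal hypothesis.} Read it as $\langle\phi(u),\phi(v)\rangle_{\cL_2}|u-v|^{2H-2}\in L^1([0,t]^2)$, and truncate in amplitude instead.
\begin{itemize}
\item Set $\phi^M=\phi\,\mathbf{1}_{\{\|\phi\|_{\cL_2}\le M\}}$. It satisfies your absolute condition, so your argument applies to it.
\item For $M<M'$, the second moment of $\int(\phi^{M'}-\phi^M)\,dZ$ is $H(2H-1)$ times the integral of that kernel over $A\times A$, where $A=\{M<\|\phi\|_{\cL_2}\le M'\}$.
\item This tends to $0$ by dominated convergence, with dominating function $|\langle\phi(u),\phi(v)\rangle_{\cL_2}||u-v|^{2H-2}$.
\item The same domination passes the isometry to the limit.
\end{itemize}
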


\begin{proof}
For $k=1$, this is the classical isometry for fractional Brownian motion (see \cite[Proposition 2.1]{pipiras2000convergence}). For $k\ge2$, the Hermite process $Z_H^k$ admits the multiple Wiener-It\^o integral representation
\[
Z_H^k(s) = c(H,k) I_k(f_s),
\]
where $f_s(y_1,\ldots,y_k) = \int_0^s \prod_{j=1}^k (u-y_j)_+^{-(\frac12 + \frac{1-H}{k})} du$ (see \cite[Section 2]{maejima2007wiener}). The stochastic integral $\int_0^t \phi(s) dZ_H^k(s)$ is defined as the multiple Wiener-It\^o integral of order $k$ with kernel
\[
\Phi(y_1,\ldots,y_k) = c(H,k) \int_0^t \phi(s) f_s(y_1,\ldots,y_k) ds.
\]
By the isometry property of multiple Wiener-It\^o integrals,
\[
\E\left[\left\|\int_0^t \phi(s) dZ_H^k(s)\right\|_\nu^2\right]
= k! \|\Phi\|_{\cL_2(L^2,\dot{H}^\nu)^{\otimes k}}^2.
\]
A direct computation, using the fact that the covariance of $Z_H^k$ is given by \eqref{eq:hermite_cov} (see \cite[Proposition 3.1]{maejima2007wiener} for the detailed calculation), yields the desired formula. No closed-form Beta function identity is needed; the computation follows from the induction on $k$ carried out in \cite[Section 3]{maejima2007wiener}.
\end{proof}

\subsection{Hilbert-Schmidt estimate}

For the $L^2$ estimate of the stochastic convolution, we need a more precise bound: the Hilbert-Schmidt norm of $\cS_\al(r)$ from $L^2$ to $\dot{H}^\nu$. Recall that for a linear operator $K$ between separable Hilbert spaces, the Hilbert-Schmidt norm is defined by
\[
\|K\|_{\cL_2(U,V)}^2 = \sum_{j=1}^\infty \|K e_j\|_V^2,
\]
where $\{e_j\}$ is any orthonormal basis of $U$. In our setting, we take $U = L^2_\sigma(D)$ with the orthonormal basis $\{e_j\}$ of eigenvectors of $A$, and $V = \dot{H}^\nu$.

\begin{lemma}[Hilbert-Schmidt estimate]\label{lem:HS}
Assume $d=2$ and $\nu < 1$. Then for any $r>0$,
\[
\|\cS_\al(r)\|_{\cL_2(L^2,\dot{H}^\nu)}^2 \le C r^{\al(1-\nu)-2},
\]
where $C$ is a constant depending on $\al$, $\nu$, and the domain $D$.
\end{lemma}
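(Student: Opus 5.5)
The plan is to diagonalize $\cS_\al(r)$ in the eigenbasis $\{e_j\}$ of the Stokes operator $A$. This turns the Hilbert--Schmidt norm into an explicit series in the eigenvalues $\lambda_j$. We then control that series by an integral, using Weyl's asymptotics $\lambda_j\sim C_2 j$; this is where $d=2$ enters.

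First, since $T(s)e_j=e^{-s\lambda_j}e_j$, the representation \eqref{eq:Ealphaalpha} gives
\[
\cS_\al(r)e_j = r^{\al-1} m(\lambda_j r^\al)\, e_j, \qquad m(x):=\int_0^\infty \al\theta\,\xi_\al(\theta)e^{-x\theta}\,d\theta .
\]
Hence
\[
\|\cS_\al(r)\|_{\cL_2(L^2,\dot H^\nu)}^2 = r^{2\al-2}\sum_{j\ge1}\lambda_j^\nu\, m(\lambda_j r^\al)^2 .
\]
Next we bound the scalar function $m$ uniformly by
\[
0\le m(x)\le \frac{C}{1+x}, \qquad x\ge0 .
\]
This follows from $\xi_\al\ge0$ and two estimates.
\begin{itemize}
\item For small $x$: $m(x)\le m(0)=\al\,\Ga(2)/\Ga(1+\al)$, by the moment formula \eqref{eq:moment} with $\rho=1$.
\item For large $x$: $\theta e^{-x\theta}\le (ex)^{-1}$, which gives $m(x)\le \al (ex)^{-1}\int_0^\infty\xi_\al=\al/(ex)$, by \eqref{eq:moment} with $\rho=0$.
\end{itemize}

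Combining this bound with $c j\le\lambda_j\le C j$ (valid for all $j\ge1$, from Weyl's law and $\lambda_1>0$), we get
\[
\|\cS_\al(r)\|_{\cL_2}^2\le C r^{2\al-2}\sum_{j\ge1} g_r(j), \qquad g_r(x):=x^\nu\,(1+c\,x r^\al)^{-2}.
\]
To compare the sum with an integral, note that $g_r$ is unimodal on $(0,\infty)$: it increases and then decreases, with maximum at $x\asymp r^{-\al}$. For a unimodal function, $\sum_j g_r(j)\le \int_0^\infty g_r + 2\max g_r$. The substitution $y=c x r^\al$ gives
\[
\int_0^\infty g_r(x)\,dx = C r^{-\al(1+\nu)}\int_0^\infty y^\nu(1+y)^{-2}\,dy .
\]
The last integral is finite precisely because $-1<\nu<1$; this is where the hypothesis $\nu<1$ is used. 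The maximum satisfies $\max g_r\le C r^{-\al\nu}\le C T^{\al} r^{-\al(1+\nu)}$ for $0<r\le T$. Multiplying by $r^{2\al-2}$ yields the exponent $2\al-2-\al-\al\nu=\al(1-\nu)-2$, as claimed.

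The main point needing care is this sum-to-integral comparison. The summand is not monotone, the eigenvalue asymptotics are only two-sided bounds, and the constant ends up depending on the time horizon $T$ through the boundary term. For large $r$ one can alternatively bound the sum directly by $C r^{-2\al}\sum_j\lambda_j^{\nu-2}$, which converges since $\nu-2<-1$. In the paper's setting $r\in(0,T]$, so the $T$-dependent constant is harmless.
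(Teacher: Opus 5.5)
Your proof is correct and follows essentially the same route as the paper: diagonalize $\cS_\al(r)$ in the Stokes eigenbasis, bound the scalar multiplier by $C(1+x)^{-1}$, use $\lambda_j\asymp j$ in dimension two, compare the series with $\int y^\nu(1+y)^{-2}\,dy$ after the substitution $y=c\,x r^\al$, and use $\nu<1$ for convergence. Your version is somewhat more careful than the paper's: you derive the multiplier bound from the subordination formula and the moment identity instead of citing it, and your unimodal sum-to-integral comparison (together with the separate large-$r$ bound via $\sum_j\lambda_j^{\nu-2}$) correctly handles the fact that $x^\nu(1+c\,x r^\al)^{-2}$ increases up to $x\asymp r^{-\al}$, a point the paper's ``decreasing for $x$ large enough'' justification glosses over.
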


\begin{proof}
The eigenvalues of $\cS_\al(r)$ are $s_{\al,j}(r) = r^{\al-1} E_{\al,\al}(-\lambda_j r^\al)$. Using the Mittag-Leffler estimate $|E_{\al,\al}(-x)| \le C(1+x)^{-1}$ for $x>0$, we obtain
\[
|s_{\al,j}(r)| \le C r^{\al-1} (1+\lambda_j r^\al)^{-1}.
\]

Therefore,
\[
\|\cS_\al(r)\|_{\cL_2}^2 = \sum_{j=1}^\infty \lambda_j^\nu |s_{\al,j}(r)|^2 \le C r^{2\al-2} \sum_{j=1}^\infty \frac{\lambda_j^\nu}{(1+\lambda_j r^\al)^2}.
\]

By Weyl's asymptotics, there exist constants $c_1, c_2 > 0$ such that $c_1 j \le \lambda_j \le c_2 j$ for all $j\ge 1$. Hence,
\[
\sum_{j=1}^\infty \frac{\lambda_j^\nu}{(1+\lambda_j r^\al)^2}
\le \sum_{j=1}^\infty \frac{(c_2 j)^\nu}{(1+c_1 j r^\al)^2}
\le C \sum_{j=1}^\infty \frac{j^\nu}{(1+ c_1 r^\al j)^2}.
\]

Since the function $x \mapsto x^\nu (1+ c_1 r^\al x)^{-2}$ is decreasing for $x$ large enough, we compare the sum to an integral:
\[
\sum_{j=1}^\infty \frac{j^\nu}{(1+ c_1 r^\al j)^2}
\le \frac{1^\nu}{(1+ c_1 r^\al)^2} + \int_1^\infty \frac{x^\nu}{(1+ c_1 r^\al x)^2} dx.
\]

The integral converges because $\nu<1$. Changing variables $y = c_1 r^\al x$ gives
\[
\int_1^\infty \frac{x^\nu}{(1+ c_1 r^\al x)^2} dx
= (c_1 r^\al)^{-\nu-1} \int_{c_1 r^\al}^\infty \frac{y^\nu}{(1+y)^2} dy
\le C r^{-\al(\nu+1)}.
\]

Thus,
\[
\sum_{j=1}^\infty \frac{\lambda_j^\nu}{(1+\lambda_j r^\al)^2} \le C r^{-\al(\nu+1)}.
\]

Substituting back,
\[
\|\cS_\al(r)\|_{\cL_2}^2 \le C r^{2\al-2} r^{-\al(\nu+1)} = C r^{\al(1-\nu)-2}.
\]
\end{proof}

\begin{rmk}
The condition $\nu<1$ is sharp in dimension $2$. For $\nu=1$, the integral $\int_0^\infty y (1+y)^{-2} dy$ diverges logarithmically. This explains why we cannot take $\nu=1$ in the regularity analysis.
The case $\nu = 1$ corresponds to the critical regularity for the
Navier-Stokes equations in dimension $2$: the divergence of the integral
reflects the fact that the bilinear term $B(u)$ cannot be controlled
in $\dot H^1$ without additional assumptions.
This justifies our restriction $\nu < 1$ in the regularity
analysis of the stochastic convolution.
\end{rmk}

\subsection{$L^2$ estimate of the stochastic convolution}

We now estimate the $L^2$ norm of $Z_k(t)$ in $\dot{H}^\nu$.

\begin{lemma}[$L^2$ estimate of $Z_k$]\label{lem:Zk_L2}
Under the assumptions of Lemma~\ref{lem:HS} and the condition $\al(1-\nu)+2H>2$, we have
\[
\|Z_k(t)\|_{L^2(\Omega;\dot{H}^\nu)}^2 \le C t^{\al(1-\nu)+2H-2}.
\]
\end{lemma}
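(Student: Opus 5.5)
The plan is to combine the isometry of Lemma~\ref{lem:isometry_Zk} with the Hilbert-Schmidt bound of Lemma~\ref{lem:HS} and then finish with a Hardy--Littlewood--Sobolev type double-integral computation. Take $\phi(s)=\cS_\al(t-s)$ on $[0,t]$. Lemma~\ref{lem:isometry_Zk} then gives
\[
\E\|Z_k(t)\|_\nu^2 = H(2H-1)\int_0^t\int_0^t \langle \cS_\al(t-u),\cS_\al(t-v)\rangle_{\cL_2}|u-v|^{2H-2}\,du\,dv,
\]
provided the right-hand side is finite. Finiteness will come out of the computation below, and it also justifies applying the lemma. After the change of variables $r=t-u$, $r'=t-v$, the integral becomes $\int_0^t\int_0^t \langle \cS_\al(r),\cS_\al(r')\rangle_{\cL_2}|r-r'|^{2H-2}\,dr\,dr'$.

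To bound the inner product, apply Cauchy--Schwarz in $\cL_2(L^2,\dot H^\nu)$ and then Lemma~\ref{lem:HS}:
\[
|\langle \cS_\al(r),\cS_\al(r')\rangle_{\cL_2}|\le \|\cS_\al(r)\|_{\cL_2}\|\cS_\al(r')\|_{\cL_2}\le C\, r^{\beta}(r')^{\beta},\qquad \beta:=\frac{\al(1-\nu)-2}{2}.
\]
It is cleaner to work eigenmode by eigenmode, since $\cS_\al$ is diagonal and the inner product equals $\sum_j \lambda_j^\nu s_{\al,j}(r)s_{\al,j}(r')$, which is nonnegative. Either route leads to the same scalar bound.

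It then remains to estimate
\[
I(t)=\int_0^t\int_0^t r^{\beta}(r')^{\beta}|r-r'|^{2H-2}\,dr\,dr'.
\]
Substituting $r=tx$, $r'=ty$ gives the scaling $I(t)=t^{2\beta+2H}I(1)=t^{\al(1-\nu)+2H-2}I(1)$, which is exactly the exponent claimed. The whole proof therefore reduces to showing $I(1)<\infty$.

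I expect this finiteness to be the main obstacle. Note that $\beta>-1$ holds always, since $\al(1-\nu)>0$. The hypothesis $\al(1-\nu)+2H>2$ is exactly $2\beta+2H>0$, i.e.\ positivity of the total homogeneity degree $2\beta+2H-2>-2$ of the integrand, which is the condition for integrability near the origin. Near the diagonal, $|x-y|^{2H-2}$ is integrable because $2H-2>-1$. Near the axes, $x^\beta$ is integrable because $\beta>-1$. To make this rigorous I would first use symmetry to restrict to $y<x$, then substitute $y=xw$ with $w\in(0,1)$. This gives
\[
I(1)=2\int_0^1 x^{2\beta+2H-1}\,dx\int_0^1 w^{\beta}(1-w)^{2H-2}\,dw=\frac{2B(\beta+1,2H-1)}{2\beta+2H}.
\]
This is finite precisely under $\beta>-1$, $H>1/2$ and $2\beta+2H>0$, the last being the stated condition. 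Collecting the constants yields $\|Z_k(t)\|^2_{L^2(\Omega;\dot H^\nu)}\le C\,t^{\al(1-\nu)+2H-2}$.
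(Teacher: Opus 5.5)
Your proof is correct and follows the paper's argument step for step: the isometry of Lemma~\ref{lem:isometry_Zk}, Cauchy--Schwarz combined with the Hilbert--Schmidt bound of Lemma~\ref{lem:HS}, reflection and scaling to extract $t^{\al(1-\nu)+2H-2}$, and a symmetry-plus-Beta-function evaluation of the remaining double integral. Your value $I(1)=2B(\beta+1,2H-1)/(2\beta+2H)$ is in fact the correct one. The paper's written computation drops a factor $\xi^a$ and integrates $\xi^{a+2H-1}$ instead of $\xi^{2a+2H-1}$, which obscures that $\al(1-\nu)+2H>2$ is exactly the condition for the outer integral to converge, as you observe.
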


\begin{proof}
By Lemma~\ref{lem:isometry_Zk} applied with $\phi(s) = \cS_\al(t-s)$, which is deterministic and belongs to $\cL_2(L^2,\dot{H}^\nu)$, we have
\[
\|Z_k(t)\|_{L^2}^2 = H(2H-1) \int_0^t \int_0^t \langle \cS_\al(t-u), \cS_\al(t-v) \rangle_{\cL_2} |u-v|^{2H-2} du dv.
\]

By the Cauchy-Schwarz inequality,
\[
|\langle \cS_\al(t-u), \cS_\al(t-v) \rangle_{\cL_2}| \le \|\cS_\al(t-u)\|_{\cL_2} \|\cS_\al(t-v)\|_{\cL_2}.
\]

Applying Lemma~\ref{lem:HS},
\[
\|\cS_\al(t-u)\|_{\cL_2} \le C (t-u)^{\frac{\al(1-\nu)-2}{2}}, \qquad
\|\cS_\al(t-v)\|_{\cL_2} \le C (t-v)^{\frac{\al(1-\nu)-2}{2}}.
\]

Hence,
\[
\|Z_k(t)\|_{L^2}^2 \le C \int_0^t \int_0^t (t-u)^a (t-v)^a |u-v|^{2H-2} du dv,
\]
where $a = \frac{\al(1-\nu)-2}{2}$. Change variables $u = t-x$, $v = t-y$ to obtain
\[
\|Z_k(t)\|_{L^2}^2 \le C \int_0^t \int_0^t x^a y^a |x-y|^{2H-2} dx dy.
\]

Now set $x = t\xi$, $y = t\eta$ with $\xi,\eta\in[0,1]$. Then $dx\,dy = t^2 d\xi d\eta$ and
\[
x^a y^a |x-y|^{2H-2} = t^{2a} \xi^a \eta^a \cdot t^{2H-2} |\xi-\eta|^{2H-2} = t^{2a+2H-2} \xi^a \eta^a |\xi-\eta|^{2H-2}.
\]

Thus,
\[
\|Z_k(t)\|_{L^2}^2 \le C t^{2a+2H} \int_0^1 \int_0^1 \xi^a \eta^a |\xi-\eta|^{2H-2} d\xi d\eta.
\]

The double integral $I := \int_0^1\int_0^1 \xi^a \eta^a |\xi-\eta|^{2H-2}
d\xi\,d\eta$ is finite. To see this, write
\[
I = \int_0^1\int_0^1 \xi^a \eta^a |\xi-\eta|^{2H-2} d\xi\,d\eta
\le 2\int_0^1\int_0^\xi \xi^a \eta^a (\xi-\eta)^{2H-2} d\eta\, d\xi
\]
by symmetry. For fixed $\xi\in(0,1]$, the inner integral equals
\[
\xi^{a+2H-1}\int_0^1 (1-t)^{2H-2} t^a\,dt
= \xi^{a+2H-1} B(a+1,\,2H-1),
\]
where $B$ is the Euler Beta function, obtained by the substitution
$\eta=\xi t$. This converges if and only if $a+1>0$ and $2H-1>0$,
i.e., $a>-1$ and $H>1/2$. Both hold: $a=\frac{\al(1-\nu)-2}{2}>-1$
because $\al(1-\nu)>0$, and $H>1/2$ by assumption. Hence
\[
I \le 2\,B(a+1,2H-1)\int_0^1 \xi^{a+2H-1}d\xi
= \frac{2\,B(a+1,2H-1)}{a+2H},
\]
which is finite since $a+2H = \frac{\al(1-\nu)-2}{2}+2H
= \frac{\al(1-\nu)+2H-2}{2}+H > 0$ by the condition
$\al(1-\nu)+2H>2$.
\end{proof}

\begin{rmk}
The condition $\alpha(1-\nu)+2H>2$ also guarantees that the function $s \mapsto \cS_\alpha(t-s)$ belongs to the covariance Hilbert space $\cH$ of the Hermite process, ensuring that the stochastic convolution is well defined in $L^2(\Omega;\dot H^\nu)$.
\end{rmk}

\subsection{$L^p$ regularity of the stochastic convolution}

We now combine the $L^2$ estimate with hypercontractivity to obtain $L^p$ estimates.

\begin{thm}[$L^p$ regularity of $Z_k$]\label{thm:reg1p}
Assume $0<\al<1$, $H\in(1/2,1)$, $0\le\nu<1$, and $\al(1-\nu)+2H>2$. Then for any $p\ge2$ and $t\in[0,T]$,
\[
\|Z_k(t)\|_{L^p(\Omega;\dot{H}^\nu)} \le C_{p,k} \, t^{\frac{\al(1-\nu)+2H-2}{2}},
\]
where $C_{p,k} = (p-1)^{k/2} C_0$ and $C_0$ depends only on $\al$, $H$, $\nu$, and $D$.
\end{thm}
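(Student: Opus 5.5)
The estimate follows by combining the $L^2$ bound of Lemma~\ref{lem:Zk_L2} with hypercontractivity. The one point needing care is that $Z_k(t)$ is $\dot{H}^\nu$-valued, so Lemma~\ref{lem:hyper} cannot be applied verbatim to $\|Z_k(t)\|_\nu$. The case $t=0$ is trivial since $Z_k(0)=0$, so assume $t>0$.

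First, I justify that $Z_k(t)$ is a well-defined $\dot{H}^\nu$-valued element of the $k$-th chaos. By Lemma~\ref{lem:HS}, the integrand $\phi(s)=\cS_\al(t-s)$ satisfies $\|\phi(s)\|_{\cL_2(L^2,\dot{H}^\nu)}\le C(t-s)^{a}$ with $a=\frac{\al(1-\nu)-2}{2}>-1$. The Beta-function computation in the proof of Lemma~\ref{lem:Zk_L2} then shows that the double integral in Lemma~\ref{lem:isometry_Zk} is finite. Hence $Z_k(t)=\sum_j \langle Z_k(t),\lambda_j^{-\nu/2}e_j\rangle_\nu\,\lambda_j^{-\nu/2}e_j$, and each scalar coordinate $F_j:=\lambda_j^{\nu/2}\langle Z_k(t),e_j\rangle$ is a real Wiener integral of a deterministic kernel. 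By the proof of Lemma~\ref{lem:hyper_estimate}, each $F_j$ lies in the $k$-th Wiener chaos.

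Second, I pass from the coordinates to the norm. Since $\|Z_k(t)\|_\nu^2=\sum_j F_j^2$, for $p\ge2$ Minkowski's inequality in $L^{p/2}(\Omega)$ gives
\[
\|Z_k(t)\|_{L^p(\Omega;\dot{H}^\nu)}^2=\Big\|\sum_j F_j^2\Big\|_{L^{p/2}(\Omega)}\le \sum_j \|F_j\|_{L^p(\Omega)}^2 .
\]
Applying Lemma~\ref{lem:hyper} to each $F_j$ bounds the right-hand side by $(p-1)^{k}\sum_j\|F_j\|_{L^2(\Omega)}^2$. This sum equals $(p-1)^k\|Z_k(t)\|_{L^2(\Omega;\dot{H}^\nu)}^2$ by monotone convergence. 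This is the step I regard as the main obstacle: it makes rigorous the heuristic inequality stated at the end of Section~\ref{sec:wiener}. The Minkowski trick sidesteps any vector-valued hypercontractivity theorem. As an alternative, one could invoke hypercontractivity for Hilbert-valued chaos directly; this is available because the Ornstein--Uhlenbeck semigroup is a contraction on $L^p(\Omega;V)$ for Hilbert $V$.

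Finally, insert Lemma~\ref{lem:Zk_L2}, which gives $\|Z_k(t)\|_{L^2(\Omega;\dot{H}^\nu)}^2\le C\,t^{\al(1-\nu)+2H-2}$. Taking square roots yields
\[
\|Z_k(t)\|_{L^p(\Omega;\dot{H}^\nu)}\le (p-1)^{k/2}C^{1/2}\,t^{\frac{\al(1-\nu)+2H-2}{2}}.
\]
Set $C_0=C^{1/2}$. Tracing the constants, $C_0$ collects the constant of Lemma~\ref{lem:HS}, the factor $H(2H-1)$, and the quantity $2B(a+1,2H-1)/(a+2H)$. It therefore depends only on $\al,H,\nu,D$, not on $p$, $k$ or $t$.
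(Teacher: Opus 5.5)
Your proof is correct and takes the paper's route: hypercontractivity for the $k$-th chaos variable $Z_k(t)$ combined with the $L^2$ bound of Lemma~\ref{lem:Zk_L2}, with your coordinate-wise Minkowski argument in $L^{p/2}(\Omega)$ justifying the step the paper leaves implicit, namely applying the scalar Lemma~\ref{lem:hyper} to an $\dot{H}^\nu$-valued variable. One small correction to your constant tracing: the outer integral in the proof of Lemma~\ref{lem:Zk_L2} is really $\int_0^1 \xi^{2a+2H-1}\,d\xi = 1/(2a+2H)$, since the paper drops the outer factor $\xi^a$, so the quantity entering $C_0$ is $2B(a+1,2H-1)/(2a+2H)$, which is finite precisely because $2a+2H=\al(1-\nu)+2H-2>0$.
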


\begin{proof}
Since $Z_k(t)$ belongs to the $k$-th Wiener chaos, hypercontractivity (Lemma~\ref{lem:hyper}) gives
\[
\|Z_k(t)\|_{L^p} \le (p-1)^{k/2} \|Z_k(t)\|_{L^2}.
\]
The $L^2$ bound is provided by Lemma~\ref{lem:Zk_L2}. Taking square roots completes the proof.
\end{proof}

\subsection{Increment regularity}

For the H\"older regularity of the solution, we also need estimates for time increments of $Z_k$.

\begin{thm}[Increment regularity of $Z_k$]\label{thm:reg2p}
Under the same assumptions as in Theorem~\ref{thm:reg1p}, for any $0\le t_1<t_2\le T$ and any $p\ge2$,
\[
\|Z_k(t_2) - Z_k(t_1)\|_{L^p(\Omega;\dot{H}^\nu)} \le C_{p,k} (t_2-t_1)^{\gamma},
\]
where
\[
\gamma = \min\left\{ \frac{2-(2-\nu)\al}{2},\ \frac{\al(1-\nu)+2H-2}{2} \right\}.
\]
\end{thm}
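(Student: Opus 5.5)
Since the increment $Z_k(t_2)-Z_k(t_1)$ is a deterministic-integrand Wiener integral, it belongs to the $k$-th Wiener chaos. Lemma~\ref{lem:hyper} therefore reduces the claim to the $L^2(\Omega;\dot{H}^\nu)$ bound
\[
\E\|Z_k(t_2)-Z_k(t_1)\|_\nu^2\le C (t_2-t_1)^{2\gamma}.
\]
The constant is then $C_{p,k}=(p-1)^{k/2}C_0$. Setting $h=t_2-t_1$, I would use the standard splitting
\[
Z_k(t_2)-Z_k(t_1)=\int_{t_1}^{t_2}\cS_\al(t_2-s)\,dZ_H^k(s)+\int_0^{t_1}\bigl[\cS_\al(t_2-s)-\cS_\al(t_1-s)\bigr]dZ_H^k(s)=:J_1+J_2,
\]
and estimate each second moment via Lemma~\ref{lem:isometry_Zk}, bounding the $\cL_2$ inner product by Cauchy--Schwarz as in Lemma~\ref{lem:Zk_L2}.

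For $J_1$, shift time by $s\mapsto s-t_1$ (the Hermite process has stationary increments, and the covariance kernel $|u-v|^{2H-2}$ is translation invariant). The computation is then literally that of Lemma~\ref{lem:Zk_L2} on an interval of length $h$. It gives $\E\|J_1\|_\nu^2\le C h^{\al(1-\nu)+2H-2}$, which is the second exponent in $\gamma$.

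For $J_2$, I need a Hilbert--Schmidt bound on $\cS_\al(r+h)-\cS_\al(r)$, where $r=t_1-s$. The operator-norm bound of Lemma~\ref{lem:Salpha_basic} is not enough, since $\cS_\al$ is not Hilbert--Schmidt uniformly. I would redo the spectral computation of Lemma~\ref{lem:HS}. The eigenvalues are $g_j(r)=r^{\al-1}E_{\al,\al}(-\la_j r^\al)$, with derivative bound $|g_j'(r)|\le C r^{\al-2}(1+\la_j r^\al)^{-1}$. Interpolating between $|g_j(r+h)-g_j(r)|\le h\sup|g_j'|$ and $|g_j(r+h)|+|g_j(r)|$ gives, for $\theta\in[0,1]$,
\[
|g_j(r+h)-g_j(r)|\le C h^\theta r^{\al-1-\theta}(1+\la_j r^\al)^{-1}.
\]
Summing $\la_j^\nu|\cdot|^2$ with Weyl's law exactly as in Lemma~\ref{lem:HS} then yields
\[
\|\cS_\al(r+h)-\cS_\al(r)\|_{\cL_2}^2\le C h^{2\theta} r^{\al(1-\nu)-2-2\theta}.
\]
Inserting this into the double integral over $[0,t_1]^2$, the Beta-function argument of Lemma~\ref{lem:Zk_L2} requires $a_\theta:=\frac{\al(1-\nu)-2}{2}-\theta>-1$ and $2a_\theta+2H>0$. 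That is, $\theta<\frac{\al(1-\nu)+2H-2}{2}$ (the second condition is binding). Under this constraint $\E\|J_2\|_\nu^2\le C h^{2\theta}T^{2a_\theta+2H}$.

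The main obstacle is the endpoint: $\theta$ can only approach $\frac{\al(1-\nu)+2H-2}{2}$, and the Beta integral blows up there. To recover the stated exponent, which includes the term $\frac{2-(2-\nu)\al}{2}$ coming from Lemma~\ref{lem:Salpha_basic}, I would first try splitting the $r$-integration into the regions $r<h$ and $r\ge h$. On $r<h$ I would bound each term separately, $\|\cS_\al(r+h)\|+\|\cS_\al(r)\|$, which gives $h^{\al(1-\nu)+2H-2}$ as for $J_1$. On $r\ge h$ I would use the derivative bound with $\theta=1$, possibly combined with the operator-norm increment estimate of Lemma~\ref{lem:Salpha_basic}, whose exponent is $\frac{2-(2-\nu)\al}{2}$. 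This should give $h^{2\min\{\cdot,\cdot\}}$ up to constants depending on $T$. If a logarithmic loss appears at the critical balance, I would accept any $\gamma'<\gamma$ and note it. Combining $J_1$ and $J_2$ and applying hypercontractivity completes the argument.
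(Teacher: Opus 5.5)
Your reduction to $L^2$ via hypercontractivity, your splitting, and your treatment of the piece over $[t_1,t_2]$ (translation invariance plus Lemma~\ref{lem:Zk_L2} on an interval of length $h$) coincide with the paper's. For the piece over $[0,t_1]$ you take a genuinely different route, and yours is the sound one.

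The paper inserts the operator-norm increment bound of Lemma~\ref{lem:Salpha_basic} directly into the isometry of Lemma~\ref{lem:isometry_Zk}. That fails for two reasons. First, the isometry involves the $\cL_2(L^2,\dot{H}^\nu)$ inner product, which no operator-norm bound controls. Second, the increment bound itself cannot be uniform as the smaller time tends to $0$, because $\|\cS_\al(t)\|_{\cL(\dot{H}^0,\dot{H}^\nu)}\ge \la_1^{\nu/2}t^{\al-1}|E_{\al,\al}(-\la_1 t^\al)|\to\infty$ as $t\to 0$.

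Your spectral argument avoids both problems. The derivative bound you use is correct, since $g_j'(r)=r^{\al-2}E_{\al,\al-1}(-\la_j r^\al)$ and $|E_{\al,\al-1}(-x)|\le C(1+x)^{-1}$ for $x\ge0$. The Weyl summation of Lemma~\ref{lem:HS} then gives a genuine Hilbert--Schmidt bound for $\cS_\al(r+h)-\cS_\al(r)$. When $\frac{2-(2-\nu)\al}{2}<\frac{\al(1-\nu)+2H-2}{2}$, choosing $\theta=\gamma$ already finishes the proof.

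The endpoint case, which you only sketch, goes through without any logarithmic loss, so the fallback to $\gamma'<\gamma$ is unnecessary. Put $a=\frac{\al(1-\nu)-2}{2}$ and $\beta_0=2a+2H=\al(1-\nu)+2H-2$. Your $\theta=0$ and $\theta=1$ bounds combine into the single pointwise estimate $\|\cS_\al(r+h)-\cS_\al(r)\|_{\cL_2}\le C\min\{r^a,hr^{a-1}\}=Ch^a\phi(r/h)$, where $\phi(\xi)=\min\{\xi^a,\xi^{a-1}\}$. Substituting $x=h\xi$, $y=h\eta$ gives
\[
\E\|J_2\|_\nu^2\le C h^{\beta_0}\int_0^\infty\!\!\int_0^\infty \phi(\xi)\phi(\eta)|\xi-\eta|^{2H-2}\,d\xi\,d\eta .
\]
This integral is finite:
\begin{itemize}
\item near the origin, because $a>-1$ and $\beta_0>0$;
\item at infinity, because $\phi(\xi)=\xi^{a-1}$ there and $\beta_0-2<0$;
\item in the mixed region, because it needs $a+2H<2$, which holds since $a<-\frac12$.
\end{itemize}
This single scaling also handles the cross terms between $r<h$ and $r\ge h$, which your region-by-region description leaves out.

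The outcome is $\|Z_k(t_2)-Z_k(t_1)\|_{L^p(\Omega;\dot{H}^\nu)}\le C_{p,k}h^{\beta_0/2}$ uniformly in $t_1$. Since $\gamma\le\beta_0/2$ and $h\le T$, the stated bound follows. So you never need to recover the exponent $\frac{2-(2-\nu)\al}{2}$, which enters only through the minimum. You should also drop the idea of combining with the operator-norm estimate of Lemma~\ref{lem:Salpha_basic}, since that estimate cannot feed into a Hilbert--Schmidt computation.
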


\begin{proof}
Write the increment as $Z_k(t_2)-Z_k(t_1) = I_1 + I_2$, where
\[
I_1 = \int_0^{t_1} [\cS_\al(t_2-s) - \cS_\al(t_1-s)] \, dZ_H^k(s), \qquad
I_2 = \int_{t_1}^{t_2} \cS_\al(t_2-s) \, dZ_H^k(s).
\]

For $I_2$, note that by stationarity of the increments of $Z_H^k$, it has the same distribution as $Z_k(t_2-t_1)$. Hence, by Theorem~\ref{thm:reg1p},
\[
\|I_2\|_{L^p} \le C_{p,k} (t_2-t_1)^{\frac{\al(1-\nu)+2H-2}{2}}.
\]

For $I_1$, we use Lemma~\ref{lem:Salpha_basic} to bound the operator norm difference. Since $[\cS_\al(t_2-\cdot)-\cS_\al(t_1-\cdot)]$ is a deterministic $\cL(L^2,\dot{H}^\nu)$-valued function, we may write, by linearity of the Wiener integral and Lemma~\ref{lem:isometry_Zk},
\[
\|I_1\|_{L^2(\Omega;\dot{H}^\nu)}^2 = H(2H-1) \int_0^{t_1}\int_0^{t_1} \langle [\cS_\al(t_2-u)-\cS_\al(t_1-u)], [\cS_\al(t_2-v)-\cS_\al(t_1-v)] \rangle_{\cL_2} |u-v|^{2H-2} du dv.
\]
By Cauchy-Schwarz and Lemma~\ref{lem:Salpha_basic}, each factor satisfies
\[
\|\cS_\al(t_2-u)-\cS_\al(t_1-u)\|_{\cL(L^2,\dot{H}^\nu)} \le C (t_2-t_1)^{\frac{2-(2-\nu)\al}{2}}.
\]
Then, by the isometry and hypercontractivity,
\[
\|I_1\|_{L^p} \le C_{p,k} (t_2-t_1)^{\frac{2-(2-\nu)\al}{2}} \left( \int_0^{t_1}\int_0^{t_1} |u-v|^{2H-2} du dv \right)^{1/2} \le C_{p,k} (t_2-t_1)^{\frac{2-(2-\nu)\al}{2}}.
\]

Taking the minimum of the two exponents gives the desired estimate.
\end{proof}

\begin{rmk}
The exponent $\gamma$ reflects two competing effects. The term $\frac{2-(2-\nu)\al}{2}$ comes from the regularity of the Mittag-Leffler operator, while $\frac{\al(1-\nu)+2H-2}{2}$ comes from the Hermite noise. Depending on the parameters, one of these may be smaller and thus dominate the regularity.
\end{rmk}

\section{Existence and uniqueness of mild solutions (Proof of Theorem \ref{thm:exist})}
\label{sec:existence}

In this section we prove Theorem~\ref{thm:exist}. We first introduce a weighted function space that handles the singularity of the Mittag-Leffler operator at $t=0$. Then we establish estimates for the nonlinear term $B(u)$ and the external force $f(u)$. Finally, we construct a fixed-point argument that yields a unique local mild solution.

\subsection{Weighted function space}

Recall that $\cS_\al(t)$ behaves like $t^{\frac{(2-\nu)\al-2}{2}}$ as $t\to0$, which is singular when $(2-\nu)\al-2 < 0$, i.e., when $\nu < 2 - 2/\al$. Since $\al<1$, this singularity is always present for $\nu$ not too large. To compensate, we work in a space with a weight that absorbs this singularity.

Let $\nu$ satisfy the conditions in Theorem~\ref{thm:exist}. Define the space
\[
\cW_T = \left\{ u : [0,T]\times\Omega \to \dot{H}^\nu \text{ measurable} : \|u\|_{\cW_T} < \infty \right\},
\]
where
\[
\|u\|_{\cW_T}^p = \E \sup_{t\in[0,T]} \|u(t)\|_\nu^p + \E \sup_{t\in(0,T]} t^{\frac{p\al(\nu+1)}{2}} \|u(t)\|_{\nu+1}^p.
\]

The weight $t^{\frac{\al(\nu+1)}{2}}$ is chosen because, from Lemma~\ref{lem:MittagLeffler_estimates},
\[
\|E_\al(t)u_0\|_{\nu+1} \le C t^{-\frac{\al(\nu+1)}{2}} \|u_0\|,
\]
so the product $t^{\frac{\al(\nu+1)}{2}}\|E_\al(t)u_0\|_{\nu+1}$ remains bounded near $t=0$. The condition $\al(\nu+1)<1$ ensures that the weight is integrable in time, which is needed for the fixed-point argument.

\begin{lemma}\label{lem:W_complete}
The space $\cW_T$ equipped with the norm $\|\cdot\|_{\cW_T}$ is a Banach space.
\end{lemma}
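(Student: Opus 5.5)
The plan is the standard completeness argument for Bochner-type spaces of processes, with the supremum in time handled pathwise. First, identify elements of $\cW_T$ that agree $\Prob$-a.s. for all $t$ (processes that are indistinguishable), so that $\|\cdot\|_{\cW_T}$ is a genuine norm. The norm properties are then routine: homogeneity is clear, and the triangle inequality follows from Minkowski's inequality in $L^p(\Omega)$ applied to each of the random variables
\[
X(u) = \sup_{t\in[0,T]} \|u(t)\|_\nu, \qquad Y(u) = \sup_{t\in(0,T]} t^{\frac{\al(\nu+1)}{2}}\|u(t)\|_{\nu+1}.
\]
To be careful I would work with the equivalent norm $\|u\|_{\cW_T} := \|X(u)\|_{L^p(\Omega)} + \|Y(u)\|_{L^p(\Omega)}$, which is comparable to the $p$-th root of the given expression up to a factor $2^{1-1/p}$. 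Note that $X$ and $Y$ are subadditive, since pointwise in $\omega$ they are suprema of seminorms. Measurability of the suprema is ensured by requiring (as is implicit for mild solutions) that paths be continuous in $\dot H^\nu$ and that $t\mapsto \|u(t)\|_{\nu+1}$ be lower semicontinuous; then the suprema may be taken over rational $t$.

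For completeness, take a Cauchy sequence $(u_n)$ and extract a subsequence with $\|u_{n_{j+1}}-u_{n_j}\|_{\cW_T}\le 2^{-j}$. Then
\[
\E\Bigl[\sum_j X(u_{n_{j+1}}-u_{n_j}) + Y(u_{n_{j+1}}-u_{n_j})\Bigr] <\infty
\]
by Minkowski's inequality and monotone convergence, so the series converges almost surely. Hence for a.e.\ $\omega$ the sequence $u_{n_j}(\cdot,\omega)$ is Cauchy in the deterministic Banach space of functions $v$ with
\[
\sup_{t\in[0,T]}\|v(t)\|_\nu + \sup_{t\in(0,T]} t^{\frac{\al(\nu+1)}{2}}\|v(t)\|_{\nu+1}<\infty.
\]
This space is complete, being (after multiplication by the weight) a subspace of a product of $\ell^\infty$-type spaces of bounded $\dot H^\nu$- and $\dot H^{\nu+1}$-valued functions. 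It therefore has a limit $u(\cdot,\omega)$; set $u=0$ on the null set. The limit is jointly measurable as a pointwise limit, and the two components are consistent: the $\dot H^{\nu+1}$-limit at each $t>0$ coincides with the $\dot H^\nu$-limit, because $\dot H^{\nu+1}\hookrightarrow \dot H^\nu$ continuously (Proposition~\ref{prop:sobolev} or directly from $\lambda_j\ge\lambda_1>0$).

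Finally, show $u\in\cW_T$ and $u_n\to u$ in norm. By Fatou's lemma,
\[
\|u-u_{n_j}\|_{\cW_T} \le \liminf_{i}\|u_{n_i}-u_{n_j}\|_{\cW_T}\le 2^{-j+1},
\]
and a Cauchy sequence with a convergent subsequence converges. Fatou applies because $X$ and $Y$ are lower semicontinuous under pointwise-in-$t$ convergence. The only mildly delicate step is this last one, together with measurability of the suprema. I expect the main obstacle to be keeping $\dot H^{\nu+1}$-valued measurability and lower semicontinuity honest, which I would handle by representing $\|\cdot\|_{\nu+1}$ as a supremum of finitely supported spectral truncations $\bigl(\sum_{j\le N}\lambda_j^{\nu+1}|\langle\cdot,e_j\rangle|^2\bigr)^{1/2}$, each continuous on $\dot H^\nu$.
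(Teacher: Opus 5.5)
Your argument is correct, but it is organized differently from the paper's. The paper proves the lemma in one line. It identifies $\cW_T$ with the intersection of $L^p(\Omega;C([0,T];\dot H^\nu))$ and the space of $u$ with $t^{\al(\nu+1)/2}u\in L^p(\Omega;C((0,T];\dot H^{\nu+1}))$, and then invokes the principle that an intersection of Banach spaces with the sum norm is complete.

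You instead intersect at the level of deterministic paths, using the space $V$ of functions with finite weighted sup norms. You then prove completeness of the $L^p$-space over $V$ by hand, via the Riesz--Fischer subsequence argument, almost sure convergence, and Fatou.

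Your route makes explicit the two points the paper leaves implicit:
\begin{itemize}
\item The intersection principle requires the two spaces to be compatible, i.e.\ limits taken in each must coincide. You verify this through the embedding $\dot H^{\nu+1}\hookrightarrow\dot H^\nu$.
\item By working only with the scalar random variables $X(u)$ and $Y(u)$, you avoid Bochner measurability issues in the non-separable space of bounded continuous $\dot H^{\nu+1}$-valued functions on $(0,T]$.
\end{itemize}
Your space also requires only $\dot H^\nu$-continuity, not continuity in $\dot H^{\nu+1}$ on $(0,T]$. This is closer to the definition of $\cW_T$ as displayed, and both choices give a Banach space. What the paper's route buys is brevity.

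Two small remarks on your write-up. First, your lower semicontinuity hypothesis on $t\mapsto\|u(t)\|_{\nu+1}$ is redundant: as your own spectral-truncation remark shows, it follows automatically from $\dot H^\nu$-continuity. Second, you should state explicitly that the limit $u$ has continuous $\dot H^\nu$ paths, which holds because it is a uniform limit in $\dot H^\nu$. This is what makes $X(u-u_{n_j})$ measurable and keeps $u$ in the class on which your norm is defined.
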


\begin{proof}
The space is the intersection of two Banach spaces: $L^p(\Omega; C([0,T];\dot{H}^\nu))$ and the space of functions such that $t^{\frac{\al(\nu+1)}{2}}u(t) \in L^p(\Omega; C((0,T];\dot{H}^{\nu+1}))$. The intersection of Banach spaces with the sum norm is complete.
\end{proof}

\subsection{Estimates for the nonlinear term and the external force}

We recall the classical estimates for the bilinear term $B(u,v) = -P_H[(u\cdot\nabla)v]$.

\begin{lemma}\label{lem:B_estimates}
For $\nu>0$, there exists a constant $C>0$ such that
\[
\|B(u,v)\|_\nu \le C \|u\|_{\nu+1} \|v\|_{\nu+1}.
\]
Moreover, for the lower norm,
\[
\|B(u,v)\|_{\nu-1} \le C \|u\|_\nu \|v\|_\nu.
\]
\end{lemma}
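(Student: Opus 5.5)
The plan is to treat the two inequalities separately. Both go through the trilinear form $b(u,v,w)=\langle (u\cdot\nabla)v,w\rangle$, fractional Sobolev embeddings in $d=2$, and the boundedness of $P_H$ together with the identification of $\dot H^s$ with closed subspaces of $H^s(D)^2$.

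\textbf{First estimate.} I will bound $\|(u\cdot\nabla)v\|_{H^\nu(D)}$ and then use that $P_H$ maps $H^\nu(D)^2$ boundedly into $\dot H^\nu$. For $0<\nu<\tfrac12$ no boundary compatibility enters, so $\dot H^\nu=H^\nu\cap L^2_\sigma$ with equivalent norms. For larger $\nu$ I would argue with the Stokes regularity theory of \cite{temam1977navier} to the same effect.

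For the product I use a fractional Leibniz (Kato--Ponce type) inequality on the bounded smooth domain, obtained by extension to $\R^2$:
\[
\|fg\|_{H^\nu}\le C\bigl(\|f\|_{L^\infty}\|g\|_{H^\nu}+\|f\|_{W^{\nu,2/\nu}}\|g\|_{L^{2/(1-\nu)}}\bigr),
\]
applied with $f=u_i$ and $g=\partial_iv$. The ingredients are then:
\begin{itemize}
\item Proposition~\ref{prop:sobolev} gives $\|u\|_{L^\infty}\le C\|u\|_{\nu+1}$, since $\nu+1>1$.
\item Trivially $\|\partial_iv\|_{H^\nu}\le C\|v\|_{\nu+1}$.
\item The embedding $H^{\nu+1}\hookrightarrow W^{\nu,2/\nu}$ holds, since both have the same Sobolev index $1$ in $d=2$.
\item The embedding $H^\nu\hookrightarrow L^{2/(1-\nu)}$ holds.
\end{itemize}
Combining these gives $\|B(u,v)\|_\nu\le C\|u\|_{\nu+1}\|v\|_{\nu+1}$.

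\textbf{Second estimate.} I will argue by duality. Since $\dot H^{\nu-1}$ is the dual of $\dot H^{1-\nu}$ (for $\nu<1$; for $\nu\ge1$ it is a genuine Sobolev space and the same computation is done directly), it suffices to show
\[
|\langle B(u,v),w\rangle|\le C\|u\|_\nu\|v\|_\nu\|w\|_{1-\nu}
\]
for smooth divergence-free $w$.

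Because $\nabla\cdot u=0$ and $w$ vanishes on $\partial D$, I integrate by parts to move the derivative onto $w$:
\[
\langle B(u,v),w\rangle=-b(u,v,w)=b(u,w,v)=\int_D (u\otimes v):\nabla w\,dx .
\]
I then estimate this with H\"older's inequality, using the embeddings
\[
\dot H^\nu\hookrightarrow L^{2/(1-\nu)}\quad\text{for } u \text{ and } v,
\]
and, for $\nabla w$, the embedding of $\dot H^{1-\nu}$ into $W^{1,q}$ or its negative-order analogue. Since $\nabla w$ only lies in $H^{-\nu}$, I expect I will have to put $u\otimes v$ into $H^{\nu}$ rather than a Lebesgue space. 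Concretely, I plan to use the paraproduct bound $\|u\otimes v\|_{H^{s}}\lesssim\|u\|_{H^\nu}\|v\|_{H^\nu}$ for the appropriate $s$, and then pair $H^{s}$ against $H^{-s}$.

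\textbf{Main obstacle.} The hard step is matching the regularity indices in this pairing. In two dimensions the product $H^\nu\cdot H^\nu$ lands only in $H^{2\nu-1}$ when $\nu<1$, whereas the duality with $\nabla w\in H^{-\nu}$ asks for $H^\nu$. The estimate therefore closes directly for $\nu\ge1$, where $H^\nu$ is close to an algebra, by the Moser inequality $\|fg\|_{H^\nu}\lesssim\|f\|_{L^\infty}\|g\|_{H^\nu}+\|f\|_{H^\nu}\|g\|_{L^\infty}$. For $0<\nu<1$ I would first try to exploit the divergence-free structure, namely the cancellation $b(u,v,v)=0$ together with a commutator estimate, to gain the missing $1-\nu$ derivatives. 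If that fails, I would record precisely which range of $\nu$ the inequality actually covers, since this bookkeeping is where the argument is most likely to break.
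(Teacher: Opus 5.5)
\paragraph{First inequality.} Your argument is sound for $0<\nu<\tfrac12$, and it is more careful than the paper's. The paper bounds $\|P_H[(u\cdot\nabla)v]\|_\nu$ by $C\|u\|_{L^\infty}\|\nabla v\|_\nu$. That is not a valid multiplier estimate for $\nu>0$, since multiplication by a merely bounded function does not preserve $H^\nu$. Your second Kato--Ponce term $\|u\|_{W^{\nu,2/\nu}}\|\nabla v\|_{L^{2/(1-\nu)}}$ is exactly what repairs it. (Minor slip: the Sobolev indices of $H^{\nu+1}$ and $W^{\nu,2/\nu}$ are $\nu$ and $0$, not both $1$, but the embedding still holds.)

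Your deferral of larger $\nu$ to Stokes regularity cannot work, however. For $\nu>\tfrac12$, membership in $\dot H^\nu=D(A^{\nu/2})$ forces a vanishing trace on $\partial D$. The field $(u\cdot\nabla)v$ does vanish there, but $P_H$ subtracts a Neumann gradient $\nabla p$ whose tangential trace is generically nonzero. So $B(u,v)\notin\dot H^\nu$ in general, and the first bound must be restricted to $0<\nu<\tfrac12$, which is the range of Theorem~\ref{thm:exist}.

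\paragraph{Second inequality.} This is the genuine gap, and it cannot be closed, because for $0<\nu<1$ the inequality is false. Neither the divergence-free structure nor a commutator can rescue it: the cancellation $b(u,v,v)=0$ needs the last two slots to coincide, while the test field $w$ is arbitrary.

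To see the failure, concentrate at an interior point $x_0\in D$. Pick smooth, compactly supported, divergence-free $\phi_1,\phi_2,\phi_3$ on $\R^2$ with $\int(\phi_1\cdot\nabla)\phi_2\cdot\phi_3\,dy\neq0$, and set $u_\lambda=\phi_1(\lambda(\cdot-x_0))$, $v_\lambda=\phi_2(\lambda(\cdot-x_0))$, $w_\lambda=\phi_3(\lambda(\cdot-x_0))$. Then $|\langle B(u_\lambda,v_\lambda),w_\lambda\rangle|=c\lambda^{-1}$ with $c\neq0$. Interpolating between $\|\cdot\|$ and $\|\nabla\cdot\|$ gives $\|u_\lambda\|_\nu,\|v_\lambda\|_\nu\lesssim\lambda^{\nu-1}$ and $\|w_\lambda\|_{1-\nu}\lesssim\lambda^{-\nu}$. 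Hence
\[
\frac{\|B(u_\lambda,v_\lambda)\|_{\nu-1}}{\|u_\lambda\|_\nu\,\|v_\lambda\|_\nu}\ \gtrsim\ \lambda^{1-\nu}\longrightarrow\infty\qquad(0<\nu<1).
\]
This is precisely the index mismatch you detected: $H^\nu\cdot H^\nu\subset H^{2\nu-1}$, whereas the pairing requires $H^\nu$.

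At $\nu=1$ your Moser step needs $\|u\|_{L^\infty}\lesssim\|u\|_1$, which fails in two dimensions, and the bound itself fails logarithmically there. So the second inequality can only hold for $\nu>1$, and, by the same trace obstruction as above, only while $\nu-1<\tfrac12$.

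What your duality-plus-product argument does prove, for $0\le\nu<1$, is the correctly scaled estimate
\[
\|B(u,v)\|_{\nu-1}\le C\|u\|_{(1+\nu)/2}\|v\|_{(1+\nu)/2},
\]
via the two-dimensional product rule $H^{(1+\nu)/2}\cdot H^{(1+\nu)/2}\subset H^\nu$. For $\nu=0$ this is the familiar $L^4$ bound.

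The paper offers nothing beyond a citation for this half, and never uses it. Every later application (Lemma~\ref{lem:F_estimates}, the contraction step, the H\"older estimates) invokes only the first inequality.
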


\begin{proof}
The first estimate follows from the Sobolev embedding $\dot{H}^{\nu+1} \hookrightarrow L^\infty$ (Proposition~\ref{prop:sobolev}) and the boundedness of the Helmholtz projection:
\[
\|B(u,v)\|_\nu = \|P_H[(u\cdot\nabla)v]\|_\nu \le C \|u\|_{L^\infty} \|\nabla v\|_\nu \le C \|u\|_{\nu+1} \|v\|_{\nu+1}.
\]

The second estimate is more delicate and uses the fact that $B(u,v)$ can be interpreted as a distribution in $\dot{H}^{\nu-1}$. A proof can be found in \cite[Chapter III]{temam1977navier}.
\end{proof}

For the external force $f$, we assume the following Lipschitz condition.

\begin{assumption}\label{ass:f}
The function $f: \dot{H}^\nu \to \dot{H}^\nu$ is measurable, adapted, and satisfies for all $u,v\in\dot{H}^\nu$,
\[
\|f(u) - f(v)\|_\nu \le L_f \|u-v\|_\nu, \qquad \|f(u)\|_\nu \le L_f (1 + \|u\|_\nu),
\]
for some constant $L_f>0$.
\end{assumption}

\subsection{Estimates for the deterministic integral term}

Define the operator $\Phi$ on $\cW_T$ by
\[
\Phi(u)(t) = E_\al(t) u_0 + \int_0^t \cS_\al(t-s) [B(u(s)) + f(u(s))] \, ds + Z_k(t).
\]

We first estimate the deterministic part $F(u)(t) = \int_0^t \cS_\al(t-s) [B(u(s)) + f(u(s))] \, ds$.

\begin{lemma}[Estimates for $F(u)$]\label{lem:F_estimates}
For $u\in\cW_T$, there exist constants $C_1,C_2>0$ such that
\[
\|F(u)\|_{\cW_T} \le C_1 T^{\theta_1} \|u\|_{\cW_T}^2 + C_2 T^{\theta_2} (1 + \|u\|_{\cW_T}),
\]
where $\theta_1 = \frac{\al(1-2\nu)}{2}$ and $\theta_2 = \min\left\{\frac{(2-\nu)\al}{2}, \frac{\al(1-\nu)}{2}\right\} > 0$.
\end{lemma}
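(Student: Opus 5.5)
I plan to split $F(u)=F_B(u)+F_f(u)$ with $F_B(u)(t)=\int_0^t\cS_\al(t-s)B(u(s))\,ds$ and $F_f(u)(t)=\int_0^t\cS_\al(t-s)f(u(s))\,ds$. Each piece will be estimated pathwise, for fixed $\omega$, in the two components of $\|\cdot\|_{\cW_T}$: the $\sup_t\|\cdot\|_\nu$ part and the weighted $\sup_t t^{\al(\nu+1)/2}\|\cdot\|_{\nu+1}$ part. Taking $p$-th powers and expectations comes last. Throughout, the only operator bounds used are Lemma~\ref{lem:Salpha_basic} and Lemma~\ref{lem:smoothing}, in the form $\|\cS_\al(r)\chi\|_\rho\le C r^{\al-1-\al(\rho-\mu)/2}\|\chi\|_\mu$. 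Every time integral then reduces to a Beta integral $\int_0^t (t-s)^{a}s^{b}\,ds = B(a+1,b+1)\,t^{a+b+1}$.

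\textbf{Bilinear term.} Lemma~\ref{lem:B_estimates} gives $\|B(u(s))\|_\nu\le C\|u(s)\|_{\nu+1}^2\le C s^{-\al(\nu+1)}\,\Xi(u)^2$, where $\Xi(u)=\sup_s s^{\al(\nu+1)/2}\|u(s)\|_{\nu+1}$. For the $\dot H^\nu$ component I take $\mu=\rho=\nu$, so $\|\cS_\al(t-s)\|\le C(t-s)^{\al-1}$. This leaves
\[
\int_0^t (t-s)^{\al-1}s^{-\al(\nu+1)}\,ds = C\,t^{\al-\al(\nu+1)} = C\,t^{-\al\nu},
\]
which is finite because $\al(\nu+1)<1$. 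The result is a negative power of $T$, not the $T^{\theta_1}$ claimed. To recover a positive power I would instead use the lower-norm estimate $\|B(u)\|_{\nu-1}\le C\|u\|_\nu^2$, which gives $\|\cS_\al(t-s)B\|_\nu\le C(t-s)^{\al/2-1}\|u\|_\nu^2$ and an integral of order $t^{\al/2}$. Alternatively I would interpolate between the two bounds, $\|B(u)\|_{\nu-\delta}\lesssim \|u\|_\nu^{\cdot}\|u\|_{\nu+1}^{\cdot}$, and choose $\delta$ so that the exponent becomes $\theta_1=\al(1-2\nu)/2$. This is positive exactly because $\nu<1/2$, so I expect this bookkeeping to be what produces $\theta_1$. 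For the weighted component I apply the same interpolated bound with target $\rho=\nu+1$. After multiplying by $t^{\al(\nu+1)/2}$, the Beta integral again yields a power $t^{\theta_1}$, provided both Beta parameters exceed $-1$. That holds because $\al(\nu+1)<1$ and $\nu<1/2$.

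\textbf{External force.} By Assumption~\ref{ass:f}, $\|f(u(s))\|_\nu\le L_f(1+\sup\|u\|_\nu)$. In $\dot H^\nu$ I use $\|\cS_\al(r)\|_{\cL(\dot H^\nu)}\le Cr^{\al-1}$, which gives $T^{\al}\le CT^{(2-\nu)\al/2}$ for $T\le 1$. In $\dot H^{\nu+1}$ I use Lemma~\ref{lem:smoothing} with gap $1$, which gives the kernel $(t-s)^{\al/2-1}$, integral $t^{\al/2}$, and after the weight $t^{\al/2+\al(\nu+1)/2}$. Both exponents dominate $\theta_2$ when $T\le1$, and this yields $C_2T^{\theta_2}(1+\|u\|_{\cW_T})$. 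Finally I take $\E\sup$ and use $\E\,\Xi(u)^{2p}$, controlled by $\|u\|_{\cW_T}^{2}$ in the convention of the paper's norm, to conclude.

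\textbf{Main obstacle.} The step I expect to be hardest is the bilinear term in the unweighted $\dot H^\nu$ norm. Using only the crude $\dot H^{\nu+1}$ product bound, the singularity $s^{-\al(\nu+1)}$ destroys the positive power of $T$. Getting exactly $\theta_1$ requires the right choice of intermediate Sobolev index for $B$: a two-dimensional product estimate between $\|B\|_{\nu-1}$ and $\|B\|_\nu$. I would try interpolation of the two estimates in Lemma~\ref{lem:B_estimates} first.
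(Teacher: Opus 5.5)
Your diagnosis in the ``main obstacle'' paragraph is right, and it is exactly where the paper's own proof breaks. The paper takes the direct route you reject: it bounds the integrand by $(t-s)^{\frac{(2-\nu)\al-2}{2}}s^{-\al(\nu+1)}\|u\|_{\cW_T}^2$ and asserts that the resulting Beta integral is of order $t^{\frac{\al(1-2\nu)}{2}}$. Its exponent is in fact $\frac{(2-\nu)\al-2}{2}+1-\al(\nu+1)=-\frac{3\al\nu}{2}$. Likewise, the net exponent $\frac{\al(1-\nu)}{2}$ claimed in the weighted component is really $-\al\nu$.

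Your repair still has a genuine gap. The lower-norm bound $\|B(u)\|_{\nu-1}\le C\|u\|_\nu^2$ that you plan to use or interpolate is false in two dimensions for $\nu<1$. Under $u\mapsto\lambda u(\lambda\,\cdot)$ the left side scales like $\lambda^{\nu+1}$ and the right side like $\lambda^{2\nu}$; the bound amounts to $H^\nu\cdot H^\nu\subset H^\nu$. Moreover, no choice of intermediate index can produce $\theta_1=\frac{\al(1-2\nu)}{2}$ when $\nu<\frac13$. Take $f=0$, $u(s)\equiv\lambda\phi(\lambda(\cdot-x_0))$ with $x_0\in D$ and $\phi$ a fixed generic divergence-free bump, and $T=\lambda^{-2/\al}$. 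Then $\|u\|_{\cW_T}\approx\lambda^\nu$, since the weighted part is $O(1)$. Up to boundary effects, $F(u)(t)=\lambda\,G(\lambda^{2/\al}t)(\lambda(\cdot-x_0))$ for a fixed $G$, so parabolic scaling gives $\sup_{t\le T}\|F(u)(t)\|_\nu\gtrsim\lambda^\nu$. The claimed bound is $\lesssim\lambda^{2\nu-2\theta_1/\al}+o(\lambda^\nu)$, which forces $\theta_1\le\frac{\al\nu}{2}$. So with $T$-independent constants, as used in Theorem~\ref{thm:exist_final}, the stated $\theta_1$ is unattainable for $\nu<\frac13$, and the ``bookkeeping'' you expect cannot close in general.

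What does work is duality. Write $\langle B(u,v),w\rangle=\int_D(u\cdot\nabla)w\cdot v\,dx$ and use $H^\nu\subset L^{2/(1-\nu)}$ and $H^{1-2\nu}\subset L^{1/\nu}$; this gives $\|B(u,v)\|_{2\nu-2}\le C\|u\|_\nu\|v\|_\nu$ and $\|B(u,v)\|_{2\nu-1}\le C\|u\|_\nu\|v\|_{\nu+1}$. In both components the smoothing gap is then $2-\nu$. By spectral calculus (Lemma~\ref{lem:smoothing} extended to negative $\mu$) the kernel is $(t-s)^{\frac{\al\nu}{2}-1}$, and you get
\[
\|F_B(u)(t)\|_\nu\le Ct^{\frac{\al\nu}{2}}\sup_s\|u(s)\|_\nu^2,
\]
\[
t^{\frac{\al(\nu+1)}{2}}\|F_B(u)(t)\|_{\nu+1}\le Ct^{\frac{\al\nu}{2}}\sup_s\|u(s)\|_\nu\,\sup_s s^{\frac{\al(\nu+1)}{2}}\|u(s)\|_{\nu+1}.
\]
This is the lemma with $\theta_1=\frac{\al\nu}{2}$, which is all the fixed-point argument needs; it implies the stated form when $\nu\ge\frac13$ and $T\le1$. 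Your force-term estimates are fine.

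Your last step is also incorrect as written. $\E\,\Xi(u)^{2p}$ is a $2p$-th moment and is not controlled by $\|u\|_{\cW_T}^2$, which involves only $p$-th moments. For $u=\eta v$ with $v$ deterministic and $\E|\eta|^p<\infty=\E|\eta|^{2p}$, one has $F_B(u)=\eta^2F_B(v)$, whose $\cW_T$-norm is infinite. The quadratic estimate has to be stated pathwise, with deterministic norms for each fixed $\omega$, and the contraction run pathwise or via truncation and stopping times. The paper glosses over the same point.
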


\begin{proof}
We estimate the two components of the $\cW_T$ norm separately.

\paragraph{Estimate in $\dot{H}^\nu$.} Using Lemma~\ref{lem:Salpha_basic},
\[
\|F(u)(t)\|_\nu \le C \int_0^t (t-s)^{\frac{(2-\nu)\al-2}{2}} \bigl( \|B(u(s))\|_\nu + \|f(u(s))\|_\nu \bigr) ds.
\]

For the $B$-term, Lemma~\ref{lem:B_estimates} gives $\|B(u(s))\|_\nu \le C \|u(s)\|_{\nu+1}^2$. \\ 
By definition of $\cW_T$, $\|u(s)\|_{\nu+1} \le s^{-\frac{\al(\nu+1)}{2}} \|u\|_{\cW_T}$. Hence
\[
\|B(u(s))\|_\nu \le C s^{-\al(\nu+1)} \|u\|_{\cW_T}^2.
\]

For the $f$-term, Assumption~\ref{ass:f} gives $\|f(u(s))\|_\nu \le L_f (1 + \|u(s)\|_\nu) \le L_f (1 + \|u\|_{\cW_T})$.

Now substitute these bounds. The integral involving $B$ becomes
\[
\int_0^t (t-s)^{\frac{(2-\nu)\al-2}{2}} s^{-\al(\nu+1)} ds = t^{\frac{\al(1-2\nu)}{2}} \int_0^1 (1-\tau)^{\frac{(2-\nu)\al-2}{2}} \tau^{-\al(\nu+1)} d\tau.
\]
The integral converges because $\frac{(2-\nu)\al-2}{2} > -1$ (since $\nu<2$) and $-\al(\nu+1) > -1$ (since $\al(\nu+1)<1$). Thus this term is bounded by $C t^{\theta_1}$ with $\theta_1 = \frac{\al(1-2\nu)}{2}$.

The integral involving $f$ is
\[
\int_0^t (t-s)^{\frac{(2-\nu)\al-2}{2}} ds = C t^{\frac{(2-\nu)\al}{2}}.
\]

Therefore,
\[
\|F(u)(t)\|_\nu \le C \|u\|_{\cW_T}^2 t^{\theta_1} + C (1 + \|u\|_{\cW_T}) t^{\frac{(2-\nu)\al}{2}}.
\]

\paragraph{Estimate in $\dot{H}^{\nu+1}$ with weight.}
We estimate $\|B(u(s))\|_{\nu}$ rather than $\|B(u(s))\|_{\nu+1}$,
and apply the smoothing operator $\cS_\al(t-s)$ to gain the extra
derivative. Specifically, from Lemma~\ref{lem:smoothing},
\[
\|\cS_\al(t-s) B(u(s))\|_{\nu+1}
\le C(t-s)^{\frac{(1-\nu)\al}{2}-1}\|B(u(s))\|_\nu.
\]
By Lemma~\ref{lem:B_estimates}, $\|B(u(s))\|_\nu \le C\|u(s)\|_{\nu+1}^2
\le C s^{-\al(\nu+1)}\|u\|_{\cW_T}^2$. Hence
\begin{align*}
t^{\frac{\al(\nu+1)}{2}}\|F(u)(t)\|_{\nu+1}
&\le C\,t^{\frac{\al(\nu+1)}{2}}
\int_0^t (t-s)^{\frac{(1-\nu)\al}{2}-1} s^{-\al(\nu+1)}
\|u\|_{\cW_T}^2\,ds \\
&\quad + C\,t^{\frac{\al(\nu+1)}{2}}
\int_0^t (t-s)^{\frac{(1-\nu)\al}{2}-1} L_f(1+\|u\|_{\cW_T})\,ds.
\end{align*}
For the $B$-term, changing variables $s=t\tau$ gives the Beta integral
\[
\int_0^t(t-s)^{\frac{(1-\nu)\al}{2}-1}s^{-\al(\nu+1)}ds
= t^{\frac{(1-\nu)\al}{2}-\al(\nu+1)}
B\!\Bigl(\tfrac{(1-\nu)\al}{2},\,1-\al(\nu+1)\Bigr),
\]
which converges because $\frac{(1-\nu)\al}{2}>0$ and $\al(\nu+1)<1$.
After multiplying by $t^{\frac{\al(\nu+1)}{2}}$ the net exponent is
$\frac{\al(1-\nu)}{2}>0$. For the $f$-term, the Beta integral converges
similarly and gives the same exponent. Taking the supremum over $t$
and then expectation yields
\[
\sup_{t\in(0,T]}t^{\frac{p\al(\nu+1)}{2}}
\E\|F(u)(t)\|_{\nu+1}^p
\le C\bigl(\|u\|_{\cW_T}^{2p}T^{p\frac{\al(1-\nu)}{2}}
+(1+\|u\|_{\cW_T})^p T^{p\frac{\al(1-\nu)}{2}}\bigr).
\]
\end{proof}

\subsection{Estimates for the initial term and the stochastic convolution}

The initial term is easy to estimate.

\begin{lemma}\label{lem:initial_estimate}
For $u_0 \in L^p(\Omega;\dot{H}^\nu)$,
\[
\|E_\al(\cdot) u_0\|_{\cW_T} \le C \|u_0\|_{L^p(\Omega;\dot{H}^\nu)}.
\]
\end{lemma}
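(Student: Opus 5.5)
The estimate splits along the two components of the $\cW_T$ norm, and each is handled pathwise (for fixed $\omega$) before taking expectations. Since $u_0$ is $\cF_0$-measurable and $E_\al(t)$ is a deterministic operator, every bound has the form $\|E_\al(t)u_0(\omega)\|_\ast\le c(t)\|u_0(\omega)\|_\nu$. Raising such a bound to the power $p$, taking the supremum in $t$ and then the expectation gives the claim.

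\emph{Unweighted component.} I would use the subordination formula \eqref{eq:Ealpha}. Because $A^{\nu/2}$ commutes with $T(s)$, and $\|T(s)\|_{\cL(\dot H^\nu)}\le 1$ by contractivity, we get
\[
\|E_\al(t)u_0\|_\nu\le\int_0^\infty\xi_\al(\theta)\|T(t^\al\theta)A^{\nu/2}u_0\|\,d\theta\le\|u_0\|_\nu ,
\]
where we used $\int_0^\infty\xi_\al=1$, which is \eqref{eq:moment} with $\rho=0$. This is the $\mu=\rho$ case of the smoothing bound, which Lemma~\ref{lem:smoothing} does not state, so it is cleaner to do it directly. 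Hence $\E\sup_{t\in[0,T]}\|E_\al(t)u_0\|_\nu^p\le\E\|u_0\|_\nu^p$. To place $E_\al(\cdot)u_0$ in $C([0,T];\dot H^\nu)$, I would invoke Lemma~\ref{lem:MittagLeffler_diff}, whose $t_1=0$ case explicitly requires $u_0\in\dot H^\nu$, together with strong continuity of $T$ on $\dot H^\nu$.

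\emph{Weighted component.} This is the delicate step. The paper's motivating bound $\|E_\al(t)u_0\|_{\nu+1}\le Ct^{-\al(\nu+1)/2}\|u_0\|$ (Lemma~\ref{lem:MittagLeffler_estimates} with index $\nu+1<2$) already gives $t^{\al(\nu+1)/2}\|E_\al(t)u_0\|_{\nu+1}\le C\|u_0\|\le C\lambda_1^{-\nu/2}\|u_0\|_\nu$. This yields the lemma directly, using only the $\dot H^0$ norm of $u_0$. Alternatively, one can use the sharper gain of a single derivative: by analyticity, $\|T(s)\chi\|_{\nu+1}\le Cs^{-1/2}\|\chi\|_\nu$. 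Integrating against $\xi_\al$ with \eqref{eq:moment} at $\rho=-1/2>-1$ gives
\[
\|E_\al(t)u_0\|_{\nu+1}\le Ct^{-\al/2}\frac{\Gamma(1/2)}{\Gamma(1-\al/2)}\|u_0\|_\nu ,
\]
and then $t^{\al(\nu+1)/2}\|E_\al(t)u_0\|_{\nu+1}\le CT^{\al\nu/2}\|u_0\|_\nu$, which is bounded on $(0,T]$. Either route works; I would present the second because it makes the role of $u_0\in\dot H^\nu$ transparent.

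\emph{Conclusion and main obstacle.} Taking $p$-th powers, suprema over $t$, and expectations, then adding the two components, gives $\|E_\al(\cdot)u_0\|_{\cW_T}\le C\|u_0\|_{L^p(\Omega;\dot H^\nu)}$, with $C$ depending on $\al,\nu,T$ and $D$. The main obstacle is not the estimates but the measurability and continuity needed for membership in $\cW_T$ as framed in Lemma~\ref{lem:W_complete}. In particular, one must check continuity of $t\mapsto t^{\al(\nu+1)/2}E_\al(t)u_0$ in $\dot H^{\nu+1}$ on $(0,T]$. I would obtain it from the analyticity of $T(t)$ for $t>0$, combined with dominated convergence in the $\theta$-integral.
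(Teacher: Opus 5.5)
Your proof is correct and follows the paper's argument. You bound each component of the $\cW_T$ norm pathwise, using uniform boundedness of $E_\al(t)$ on $\dot H^\nu$ for the unweighted part and Lemma~\ref{lem:MittagLeffler_estimates} at index $\nu+1$ (your first route, $t^{\al(\nu+1)/2}\|E_\al(t)u_0\|_{\nu+1}\le C\|u_0\|$) for the weighted part, then take suprema and expectations. Your extras are harmless refinements: the direct subordination proof of $\|E_\al(t)u_0\|_\nu\le\|u_0\|_\nu$, the one-derivative gain via the $\rho=-1/2$ moment, and the continuity check for Lemma~\ref{lem:W_complete}. The first of these properly justifies a bound that the paper attributes somewhat loosely to Lemma~\ref{lem:MittagLeffler_estimates}, which as stated only gives $\|E_\al(t)\chi\|_\nu\le Ct^{-\al\nu/2}\|\chi\|$.
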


\begin{proof}
From Lemma~\ref{lem:MittagLeffler_estimates}, we have
\[
\|E_\al(t)u_0\|_\nu \le C \|u_0\|_\nu.
\]
For the second inequality, Lemma~\ref{lem:MittagLeffler_estimates} with $\nu$ replaced by $\nu+1$ gives
\[
\|E_\al(t)u_0\|_{\nu+1} \le C t^{-\frac{\al(\nu+1)}{2}} \|u_0\|.
\]
Hence $t^{\frac{\al(\nu+1)}{2}} \|E_\al(t)u_0\|_{\nu+1} \le C \|u_0\|$. This requires $u_0 \in \dot{H}^\nu \subset \dot{H}^0$, which holds by assumption. Taking supremum and expectation yields the result.
\end{proof}

For the stochastic convolution, we already have the estimates from Section~\ref{sec:convolution}.

\begin{lemma}\label{lem:Zk_estimate}
Under the conditions of Theorem~\ref{thm:exist},
\[
\|Z_k\|_{\cW_T} \le C_{p,k} T^{\frac{\al(1-\nu)+2H-2}{2}}.
\]
\end{lemma}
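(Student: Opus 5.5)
The norm $\|Z_k\|_{\cW_T}$ has two parts: the supremum in $\dot{H}^\nu$ and the weighted supremum in $\dot{H}^{\nu+1}$. I will bound each by combining the pointwise estimate of Theorem~\ref{thm:reg1p} with the increment estimate of Theorem~\ref{thm:reg2p}. The supremum is brought inside the expectation by a Kolmogorov--Garsia--Rodemich--Rumsey chaining argument. Hypercontractivity lets $p$ be as large as needed at the cost of the factor $(p-1)^{k/2}$.

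\emph{First part.} Theorem~\ref{thm:reg2p} gives $\E\|Z_k(t_2)-Z_k(t_1)\|_\nu^{q}\le C_{q,k}^q|t_2-t_1|^{q\gamma}$ for every $q\ge2$. Choose $q$ with $q\gamma>1$ and apply the Garsia--Rodemich--Rumsey inequality on $[0,T]$ with $Z_k(0)=0$. For $\delta<\gamma-1/q$ this yields
\[
\E\sup_{t\le T}\|Z_k(t)\|_\nu^q\le C\,T^{q\delta}\,\E[Z_k]_{\delta,q}^q\le C_{q,k}^q\,T^{q\gamma},
\]
where $[Z_k]_{\delta,q}$ denotes the GRR Besov-type seminorm of order $\delta$. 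Taking $\delta$ close to $\gamma$ and using Hölder's inequality for $p\le q$ gives $\E\sup_t\|Z_k(t)\|_\nu^p\le C_{p,k}^pT^{p\gamma'}$ for any $\gamma'<\gamma$.

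To reach the stated exponent $\frac{\al(1-\nu)+2H-2}{2}$, I would rescale. By $H$-self-similarity and stationary increments of $Z_H^k$, together with the scaling $\cS_\al(ct)\sim c^{\al-1}$ in the spectral variable, the process $Z_k(Ts)$, $s\in[0,1]$, has the same law structure as a process on $[0,1]$ multiplied by the pointwise size $T^{\frac{\al(1-\nu)+2H-2}{2}}$ from Theorem~\ref{thm:reg1p}. Concretely, the increment bound can be sharpened to $C\,T^{\frac{\al(1-\nu)+2H-2}{2}-\gamma}|t_2-t_1|^{\gamma}$ for $t_i\le T$. If this bookkeeping is too delicate, I would settle for the exponent $\min(\gamma',\frac{\al(1-\nu)+2H-2}{2})$, which is all the fixed point argument actually needs. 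What matters there is that some positive power of $T$ appears.

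\emph{Second part.} This is the main obstacle. Lemma~\ref{lem:HS} with $\nu+1\ge1$ fails, because the Hilbert--Schmidt sum diverges. So $Z_k(t)\in\dot{H}^{\nu+1}$ cannot be obtained from the estimates proved so far, and the weight $t^{\al(\nu+1)/2}$ does not fix this divergence, which comes from high modes rather than from small times. My first attempt would be to redo Lemma~\ref{lem:HS} in the norm $\nu+1$. Using the sharper Mittag-Leffler decay $|E_{\al,\al}(-x)|\le C(1+x)^{-2}$, the sum becomes $\sum_j\lambda_j^{\nu+1}(1+\lambda_jr^\al)^{-4}$. This converges and gives $\|\cS_\al(r)\|_{\cL_2(L^2,\dot{H}^{\nu+1})}^2\le Cr^{-\al\nu-2}$. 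The next step is to check whether the double integral of Lemma~\ref{lem:Zk_L2} with $a=-\frac{\al\nu+2}{2}$ is finite. It requires $a>-1$, which fails since $a<-1$. This singularity at $s=t$ is genuine. I would therefore split $Z_k(t)$ into low and high frequencies at the level $\lambda_j\sim t^{-\al}$ and check whether the weight compensates. If even this fails, the honest conclusion is that the second component must be handled differently in the fixed point, for instance by applying the weight only to $u-Z_k$, that is, by solving for $v=u-Z_k$.
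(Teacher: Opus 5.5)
Your proposal leaves the weighted $\dot H^{\nu+1}$ component unproved, and that gap cannot be closed. This component of $\|Z_k\|_{\cW_T}$ is infinite, so the lemma is false as stated.

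Your diverging Cauchy--Schwarz bound with $a=-\frac{\al\nu+2}{2}$ is only an upper bound, so it does not show this by itself; positivity does. By \eqref{eq:Ealphaalpha}, $\cS_\al(r)e_j=s_j(r)e_j$ with $s_j(r)=r^{\al-1}\int_0^\infty\al\theta\,\xi_\al(\theta)e^{-\lambda_j r^\al\theta}\,d\theta>0$. Moreover $s_j(r)\ge c_\al r^{\al-1}$ whenever $\lambda_j r^\al\le1$, where $c_\al=\int_0^\infty\al\theta\,\xi_\al(\theta)e^{-\theta}\,d\theta>0$. Apply the isometry of Lemma~\ref{lem:isometry_Zk} to the projections of $Z_k(t)$ onto $\mathrm{span}\{e_1,\dots,e_N\}$ and let $N\to\infty$ by monotone convergence. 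Then restrict the nonnegative integrand to $[0,\rho_j]^2$ with $\rho_j=\min\{t,\lambda_j^{-1/\al}\}$. For every $t>0$ this gives
\begin{align*}
\E\|Z_k(t)\|_{\nu+1}^2
&\ge H(2H-1)\,c_\al^2\sum_{j}\lambda_j^{\nu+1}\int_0^{\rho_j}\!\!\int_0^{\rho_j}x^{\al-1}y^{\al-1}|x-y|^{2H-2}\,dx\,dy\\
&= c\sum_{j}\lambda_j^{\nu+1}\rho_j^{2\al+2H-2}
\ \ge\ c\sum_{j:\,\lambda_j\ge t^{-\al}}\lambda_j^{\nu-1+\frac{2-2H}{\al}}=\infty.
\end{align*}
Here $c\in(0,\infty)$ because $\al+H>1$, which follows from $\al(1-\nu)+2H>2$. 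The series diverges because $\lambda_j\sim C_2 j$ and $\nu-1+\frac{2-2H}{\al}>-1$.

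Hence, for any $t_0\in(0,T]$,
\[
\E\sup_{t\le T}t^{\frac{p\al(\nu+1)}{2}}\|Z_k(t)\|_{\nu+1}^p\ge t_0^{\frac{p\al(\nu+1)}{2}}\bigl(\E\|Z_k(t_0)\|_{\nu+1}^2\bigr)^{p/2}=\infty.
\]
The same computation with $\nu+1$ replaced by $\mu$ shows that $\al(1-\mu)+2H>2$ is necessary for $Z_k(t)\in L^2(\Omega;\dot H^\mu)$. For $\mu=\nu+1$ this would require $2H>2+\al\nu$, which never holds. The divergence sits exactly in the modes $\lambda_j\ge t^{-\al}$. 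So your low/high frequency split would only confirm the failure, and no time weight can compensate it.

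The paper's proof does not get around this. It simply asserts that for the weighted $\dot H^{\nu+1}$ norm ``a similar estimate holds with a better exponent'', which the computation above refutes. This also undermines the space $\cW_T$ in Theorem~\ref{thm:exist}: if $u\in\cW_T$ solved \eqref{eq:mild}, Lemmas~\ref{lem:initial_estimate} and~\ref{lem:F_estimates} would give $Z_k=u-E_\al(\cdot)u_0-F(u)$ a finite weighted $\dot H^{\nu+1}$ norm.

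For the first component, the paper takes suprema of the pointwise bound of Theorem~\ref{thm:reg1p}. That controls $\sup_t\E\|Z_k(t)\|_\nu^p$ but not $\E\sup_t\|Z_k(t)\|_\nu^p$. Your Garsia--Rodemich--Rumsey step is the correct repair and is more careful than the paper. It delivers $T^{\gamma}$ with $\gamma\le\frac{\al(1-\nu)+2H-2}{2}$, and a constant $(q-1)^{k/2}C_0$ with $q>1/\gamma$. Your rescaling to recover the stated exponent is not valid as written: $Z_k(T\cdot)$ has the law of $T^{H+\al-1}$ times the convolution built from $T^\al A$ rather than $A$. As you note, a positive power of $T$ is all the fixed point needs.

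Your fallback of solving for $v=u-Z_k$, with the weight on $v$ only, is the natural structural fix, but it proves a different statement. The equation for $v$ contains $B(Z_k,Z_k)$, $B(Z_k,v)$ and $B(v,Z_k)$ with $Z_k$ only in $\dot H^\nu$, $\nu<\frac12$. The first estimate of Lemma~\ref{lem:B_estimates} does not control these terms.
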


\begin{proof}
From Theorem~\ref{thm:reg1p}, we have $\|Z_k(t)\|_{L^p(\Omega;\dot{H}^\nu)} \le C_{p,k} t^{\frac{\al(1-\nu)+2H-2}{2}}$. For the weighted $\dot{H}^{\nu+1}$ norm, a similar estimate holds with a better exponent. Taking suprema gives the result.
\end{proof}

\subsection{Fixed-point argument}

We now combine the estimates to show that $\Phi$ is a contraction on a small ball in $\cW_T$ for sufficiently small $T$.

\begin{thm}[Local existence and uniqueness]\label{thm:exist_final}
Under Assumption~\ref{ass:f} and the conditions
\[
0<\nu<1,\qquad \al(1-\nu)+2H>2,\qquad \al(\nu+1)<1,
\]
with $u_0\in L^p(\Omega,\cF_0;\dot{H}^\nu)$, there exists $T^*>0$ and a unique mild solution $u$ to \eqref{eq:mild} in $\cW_{T^*}$.
\end{thm}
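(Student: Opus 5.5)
We apply the Banach fixed-point theorem to the map $\Phi$ defined above, on a closed ball of $\cW_T$ with $T$ small. Fix $p\ge2$ and set
\[
R := 2\Bigl(C\|u_0\|_{L^p(\Omega;\dot H^\nu)} + 1\Bigr),\qquad
\mathcal{B}_R(T) := \{u\in\cW_T : \|u\|_{\cW_T}\le R\},
\]
which is a closed subset of the Banach space of Lemma~\ref{lem:W_complete}.

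\textbf{Step 1: invariance of the ball.} For $u\in\mathcal{B}_R(T)$, Lemmas~\ref{lem:initial_estimate}, \ref{lem:F_estimates} and \ref{lem:Zk_estimate} give
\[
\|\Phi(u)\|_{\cW_T}\le C\|u_0\|_{L^p} + C_1T^{\theta_1}R^2 + C_2T^{\theta_2}(1+R) + C_{p,k}T^{\frac{\al(1-\nu)+2H-2}{2}}.
\]
All three exponents are positive under the hypotheses.
\begin{itemize}
\item $\theta_1=\frac{\al(1-2\nu)}{2}>0$ requires $\nu<1/2$. This is the constraint in Theorem~\ref{thm:exist}; in the range $1/2\le\nu<1$ the $B$-term gives no small power of $T$, so I would either restrict to $\nu<1/2$ or re-balance with the lower bilinear estimate of Lemma~\ref{lem:B_estimates}.
\item $\theta_2>0$ holds for any $\nu<1$.
\item The noise exponent is positive by the assumption $\al(1-\nu)+2H>2$.
\end{itemize}
Choosing $T$ small makes the last three terms at most $R/2$, so $\Phi$ maps $\mathcal{B}_R(T)$ into itself. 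Continuity and adaptedness of $\Phi(u)$ come from strong continuity of $E_\al$ and $\cS_\al$ on $(0,T]$. Continuity of $Z_k$ comes from Theorem~\ref{thm:reg2p} combined with Kolmogorov's criterion, which is available since $p$ is arbitrary and so $p\gamma>1$ can be arranged.

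\textbf{Step 2: contraction.} For $u,v\in\mathcal{B}_R(T)$, the terms $E_\al u_0$ and $Z_k$ cancel, so only the $F$-part remains. I would use the bilinear splitting
\[
B(u)-B(v)=B(u-v,u)+B(v,u-v).
\]
By the first estimate of Lemma~\ref{lem:B_estimates},
\[
\|B(u(s))-B(v(s))\|_\nu\le C\bigl(\|u(s)\|_{\nu+1}+\|v(s)\|_{\nu+1}\bigr)\|u(s)-v(s)\|_{\nu+1}\le 2CR\,s^{-\al(\nu+1)}\|u-v\|_{\cW_T}.
\]
The Lipschitz part is bounded by $L_f\|u-v\|_{\cW_T}$ using Assumption~\ref{ass:f}. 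Repeating verbatim the Beta-integral computations from the proof of Lemma~\ref{lem:F_estimates}, in both the $\dot H^\nu$ component and the weighted $\dot H^{\nu+1}$ component, gives
\[
\|\Phi(u)-\Phi(v)\|_{\cW_T}\le \bigl(2C_1RT^{\theta_1}+C_2T^{\theta_2}\bigr)\|u-v\|_{\cW_T}.
\]
Shrinking $T$ further makes the factor at most $1/2$. Banach's theorem then yields a unique fixed point in $\mathcal{B}_R(T^*)$.

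\textbf{Step 3: uniqueness in all of $\cW_{T^*}$.} Suppose $u,v$ are two solutions. Each has finite norm, so both lie in some larger ball of radius $R'$. Rerunning the contraction estimate on $[0,\tau]$ with $\tau$ small (depending on $R'$) shows $u=v$ on $[0,\tau]$. Iterating the argument, or using a Gronwall-type argument with the Beta kernels, extends this to $u=v$ on $[0,T^*]$.

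\textbf{Main obstacle.} The delicate point is pathwise versus moment control. The norm of $\cW_T$ puts $\E\sup_t$ outside, while the bound $\|u(s)\|_{\nu+1}\le s^{-\al(\nu+1)/2}\|u\|_{\cW_T}$ used in Lemma~\ref{lem:F_estimates} is really a pathwise statement. Moreover, the quadratic term produces $\E\bigl(\sup_s s^{\al(\nu+1)/2}\|u(s)\|_{\nu+1}\bigr)^{2p}$, i.e.\ a $2p$-moment, which is not controlled by a $p$-moment.

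My first attempt would be to run the fixed point pathwise. Define for each $\omega$ the deterministic norm
\[
|u|_T:=\sup_{t\in[0,T]}\|u(t)\|_\nu+\sup_{t\in(0,T]}t^{\frac{\al(\nu+1)}{2}}\|u(t)\|_{\nu+1},
\]
and take the random radius $R(\omega)=2\bigl(C\|u_0\|_\nu+|Z_k|_{T_0}+1\bigr)$. This requires the weighted $\dot H^{\nu+1}$ bound on $Z_k$ asserted in Lemma~\ref{lem:Zk_estimate}, which I would verify via Lemma~\ref{lem:HS} with $\nu$ replaced by $\nu+1$. That substitution is exactly where the condition $\nu<1$ in Lemma~\ref{lem:HS} bites, so this step needs care, and failing it one falls back on the weighted sup of a finite-moment process.

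The pathwise fixed point then gives a random existence time $T^*(\omega)>0$. If a deterministic $T^*$ is required, I would stop the argument at the stopping time where $|Z_k|_t$ exceeds a level $M$. The $L^p$ bound on $\E\sup$ then follows from the moment estimates of Theorem~\ref{thm:reg1p} combined with hypercontractivity, with all constants of the form $C_{p,k}$.
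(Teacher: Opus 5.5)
Your plan follows the paper's own route: a Banach fixed point on a ball of $\cW_T$ built from Lemmas~\ref{lem:initial_estimate}, \ref{lem:F_estimates} and \ref{lem:Zk_estimate}. You rightly notice two things that proof glosses over. First, $\theta_1>0$ forces $\nu<1/2$. Second, the quadratic term needs $2p$ moments, so the argument has to be run pathwise. But the step you flag as ``needing care'' is fatal, for the paper's proof as well: $Z_k$ has no weighted $\dot H^{\nu+1}$ control at all. Here is why:
\begin{itemize}
\item the eigenvalues $s_{\al,j}(r)=r^{\al-1}E_{\al,\al}(-\lambda_j r^\al)$ are positive, by the subordination formula with $\xi_\al\ge0$;
\item $|u-v|^{2H-2}\ge t^{2H-2}$ on $[0,t]^2$;
\item $\int_0^t s_{\al,j}(r)\,dr=\lambda_j^{-1}\bigl(1-E_\al(-\lambda_j t^\al)\bigr)$, with scalar Mittag-Leffler functions.
\end{itemize}
Applying the scalar isometry of Lemma~\ref{lem:isometry} to each coordinate $\langle Z_k(t),e_j\rangle$ and summing gives
\[
\E\|Z_k(t)\|_{\nu+1}^2\ \ge\ H(2H-1)\,t^{2H-2}\bigl(1-E_\al(-\lambda_1 t^\al)\bigr)^2\sum_{j\ge1}\lambda_j^{\nu-1}=\infty ,
\]
since $\lambda_j\le c_2 j$. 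Hence $\|Z_k\|_{\cW_T}=\infty$ for every $k$ and every $T>0$, so Lemma~\ref{lem:Zk_estimate} is false. For $k=1$ the coordinates are independent Gaussians, so even $\|Z_k(t)\|_{\nu+1}=\infty$ almost surely, and your random radius $R(\omega)$ is infinite. No time weight can compensate a defect in spatial regularity.

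Worse, the target itself is unreachable. Suppose some $u\in\cW_{T^*}$ solved \eqref{eq:mild}, and fix $t>0$. We have $\|B(u(s))\|\le C\|u(s)\|_{\nu+1}^2\le C s^{-\al(\nu+1)}\sup_r r^{\al(\nu+1)}\|u(r)\|_{\nu+1}^2$ and $\|\cS_\al(t-s)\|_{\cL(\dot H^0,\dot H^{\nu+1})}\le C(t-s)^{\frac{\al(1-\nu)}{2}-1}$; the $f$-part is immediate. Together with $\al(\nu+1)<1$, these put $F(u)(t)$ and $E_\al(t)u_0$ in $\dot H^{\nu+1}$. Then $Z_k(t)=u(t)-E_\al(t)u_0-F(u)(t)\in\dot H^{\nu+1}$ almost surely, which is impossible for $k=1$. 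So, at least for $k=1$, no mild solution lies in $\cW_{T^*}$, and the statement must be reformulated in another space before any fixed-point argument can work.

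Independently, ``repeating verbatim the Beta-integral computations'' of Lemma~\ref{lem:F_estimates} gives no small power of $T$ for any $\nu>0$. So restricting to $\nu<1/2$ does not rescue Steps~1--2. The exponents in that lemma are miscomputed:
\begin{itemize}
\item $\int_0^t(t-s)^{\frac{(2-\nu)\al-2}{2}}s^{-\al(\nu+1)}\,ds=C\,t^{-\frac{3\al\nu}{2}}$. The value $\frac{\al(1-2\nu)}{2}$ is what you get after multiplying by the weight $t^{\frac{\al(\nu+1)}{2}}$, which the unweighted component does not carry.
\item In the weighted component the net exponent is $\frac{\al(1-\nu)}{2}-\frac{\al(\nu+1)}{2}=-\al\nu$.
\item Your contraction estimate inherits the same $t^{-\frac{3\al\nu}{2}}$.
\end{itemize}
This is structural. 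The weight $t^{\frac{\al(\nu+1)}{2}}$ is the $L^2$-critical one in dimension $2$, so by scaling, bounds using only the weighted component give at best $T^0$. The smallness must come from the unweighted $\dot H^\nu$ norm, which is subcritical precisely because $\nu>0$. The second estimate of Lemma~\ref{lem:B_estimates} cannot supply this either, since by scaling it fails for $\nu<1$.

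A version that closes is a pathwise fixed point in $C([0,T];\dot H^\nu)$ alone. For $0<\nu\le\frac12$, the bound $|\langle B(w),\phi\rangle|=\bigl|\int_D (w\cdot\nabla)\phi\cdot w\,dx\bigr|\le\|w\|_{L^{2/(1-\nu)}}^2\|\nabla\phi\|_{L^{1/\nu}}$ gives $\|B(w)\|_{2\nu-2}\le C\|w\|_\nu^2$. Combined with $\|\cS_\al(r)\|_{\cL(\dot H^{2\nu-2},\dot H^\nu)}\le Cr^{\frac{\al\nu}{2}-1}$, this yields
\[
\Bigl\|\int_0^t\cS_\al(t-s)B(w(s))\,ds\Bigr\|_\nu\le C\,t^{\frac{\al\nu}{2}}\sup_{s\le t}\|w(s)\|_\nu^2 .
\]
For $\frac12<\nu<1$, use $\dot H^{-1}$ and $\|w\|_{L^4}\le C\|w\|_{1/2}$ instead, which gives the factor $t^{\frac{\al(1-\nu)}{2}}$. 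Given $Z_k\in C([0,T];\dot H^\nu)$ almost surely, this yields a contraction on a random ball up to a random (stopping) time $\tau$, with uniqueness in $C([0,\tau];\dot H^\nu)$, but not in $\cW_{T^*}$.
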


\begin{proof}
Let $A = \|E_\al(\cdot)u_0\|_{\cW_T}$ and $B = \|Z_k\|_{\cW_T}$. From Lemmas~\ref{lem:initial_estimate} and~\ref{lem:Zk_estimate}, $A$ is finite and $B \le C_{p,k} T^{\theta_3}$ with $\theta_3 = \frac{\al(1-\nu)+2H-2}{2} > 0$.

Choose $T$ small enough so that
\[
C_1 T^{\theta_1} (2(A+B))^2 + C_2 T^{\theta_2} (1+2(A+B)) \le A+B,
\]
and
\[
C_4 T^{\theta_1} (2(A+B)) + C_5 T^{\theta_2} L_f \le \frac12.
\]
Such a $T$ exists because $\theta_1,\theta_2,\theta_3 > 0$. Set $R = 2(A+B)$.

Let $B_R$ be the ball of radius $R$ in $\cW_T$. For any $u \in B_R$, using Lemma~\ref{lem:F_estimates},
\[
\|\Phi(u)\|_{\cW_T} \le A + C_1 T^{\theta_1} R^2 + C_2 T^{\theta_2} (1+R) + B \le A+B + (A+B) = R.
\]
Thus $\Phi(B_R) \subset B_R$.

For the contraction estimate, let $u,v \in B_R$. Then
\[
\Phi(u) - \Phi(v) = \int_0^t \cS_\al(t-s) \bigl[ B(u(s))-B(v(s)) + f(u(s))-f(v(s)) \bigr] ds.
\]

Using $B(u)-B(v) = B(u-v,u) + B(v,u-v)$ and Lemma~\ref{lem:B_estimates},
\[
\|B(u(s))-B(v(s))\|_\nu \le C (\|u(s)\|_{\nu+1} + \|v(s)\|_{\nu+1}) \|u(s)-v(s)\|_{\nu+1}.
\]

From the definition of $\cW_T$, $\|u(s)\|_{\nu+1} \le s^{-\frac{\al(\nu+1)}{2}} \|u\|_{\cW_T} \le s^{-\frac{\al(\nu+1)}{2}} R$, and similarly for $v$. Hence,
\[
\|B(u(s))-B(v(s))\|_\nu \le C R s^{-\frac{\al(\nu+1)}{2}} \|u(s)-v(s)\|_{\nu+1}.
\]

For the $f$-term, Assumption~\ref{ass:f} gives $\|f(u(s))-f(v(s))\|_\nu \le L_f \|u(s)-v(s)\|_\nu$.

Now, using Lemma~\ref{lem:Salpha_basic},
\[
\|\Phi(u)(t)-\Phi(v)(t)\|_\nu \le C \int_0^t (t-s)^{\frac{(2-\nu)\al-2}{2}} \bigl( R s^{-\frac{\al(\nu+1)}{2}} \|u(s)-v(s)\|_{\nu+1} + L_f \|u(s)-v(s)\|_\nu \bigr) ds.
\]

For the term with $\|\cdot\|_{\nu+1}$, we use the weight from the $\cW_T$ norm:
\[
\|u(s)-v(s)\|_{\nu+1} \le s^{-\frac{\al(\nu+1)}{2}} \|u-v\|_{\cW_T}.
\]

Thus,
\[
\int_0^t (t-s)^{\frac{(2-\nu)\al-2}{2}} R s^{-\frac{\al(\nu+1)}{2}} \cdot s^{-\frac{\al(\nu+1)}{2}} ds
= R \int_0^t (t-s)^{\frac{(2-\nu)\al-2}{2}} s^{-\al(\nu+1)} ds
\le C R t^{\theta_1} \|u-v\|_{\cW_T},
\]
where $\theta_1 = \frac{\al(1-2\nu)}{2}$ as before.

For the term with $\|\cdot\|_\nu$, we have
\[
\int_0^t (t-s)^{\frac{(2-\nu)\al-2}{2}} L_f \|u(s)-v(s)\|_\nu ds
\le L_f \|u-v\|_{\cW_T} \int_0^t (t-s)^{\frac{(2-\nu)\al-2}{2}} ds
\le C L_f t^{\frac{(2-\nu)\al}{2}} \|u-v\|_{\cW_T}.
\]

Taking the supremum over $t$ and then expectation, we obtain
\[
\|\Phi(u)-\Phi(v)\|_{\cW_T} \le \bigl( C_4 T^{\theta_1} R + C_5 T^{\theta_2} L_f \bigr) \|u-v\|_{\cW_T},
\]
with $\theta_2 = \frac{(2-\nu)\al}{2}$.

By the choice of $T$, the contraction factor is at most $1/2$. Hence $\Phi$ is a strict contraction on $B_R$. By the Banach fixed-point theorem, there exists a unique $u \in B_R \subset \cW_T$ such that $\Phi(u)=u$. This $u$ is the desired mild solution. The maximal existence time $T^*$ is obtained by a standard continuation argument. Suppose the solution $u$ has been constructed on $[0,T_0]$
with $u\in\cW_{T_0}$. For $t\in[0,\delta]$, write the Caputo equation
on $[T_0, T_0+\delta]$ with initial value $u(T_0)$. The memory integral
$\int_0^{T_0}(T_0+t-s)^{-\al}u'(s)\,ds$ is well defined for
$t\in[0,\delta]$ because $u'\in L^1(0,T_0;\dot{H}^\nu)$
(which follows from $u\in\cW_{T_0}$ and $\al(\nu+1)<1$) and the kernel
$(T_0+t-s)^{-\al}$ is bounded away from zero singularity on $[0,T_0]$.
This term may therefore be treated as an additional external force
$g\in L^p(\Omega;C([0,\delta];\dot{H}^\nu))$, and the fixed-point
argument of the present proof applies verbatim on $[T_0,T_0+\delta]$
with $u(T_0)$ as initial condition. Iterating this procedure
(see \cite[Theorem~6.1]{kilbas2006theory} for the abstract Volterra
framework) yields the maximal solution on $[0,T^*)$.
\end{proof}

\begin{rmk}
The solution obtained is local in time. Global existence would require additional a priori estimates that are not available in dimension $2$ due to the supercritical nonlinearity. This is consistent with the deterministic Navier-Stokes equations.
\end{rmk}

\section{Hölder regularity of the solution (Proof of Theorem \ref{thm:holder})}
\label{sec:holder}

In this section we prove Theorem~\ref{thm:holder}. We decompose the increment of the mild solution into several terms and estimate each using the bounds established in the previous sections.

\subsection{Decomposition of the increment}

Let $0 \le t_1 < t_2 \le T^*$. From the mild formulation \eqref{eq:mild}, we write
\[
u(t_2) - u(t_1) = J_1 + J_2 + J_3 + J_4 + J_5,
\]
where
\begin{align*}
J_1 &= [E_\al(t_2) - E_\al(t_1)] u_0, \\
J_2 &= \int_0^{t_1} [\cS_\al(t_2-s) - \cS_\al(t_1-s)] B(u(s)) \, ds, \\
J_3 &= \int_{t_1}^{t_2} \cS_\al(t_2-s) B(u(s)) \, ds, \\
J_4 &= \int_0^{t_1} [\cS_\al(t_2-s) - \cS_\al(t_1-s)] f(u(s)) \, ds, \\
J_5 &= \int_{t_1}^{t_2} \cS_\al(t_2-s) f(u(s)) \, ds + Z_k(t_2) - Z_k(t_1).
\end{align*}

\subsection{Estimate for $J_1$ (initial term)}

From Lemma~\ref{lem:MittagLeffler_diff} with $\nu$, we have
\[
\|J_1\|_\nu \le C (t_2 - t_1)^{\frac{\al\nu}{2}} \|u_0\|_\nu.
\]
Taking the $L^p$ norm,
\[
\|J_1\|_{L^p(\Omega;\dot{H}^\nu)} \le C (t_2 - t_1)^{\frac{\al\nu}{2}} \|u_0\|_{L^p(\Omega;\dot{H}^\nu)}.
\]

\subsection{Estimate for $J_2$ (difference of $\cS_\al$ on $B(u)$)}

Using Lemma~\ref{lem:Salpha_basic}, the operator norm difference is bounded by
\[
\|\cS_\al(t_2-s) - \cS_\al(t_1-s)\|_{\cL(L^2,\dot{H}^\nu)} \le C (t_2 - t_1)^{\frac{2-(2-\nu)\al}{2}}.
\]

Hence,
\[
\|J_2\|_\nu \le C (t_2 - t_1)^{\frac{2-(2-\nu)\al}{2}} \int_0^{t_1} \|B(u(s))\|_\nu \, ds.
\]

From Lemma~\ref{lem:B_estimates} and the definition of $\cW_T$,
\[
\|B(u(s))\|_\nu \le C \|u(s)\|_{\nu+1}^2 \le C s^{-\al(\nu+1)} \|u\|_{\cW_T}^2.
\]

Since $\al(\nu+1) < 1$, the integral $\int_0^{t_1} s^{-\al(\nu+1)} ds$ converges and is bounded by $C t_1^{1-\al(\nu+1)} \le C T^{1-\al(\nu+1)}$. Therefore,
\[
\|J_2\|_{L^p(\Omega;\dot{H}^\nu)} \le C (t_2 - t_1)^{\frac{2-(2-\nu)\al}{2}}.
\]

\subsection{Estimate for $J_3$ (integral over $[t_1,t_2]$ of $B(u)$)}

We distinguish two cases.

\paragraph{Case 1: $t_1 > 0$.}  
Using Lemma~\ref{lem:Salpha_basic},
\[
\|J_3\|_\nu \le C \int_{t_1}^{t_2} (t_2-s)^{\frac{(2-\nu)\al-2}{2}} \|B(u(s))\|_\nu \, ds.
\]
Again, $\|B(u(s))\|_\nu \le C s^{-\al(\nu+1)} \|u\|_{\cW_T}^2$. For $s \ge t_1 > 0$, we have $s^{-\al(\nu+1)} \le t_1^{-\al(\nu+1)}$. Hence,
\[
\|J_3\|_\nu \le C \|u\|_{\cW_T}^2 t_1^{-\al(\nu+1)} \int_{t_1}^{t_2} (t_2-s)^{\frac{(2-\nu)\al-2}{2}} ds
= C \|u\|_{\cW_T}^2 t_1^{-\al(\nu+1)} (t_2-t_1)^{\frac{(2-\nu)\al}{2}}.
\]

\paragraph{Case 2: $t_1 = 0$.}  
In this case, the term $J_3$ does not appear in the decomposition as written. Instead, the contribution of the nonlinearity on $[0,t_2]$ is already included in the mild formulation and must be estimated directly. Using the smoothing properties of $\mathcal S_\alpha$ and the fact that $\|B(u(s))\|_\nu \le C s^{-\al(\nu+1)}$ with $\al(\nu+1)<1$, one obtains
\[
\left\|\int_0^{t_2} \mathcal S_\alpha(t_2-s) B(u(s)) ds\right\|_{L^p(\Omega;\dot H^\nu)} \le C t_2^{\frac{\al(1-\nu)}{2}}.
\]
Since $\frac{\al(1-\nu)}{2} \ge \min\{\frac{\al\nu}{2}, \frac{2-(2-\nu)\al}{2}, \frac{\al(1-\nu)+2H-2}{2}\}$ for all admissible parameters, the final Hölder exponent remains unchanged.
The integral $\int_0^{t_2} (t_2-s)^{\frac{(2-\nu)\alpha-2}{2}} s^{-\alpha(\nu+1)} ds$
converges precisely because $\alpha(\nu+1) < 1$, which ensures that the
singularity $s^{-\alpha(\nu+1)}$ is integrable near $0$. This is the
point where the condition on $\alpha$ and $\nu$ from the existence
theorem (Theorem~\ref{thm:exist}) plays a crucial role.

\subsection{Estimate for $J_4$ (difference of $\cS_\al$ on $f(u)$)}

Similarly to $J_2$, using the Lipschitz property of $f$,
\[
\|J_4\|_\nu \le C (t_2 - t_1)^{\frac{2-(2-\nu)\al}{2}} \int_0^{t_1} \|f(u(s))\|_\nu \, ds.
\]

By Assumption~\ref{ass:f}, $\|f(u(s))\|_\nu \le L_f (1 + \|u(s)\|_\nu) \le L_f (1 + \|u\|_{\cW_T})$. Hence,
\[
\int_0^{t_1} \|f(u(s))\|_\nu \, ds \le C (1 + \|u\|_{\cW_T}) t_1 \le C (1 + \|u\|_{\cW_T}) T.
\]

Thus,
\[
\|J_4\|_{L^p(\Omega;\dot{H}^\nu)} \le C (t_2 - t_1)^{\frac{2-(2-\nu)\al}{2}}.
\]

\subsection{Estimate for $J_5$ (remaining terms)}

The first part of $J_5$ involving $f$ is estimated similarly to $J_3$, giving a factor $(t_2-t_1)^{\frac{(2-\nu)\al}{2}}$ for $t_1>0$, and a factor that is dominated by the stochastic term for $t_1=0$. The second part is the increment of the stochastic convolution, which by Theorem~\ref{thm:reg2p} satisfies
\[
\|Z_k(t_2) - Z_k(t_1)\|_{L^p(\Omega;\dot{H}^\nu)} \le C_{p,k} (t_2-t_1)^{\gamma},
\]
with
\[
\gamma = \min\left\{ \frac{2-(2-\nu)\al}{2},\ \frac{\al(1-\nu)+2H-2}{2} \right\}.
\]

Therefore,
\[
\|J_5\|_{L^p(\Omega;\dot{H}^\nu)} \le C (t_2-t_1)^{\gamma}.
\]

\subsection{Synthesis}

Collecting all the estimates, we have for $t_1 > 0$:
\[
\|u(t_2)-u(t_1)\|_{L^p(\Omega;\dot{H}^\nu)} \le C \sum_{i=1}^5 (t_2-t_1)^{\beta_i},
\]
where
\[
\beta_1 = \frac{\al\nu}{2},\quad
\beta_2 = \frac{2-(2-\nu)\al}{2},\quad
\beta_3 = \frac{(2-\nu)\al}{2},\quad
\beta_4 = \frac{2-(2-\nu)\al}{2},\quad
\beta_5 = \gamma.
\]

Since $\gamma = \min\{\frac{2-(2-\nu)\al}{2}, \frac{\al(1-\nu)+2H-2}{2}\}$ and $\frac{(2-\nu)\al}{2} \ge \frac{2-(2-\nu)\al}{2}$ for $\al \le 1$, the minimum of all these exponents is
\[
\beta = \min\left\{ \frac{\al\nu}{2},\ \frac{2-(2-\nu)\al}{2},\ \frac{\al(1-\nu)+2H-2}{2} \right\}.
\]

For $t_1 = 0$, a direct estimate using the mild formulation yields the same exponent $\beta$. This completes the proof of Theorem~\ref{thm:holder}.

\begin{rmk}
The three terms in $\beta$ have distinct origins:
\begin{itemize}
    \item $\frac{\al\nu}{2}$ comes from the regularity of the initial condition propagated by $E_\al$.
    \item $\frac{2-(2-\nu)\al}{2}$ comes from the regularity of the Mittag-Leffler operator $\cS_\al$.
    \item $\frac{\al(1-\nu)+2H-2}{2}$ comes from the Hermite noise.
\end{itemize}
Depending on the parameters, one of these three is the smallest and determines the overall H\"older exponent of the solution.
\end{rmk}

\section{Asymptotic behavior (Proof of Theorem \ref{thm:nclt})}
\label{sec:asymptotic}

In this section we prove Theorem~\ref{thm:nclt}. We first recall the non-central limit theorem for Hermite processes, then we establish the approximation of the stochastic Navier-Stokes solution.

\subsection{Non-central limit theorem for the Hermite process}

Let $(\xi_n)_{n\ge 1}$ be a stationary centered Gaussian sequence with covariance
\[
\rho(n) := \E[\xi_0 \xi_n] \sim n^{2H-2}, \qquad n\to\infty,
\]
where $H\in(1/2,1)$. Such sequences exhibit long-range dependence because $\sum_{n=1}^\infty \rho(n) = \infty$.

For an integer $k\ge 1$, let $H_k(x)$ be the Hermite polynomial of degree $k$, normalized so that $\E[H_k(\xi)^2] = k!$. Define the normalized partial sum
\[
S_N(t) = \frac{1}{N^H} \sum_{n=1}^{[Nt]} H_k(\xi_n), \qquad t\ge 0.
\]

The following classical result is due to Dobrushin and Major \cite{dobrushin1979non} and Taqqu \cite{taqqu1979convergence}.

\begin{thm}[Non-central limit theorem for Hermite processes]\label{thm:nclt_hermite}
As $N\to\infty$, the finite-dimensional distributions of $S_N(t)$ converge to those of the Hermite process $Z_H^k(t)$:
\[
S_N(t) \xrightarrow{\text{f.d.d.}} Z_H^k(t).
\]
\end{thm}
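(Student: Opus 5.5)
The plan is the classical Dobrushin--Major/Taqqu argument: pass to the spectral representation of $(\xi_n)$, rewrite $S_N(t)$ as a $k$-fold Wiener--It\^o integral, and show that its kernel converges in $L^2$ to the spectral kernel of $Z_H^k$. We set up the normalisation so the limit has unit variance at $t=1$. The precise statement is that $\rho(n)=\E[\xi_0\xi_n]\sim c\,n^{2H-2}$ with $c$ tuned so that $\E[S_N(1)^2]\to 1$. The asymptotic is really $n^{k(2H_0-2)}$ for an underlying exponent $H_0$ with $k(2H_0-2)=2H-2$; equivalently $\rho(n)=L(n)n^{(2H-2)/k}$. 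I will read the hypothesis in this way, since otherwise the normalisation $N^H$ does not match when $k\ge 2$.

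\textbf{Step 1 (spectral representation).} Since $\rho$ is positive definite, we can write $\rho(n)=\int_{[-\pi,\pi]}e^{in\lambda}\,G(d\lambda)$ for a spectral measure $G$. We then realise $\xi_n=\int e^{in\lambda}\,W_G(d\lambda)$ with a complex Gaussian random spectral measure $W_G$. Because $H_k(\int g\,dW_G)=I_k^{W_G}(g^{\otimes k})$ for $\|g\|_{L^2(G)}=1$, this gives
\[
S_N(t)=N^{-H}\int_{[-\pi,\pi]^k}\sum_{n=1}^{[Nt]}e^{in(\lambda_1+\cdots+\lambda_k)}\,W_G(d\lambda_1)\cdots W_G(d\lambda_k).
\]
By a Tauberian argument, the regular variation of $\rho$ at infinity gives that $G$ has density $g(\lambda)\sim c'|\lambda|^{-(1-(2-2H)/k)}$ near $0$.

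\textbf{Step 2 (rescaling and kernel convergence).} Substitute $\lambda_j=x_j/N$. By the scaling property of the random spectral measure, $S_N(t)$ equals in law a $k$-fold integral of the kernel $K_N(t;x)=N^{-H}\sum_{n\le[Nt]}e^{in\sum x_j/N}$ against the rescaled measure $G_N(dx)=N\,g(x/N)\,dx$, restricted to $[-N\pi,N\pi]$. Pointwise, the kernel converges to $\int_0^t e^{is\sum x_j}ds=\frac{e^{it\sum x_j}-1}{i\sum x_j}$, and $G_N$ converges locally to $c'|x|^{-(1-(2-2H)/k)}dx$; the powers of $N$ match exactly because of the exponent chosen in Step 1. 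The limit integral is precisely the spectral representation of $Z_H^k(t)$ due to Taqqu and Dobrushin--Major. That it coincides in law with the time-domain definition in Subsection~\ref{subsec:hermite} is standard (Fourier transform of $(s-y)_+^{-(1/2+(1-H)/k)}$), and the normalisation is fixed by $\E[Z_H^k(1)^2]=1$.

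\textbf{Step 3 (main obstacle: $L^2$ convergence).} Pointwise convergence of the kernels is not enough; we need
\[
\int|K_N(t;x)|^2\,G_N^{\otimes k}(dx)\to\int|K(t;x)|^2\,\prod_j c'|x_j|^{-(1-(2-2H)/k)}dx.
\]
We would get this from the Dobrushin--Major criterion, which asks for vague convergence $K_N\,dG_N^{\otimes k}\to K\,dG^{\otimes k}$ plus uniform integrability. Alternatively, and more simply, compute the second moment directly as $N^{-2H}\sum_{n,m\le Nt}k!\,\rho(n-m)^k$. Since $\rho^k(n)\sim n^{2H-2}$ is summable only in the Riemann-sum sense, this converges to $k!\,t^{2H}$ times a constant, and the convergence of norms together with pointwise convergence yields $L^2$ convergence of the kernels. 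This is exactly where $H>1/2$ is used, to control the tails at large $|x|$ uniformly in $N$.

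\textbf{Step 4 (joint laws).} Once the kernels converge in $L^2$, the isometry of multiple Wiener--It\^o integrals (Lemma~\ref{lem:hyper} together with $L^2$ continuity of $I_k$) gives convergence in $L^2$, hence in law, of $S_N(t)$. For finite-dimensional distributions, apply the same argument to linear combinations $\sum_i a_iS_N(t_i)$; these are again single $k$-fold integrals with kernel $\sum_ia_iK_N(t_i;\cdot)$, and the Cram\'er--Wold device concludes.
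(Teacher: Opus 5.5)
Your route is the Dobrushin--Major argument that the paper only cites. Your rereading of the hypothesis is correct and necessary: with $\rho(n)\sim n^{2H-2}$ and normalisation $N^H$ one gets $\E[S_N(1)^2]\to0$ for $k\ge2$, and for $k=1$ the limiting variance is $1/(H(2H-1))$, not $1$.

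The gap is in Step~1 and in the shortcut of Step~3 that rests on it. Regular variation of $\rho$ at infinity does not give a spectral density with $g(\lambda)\sim c'|\lambda|^{\beta-1}$, $\beta=(2-2H)/k$. $G$ need not be absolutely continuous, and even a density can oscillate near $0$ without affecting $\rho$. For instance, multiplying it by $1+\sin(1/|\lambda|)$ near $0$ changes $\rho(n)$ only by $O(n^{-1/4-\beta/2})=o(n^{-\beta})$ whenever $\beta<1/2$, which is automatic for $k\ge2$. What the Tauberian argument actually delivers is local weak convergence of the rescaled measures $G_N(A)=N^{\beta}G(A/N)$ to $G_0(dx)=C|x|^{\beta-1}dx$. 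These must be paired with the kernel $K_N(t;x)=N^{-1}\sum_{n\le[Nt]}e^{in(x_1+\cdots+x_k)/N}$; with your split, $N^{-H}\sum_n$ against $Ng(x/N)\,dx$, neither factor converges. Without densities there is nothing to converge pointwise. Moreover, the kernels live in different spaces $L^2(G_N^{\otimes k})$ and are integrated against different random measures $W_{G_N}$. So ``$L^2$ convergence of the kernels'' in Step~3 and ``convergence in $L^2$ of $S_N(t)$'' in Step~4 are not defined.

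What is missing is the Dobrushin--Major continuity lemma. Suppose non-atomic spectral measures satisfy $G_N\to G_0$ locally weakly, $K_N\to K_0$ uniformly on compacts with $K_0$ continuous, and $\lim_{A\to\infty}\sup_N\int_{\R^k\setminus[-A,A]^k}|K_N|^2\,dG_N^{\otimes k}=0$. Then $\int K_N\,dW_{G_N}^{k}\Rightarrow\int K_0\,dW_{G_0}^{k}$, jointly for finitely many $t_i$ by linearity. This lemma is the real content of the theorem, and you must invoke or prove it.

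Your second-moment computation is exactly the right tool for its tail condition. $|K_N|^2dG_N^{\otimes k}\to|K_0|^2dG_0^{\otimes k}$ vaguely. If you also check that the total masses $\E[S_N(t)^2]/k!$ converge to $\int|K_0|^2dG_0^{\otimes k}$ (which ties the constant $C$ in $G_0$ to the constant in $\rho$), then vague convergence plus convergence of mass gives weak convergence, hence tightness.

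Alternatively, assume in addition a spectral density regularly varying at $0$, and write $W_G(d\lambda)=\sqrt{g(\lambda)}\,W(d\lambda)$ with a single complex white noise $W$. Then all rescaled kernels sit in $L^2(\R^k,dx)$ and your pointwise-plus-norm argument works as written, but under a strictly stronger assumption than the statement's. Finally, the tool needed in Step~4 is the isometry of $I_k$, not Lemma~\ref{lem:hyper}.
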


For $k=1$, one recovers fractional Brownian motion. For $k=2$, the limit is the Rosenblatt process. The Hermite rank of the functional $H_k(\xi)$ is exactly $k$, which determines both the scaling exponent $H$ and the nature of the limiting process.

\subsection{Approximation of the stochastic Navier-Stokes solution}

Consider the sequence of equations driven by the discrete-time noise $S_N(t)$:
\[
\begin{cases}
{}^C D_t^\al u^N(t) = -A u^N(t) + B(u^N(t)) + f(u^N(t)) + \dot{S}_N(t), \quad t>0,\\
u^N(0) = u_0,
\end{cases}
\]
where $\dot{S}_N(t)$ is the formal time derivative of $S_N(t)$. The mild formulation is
\[
u^N(t) = E_\al(t) u_0 + \int_0^t \cS_\al(t-s) [B(u^N(s)) + f(u^N(s))] \, ds + Z_k^N(t),
\]
with the approximated stochastic convolution
\[
Z_k^N(t) = \int_0^t \cS_\al(t-s) \, dS_N(s) = \frac{1}{N^H} \sum_{n=1}^{[Nt]} H_k(\xi_n) \cS_\al\!\left(t - \frac{n}{N}\right) \mathbf{1}_{[0,t]}\!\left(\frac{n}{N}\right).
\]

\begin{lemma}\label{lem:ZkN_convergence}
Under the assumptions of Theorem~\ref{thm:exist}, we have $Z_k^N \Rightarrow Z_k$ in $C([0,T];\dot{H}^\nu)$ as $N\to\infty$.
\end{lemma}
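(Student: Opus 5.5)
The plan is the standard two-step scheme for weak convergence in $C([0,T];\dot{H}^\nu)$: prove convergence of the finite-dimensional distributions of $Z_k^N$ to those of $Z_k$, then prove tightness. Both steps rest on writing $Z_k^N$ as a Wiener-type integral against $S_N$ and transferring the estimates of Section~\ref{sec:convolution}.

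\textbf{Reduction by spectral truncation.} Expand along the eigenbasis: $Z_k^N(t)=\sum_j \langle Z_k^N(t),e_j\rangle e_j$ with scalar components
\[
z^N_j(t)=\frac{1}{N^H}\sum_{n\le [Nt]} s_{\al,j}\!\left(t-\tfrac nN\right)H_k(\xi_n), \qquad s_{\al,j}(r)=r^{\al-1}E_{\al,\al}(-\la_j r^\al).
\]
For a fixed level $M$, let $P_M$ denote the projection onto $\mathrm{span}\{e_1,\dots,e_M\}$. The argument then runs in three parts.
\begin{enumerate}
\item Show $P_M Z_k^N\Rightarrow P_M Z_k$ for each fixed $M$.
\item Show that $\sup_N \E\|(I-P_M)Z_k^N(t)\|_\nu^2\to0$ as $M\to\infty$, uniformly in $t$.
\item Conclude by a standard approximation theorem (Billingsley, Thm.~3.2).
\end{enumerate}
For the tail bound in (2), I will compute the second moment using $\E[H_k(\xi_m)H_k(\xi_n)]=k!\rho(n-m)^k$. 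The assumption $\rho(n)\sim n^{2H-2}$ should really be read as $\rho(n)\sim n^{(2H-2)/k}$, since this is the normalization under which $N^{-H}$ is the right scaling; I will state this. With that reading, the discrete double sum
\[
N^{-2H}\sum_{m,n}\rho(n-m)^k\, a_m a_n
\]
is dominated by a Riemann sum of $\iint a(u)a(v)|u-v|^{2H-2}\,du\,dv$. Its tail in $j$ is then controlled exactly as in Lemma~\ref{lem:HS} and Lemma~\ref{lem:Zk_L2}, restricted to $j>M$. The result is $C\sum_{j>M}$ of something summable, and this bound is uniform in $N$.

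\textbf{F.d.d.\ for a fixed mode.} For one component, $z_j^N(t)=\int_0^t s_{\al,j}(t-s)\,dS_N(s)$ is a Stieltjes integral against a step function. On any interval $[0,t-\eps]$ the kernel $s\mapsto s_{\al,j}(t-s)$ is smooth, so integration by parts gives
\[
z_j^N(t)\approx s_{\al,j}(\eps)S_N(t-\eps)+\int_0^{t-\eps} s_{\al,j}'(t-s)S_N(s)\,ds .
\]
This is a continuous functional of the path of $S_N$. Since the f.d.d.\ convergence of Theorem~\ref{thm:nclt_hermite} is not functional convergence, I will upgrade it to weak convergence in $D[0,T]$. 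Tightness of $S_N$ follows from $\E|S_N(t)-S_N(s)|^2\le C|t-s|^{2H}$, which holds on the grid, and from hypercontractivity: $S_N$ lives in the $k$-th chaos of the underlying Gaussian sequence, so Lemma~\ref{lem:hyper} applies to the increments. The singular piece $\int_{t-\eps}^t$ is handled uniformly in $N$ by the same covariance computation, because $a>-1$. This yields f.d.d.\ convergence of $P_MZ_k^N$.

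\textbf{Tightness in $C([0,T];\dot{H}^\nu)$.} Strictly, $Z_k^N$ is only càdlàg, because of the jumps $N^{-H}H_k(\xi_n)\cS_\al(0^+)$. I will therefore either work in $D$ and then use that the limit is continuous, or replace $S_N$ by its polygonal interpolation. For the interpolated process I will reproduce the proof of Theorem~\ref{thm:reg2p}, replacing the exact covariance $|u-v|^{2H-2}$ by the discrete covariance bound. Hypercontractivity holds because each $Z_k^N(t)$ lies in the $k$-th chaos. This should give
\[
\E\|Z_k^N(t_2)-Z_k^N(t_1)\|_\nu^p\le C_{p,k}^p (t_2-t_1)^{p\gamma}
\]
uniformly in $N$. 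Choosing $p>1/\gamma$, Kolmogorov's criterion gives tightness, and compactness of $\dot{H}^{\nu'}\hookrightarrow\dot{H}^\nu$ for slightly larger $\nu'$ (still satisfying $\al(1-\nu')+2H>2$) handles the infinite-dimensional direction.

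\textbf{Main obstacle.} The hardest step will be the uniform-in-$N$ comparison between the discrete covariance sum with singular weights $(t-n/N)^{a}$ and the continuous integral. Near the diagonal and near $s=t$, the lattice sum has to be controlled without losing the exponent. My first attempt is a direct dyadic splitting of the lattice sum into near-diagonal and off-diagonal blocks, using $|\rho(m)|^k\le C(1+|m|)^{2H-2}$.
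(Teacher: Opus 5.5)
There is a genuine gap, and it sits where you place your ``main obstacle'': the behaviour of $\cS_\al$ at $0$.

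\textbf{What fails.} You say $Z_k^N$ is c\`adl\`ag with jumps $N^{-H}H_k(\xi_n)\cS_\al(0^+)$. But $\cS_\al(0^+)$ is not a bounded operator. Since $E_{\al,\al}(r)\to\Ga(\al)^{-1}I$, we have $\cS_\al(r)\sim r^{\al-1}/\Ga(\al)$. The spectral sum (using $E_{\al,\al}(-x)\ge c>0$ for $x\le 1$) gives $\|\cS_\al(r)\|_{\cL_2(L^2,\dot H^\nu)}\asymp r^{a}$ with $a=\frac{\al(1-\nu)-2}{2}<0$.

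With the point-evaluation definition of $Z_k^N$ (the paper's, and the one your f.d.d.\ and tail steps use through $s_{\al,j}(t-n/N)$), two things go wrong.
\begin{enumerate}
\item $Z_k^N(t)$ is undefined at grid points, and $\|Z_k^N(t)\|_\nu\to\infty$ as $t\downarrow n/N$. So $Z_k^N$ is not a random element of $C([0,T];\dot H^\nu)$, nor of $D$.
\item The claim that ``the singular piece is handled uniformly in $N$ because $a>-1$'' is false. Integrability of $r^a$ controls integrals of the kernel, not its lattice samples. The term nearest to $t$ has $\dot H^\nu$-norm $\asymp N^{-H-a}\{Nt\}^{a}|H_k(\xi_{[Nt]})|$. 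For irrational $t$ there are infinitely many $N$ with $\{Nt\}<1/N$ (every other continued-fraction convergent). Along these $N$ the term is $\gtrsim N^{2-H-\al(1-\nu)}|H_k(\xi_{[Nt]})|\to\infty$, while the remaining terms stay bounded in $L^2$.
\end{enumerate}
Hence neither f.d.d.\ convergence of $Z_k^N(t)$ in $\dot H^\nu$ nor your uniform tail bound (2) holds for this $Z_k^N$. The lemma itself fails in that form, and no dyadic splitting of the lattice sum can repair the comparison near $s=t$. The paper's proof passes over the same point by asserting ``continuity'' of $S\mapsto\int_0^t\cS_\al(t-s)\,dS(s)$, which is false for this singular kernel.

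\textbf{The fix.} The remedy is the one you mention only as an option for tightness, but it has to be the definition from the start. Use the polygonal interpolation of $S_N$, that is,
\[
\tilde Z_k^N(t)=N^{1-H}\sum_n H_k(\xi_n)\int_{I_n\cap[0,t]}\cS_\al(t-s)\,ds,\qquad I_n=\bigl((n-1)/N,n/N\bigr].
\]
\begin{enumerate}
\item For $u\in I_m$, $v\in I_n$ one has $|u-v|\le(1+|n-m|)/N$. With your corrected normalization $|\rho(n)|^k\le C(1+|n|)^{2H-2}$ (which is right), this gives $N^{2-2H}|\rho(n-m)|^k\le C|u-v|^{2H-2}$.
\item Consequently the discrete second moment is dominated, termwise and uniformly in $N$, by $C\int_0^t\!\int_0^t\|\cS_\al(t-u)\|_{\cL_2}\|\cS_\al(t-v)\|_{\cL_2}|u-v|^{2H-2}\,du\,dv$.
\item Inserting $I-P_M$ and applying dominated convergence gives (2) uniformly in $t$. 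The cell containing $t$ contributes only $O(N^{-H-a})\to0$, since $H+a>0$.
\end{enumerate}
For the Kolmogorov bound, do not copy Theorem~\ref{thm:reg2p}. Its bound $\|\cS_\al(t_2-s)-\cS_\al(t_1-s)\|\le C(t_2-t_1)^{\frac{2-(2-\nu)\al}{2}}$ cannot hold as $t_1-s\to0$. Moreover, an operator-norm bound does not control the Hilbert--Schmidt inner products in the isometry. Derive the increment estimate directly from Hilbert--Schmidt bounds on $\cS_\al(r+h)-\cS_\al(r)$.

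With these changes your f.d.d.-plus-tightness scheme goes through, but for $\tilde Z_k^N$, i.e.\ for a corrected version of the statement.
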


\begin{proof}
The convergence of finite-dimensional distributions follows from Theorem~\ref{thm:nclt_hermite} and the continuity of the map $S \mapsto \int_0^t \cS_\al(t-s) dS(s)$. Tightness follows from the uniform moment estimates obtained via hypercontractivity. Specifically, using the hypercontractivity estimate Lemma~\ref{lem:hyper} and the smoothing properties of $\cS_\al$, one shows that for any $p\ge2$,
\[
\E\left[\|Z_k^N(t)-Z_k^N(s)\|_\nu^p\right] \le C |t-s|^{p\gamma},
\]
with $\gamma$ as in Theorem~\ref{thm:reg2p}. Kolmogorov's criterion (see \cite[Theorem 3.3]{daprato2014stochastic}) gives tightness in $C([0,T];\dot{H}^{\nu-\eps})$ for any $\eps>0$. The compact embedding $\dot{H}^{\nu+\delta} \hookrightarrow \dot{H}^\nu$ (Proposition~\ref{prop:sobolev}) combined with the moment bound $\sup_N \E[\|Z_k^N(t)\|_{\nu+\delta}^p]<\infty$ (which follows from Theorem~\ref{thm:reg1p} applied to $Z_k^N$) yields tightness in $C([0,T];\dot{H}^\nu)$ by the Aubin-Lions-Simon compactness criterion \cite{simon1987compact}.
\end{proof}

\begin{thm}[Non-central limit theorem for the solution]\label{thm:nclt_final}
Under the assumptions of Theorem~\ref{thm:exist}, and assuming additionally that $B$ and $f$ are continuous on $\dot{H}^\nu$, we have
\[
u^N \xrightarrow{\text{law}} u \quad \text{in } C([0,T];\dot{H}^\nu).
\]
\end{thm}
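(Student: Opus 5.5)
The plan is to view the solution as a continuous function of the stochastic convolution and then apply the continuous mapping theorem, with Lemma~\ref{lem:ZkN_convergence} supplying the weak convergence of the input.

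\textbf{Step 1: isolate the noise.} Set $v = u - Z_k$ and $v^N = u^N - Z_k^N$. Then $v$ solves the random integral equation
\[
v(t) = E_\al(t)u_0 + \int_0^t \cS_\al(t-s)\bigl[B(v(s)+z(s)) + f(v(s)+z(s))\bigr]\,ds
\]
with $z=Z_k$, and $v^N$ solves the same equation with $z=Z_k^N$. For a fixed deterministic path $z$ and fixed $u_0$, let $\Psi(z)$ denote the fixed point of this map, so that $u = z + \Psi(z)$.

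\textbf{Step 2: well-posedness of the pathwise map.} I would rerun the contraction argument of Theorem~\ref{thm:exist_final} pathwise. This requires a deterministic analogue of $\cW_T$ on $[0,T]$, and the input $z$ must lie in a space $\mathcal Z$ that controls both $\sup_t\|z(t)\|_\nu$ and a weighted $\dot H^{\nu+1}$ norm. The natural candidate is $\mathcal Z = \{z\in C([0,T];\dot H^\nu) : \sup_t t^{\al(\nu+1)/2}\|z(t)\|_{\nu+1}<\infty\}$. Lemma~\ref{lem:F_estimates} and the Lipschitz bounds for $B(u)-B(v)$ and $f$ in that proof then give the following.
\begin{itemize}
\item Existence and uniqueness of $\Psi(z)$ on a time interval $T(R)$, for $\|z\|_{\mathcal Z}\le R$.
\item Local Lipschitz continuity of $\Psi$. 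This comes from a Gronwall-type estimate on $\Psi(z_1)-\Psi(z_2)$, obtained by writing $B(v_1+z_1)-B(v_2+z_2)$ bilinearly.
\end{itemize}

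\textbf{Step 3: continuous mapping.} Combine Lemma~\ref{lem:ZkN_convergence} with the continuity of $z\mapsto z+\Psi(z)$ to get $u^N = Z_k^N+\Psi(Z_k^N)\Rightarrow Z_k+\Psi(Z_k)=u$. Two points need care here.
\begin{itemize}
\item The existence time depends on the size of $z$. I would handle this by localization: introduce stopping-type events $\{\|z\|_{\mathcal Z}\le R\}$ and use the Portmanteau theorem, letting $R\to\infty$. The tail probabilities are uniformly small by tightness and the uniform moment bounds coming from hypercontractivity.
\item Convergence is needed in the stronger topology of $\mathcal Z$, not merely in $C([0,T];\dot H^\nu)$. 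For this I would show tightness of the weighted $\dot H^{\nu+1}$ component. The route is to redo the argument of Lemma~\ref{lem:ZkN_convergence} with the moment bounds for $t^{\al(\nu+1)/2}\|Z_k^N(t)\|_{\nu+1}$ asserted in Lemma~\ref{lem:Zk_estimate}, and then identify the limit through f.d.d.\ convergence.
\end{itemize}

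\textbf{Main obstacle.} I expect the hardest part to be the weighted $\dot H^{\nu+1}$ control of $Z_k^N$ uniformly in $N$. The Hilbert-Schmidt bound of Lemma~\ref{lem:HS} fails at regularity $\nu+1\ge1$. Moreover, $Z_k^N$ is a Riemann-sum convolution of a discrete noise rather than a Wiener integral, so the isometry of Lemma~\ref{lem:isometry_Zk} has to be replaced by a direct covariance computation using $\rho(n)\sim n^{2H-2}$.

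If this cannot be done, the fallback is to take $B$ continuous on $\dot H^\nu$, as the theorem explicitly assumes. One would then close the fixed point in $C([0,T];\dot H^\nu)$ alone, using the lower estimate $\|B(u,v)\|_{\nu-1}\le C\|u\|_\nu\|v\|_\nu$ together with the smoothing of $\cS_\al$ from $\dot H^{\nu-1}$ to $\dot H^\nu$. This would make $\Psi$ continuous directly on $C([0,T];\dot H^\nu)$, and Lemma~\ref{lem:ZkN_convergence} would then suffice as it stands.
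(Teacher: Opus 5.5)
You follow the paper's strategy: a solution map, the continuous mapping theorem, and Lemma~\ref{lem:ZkN_convergence}. You also correctly see what the paper glosses over. The contraction of Section~\ref{sec:existence} is carried out in $\cW_T$, so it does not give the continuity of $\Psi$ on $C([0,T];\dot H^\nu)$ that the paper's proof simply asserts.

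\textbf{Your main route cannot work.} The obstacle you flag is not technical: $Z_k$ has no $\dot H^{\nu+1}$ regularity at all. By Lemma~\ref{lem:isometry_Zk}, $\E\|Z_k(t)\|_\mu^2=\sum_j\lambda_j^\mu\sigma_j^2(t)$, where $\sigma_j^2(t)=H(2H-1)\int_0^t\!\int_0^t s_{\al,j}(x)s_{\al,j}(y)|x-y|^{2H-2}\,dx\,dy$. Rescale $x=\lambda_j^{-1/\al}X$, $y=\lambda_j^{-1/\al}Y$. This gives $\sigma_j^2(t)=\lambda_j^{-2+(2-2H)/\al}J(\lambda_j^{1/\al}t)$, where $J$ is nondecreasing because $s_{\al,j}\ge0$ (from $\xi_\al\ge0$). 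With $\lambda_j\asymp j$, the series is therefore finite only if $\al(1-\mu)+2H>2$, which matches the paper's condition at $\mu=\nu$. At $\mu=\nu+1$ this would require $2H>2+\al\nu$, which is impossible. The time weight $t^{\al(\nu+1)/2}$ cannot help, because the divergence is in space. Consequences:
\begin{itemize}
\item $\E\|Z_k(t)\|_{\nu+1}^2=\infty$ for every $t>0$, so the weighted part of Lemma~\ref{lem:Zk_estimate} is false.
\item No bound on $Z_k^N$ in that norm can be uniform in $N$, since it would pass to the limit by lower semicontinuity of $\|\cdot\|_{\nu+1}$ on $\dot H^\nu$.
\item Convergence in your space $\mathcal{Z}$ is therefore unavailable.
\end{itemize}

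\textbf{Your fallback is the right idea, but it relies on a false estimate.} The bound $\|B(u,v)\|_{\nu-1}\le C\|u\|_\nu\|v\|_\nu$ from Lemma~\ref{lem:B_estimates} fails in two dimensions for $\nu<1$. Take $u=v=\phi(\cdot/\eps)$ with $\phi$ smooth, compactly supported and divergence-free: the left side is of order $\eps^{1-\nu}$, while the right side is of order $\eps^{2-2\nu}$. What is true for $0<\nu<\frac12$ is $\|B(u,v)\|_{2\nu-2}\le C\|u\|_\nu\|v\|_\nu$. To see it, integrate by parts against $w\in\dot H^{2-2\nu}$, apply H\"older with exponents $\frac{2}{1-\nu},\frac{2}{1-\nu},\frac1\nu$, and use $\dot H^\nu\subset L^{2/(1-\nu)}$ and $H^{1-2\nu}\subset L^{1/\nu}$ (the latter for $\nabla w$). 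Combine this with $\|\cS_\al(t)\|_{\cL(\dot H^{2\nu-2},\dot H^\nu)}\le Ct^{\al\nu/2-1}$, which is integrable exactly because $\nu>0$. The fixed point then closes pathwise in $C([0,T];\dot H^\nu)$, on a time depending on $\|u_0\|_\nu+\|z\|_{C([0,T];\dot H^\nu)}$, and $z\mapsto u$ is locally Lipschitz there.

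\textbf{Your localization step does not give a fixed interval.} On $\{\|z\|\le R\}$ the guaranteed existence time $T(R)$ tends to $0$ as $R\to\infty$, so letting $R\to\infty$ gives nothing on a fixed $[0,T]$. What the continuous mapping theorem needs is continuity of $\Psi$ at almost every point under the law of $Z_k$. Local well-posedness plus continuation gives continuity on the open set of paths $z$ whose solution does not blow up on $[0,T]$. So you need $u$ itself not to blow up on $[0,T]$ almost surely. On an arbitrary interval this is a global existence result. Neither you nor the paper supplies it; the paper's ``standard patching'' presupposes an a priori bound.

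\textbf{Lemma~\ref{lem:ZkN_convergence} also needs care.} With $S_N$ a step function, $Z_k^N$ contains $\cS_\al(t-n/N)$, which blows up like $(t-n/N)^{\al-1}$ right after each jump. So $Z_k^N\notin C([0,T];\dot H^\nu)$, and the lemma you want to use ``as it stands'' can only hold for the polygonal interpolation of $S_N$.
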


\begin{proof}
Define the solution map $\Psi: C([0,T];\dot{H}^\nu) \to C([0,T];\dot{H}^\nu)$ by $\Psi(Z) = u$, where $u$ solves the deterministic equation
\[
u(t) = E_\al(t) u_0 + \int_0^t \cS_\al(t-s) [B(u(s)) + f(u(s))] \, ds + Z(t).
\]

From the contraction estimates in Section~\ref{sec:existence}, $\Psi$ is continuous on bounded subsets of $C([0,T];\dot{H}^\nu)$ for sufficiently small $T$ (depending on the bound of $Z$). By Lemma~\ref{lem:ZkN_convergence}, $Z_k^N \Rightarrow Z_k$ in $C([0,T];\dot{H}^\nu)$. Since $u^N = \Psi(Z_k^N)$ and $u = \Psi(Z_k)$, the continuous mapping theorem yields $u^N \Rightarrow u$. The extension to arbitrary $T$ follows by a standard patching argument (see \cite[Chapter 7]{daprato2014stochastic}).

The non-central limit theorem (Theorem~\ref{thm:nclt}) demonstrates
that the Hermite-driven fractional Navier-Stokes equations arise
naturally as scaling limits of discrete systems with long-range
dependence. This provides a physical justification for the use
of Hermite noise in modeling turbulent flows, where such
long-range correlations are commonly observed.
\end{proof}

\section{Conclusion and open problems}
\label{sec:conclusion}

We have established a comprehensive well-posedness and regularity theory for time-fractional stochastic Navier-Stokes equations driven by additive Hermite noise of arbitrary order $k\ge1$. The estimates explicitly capture the dependence on $k$ via hypercontractivity. A non-central limit theorem linking the solution to discrete approximations is proved.

Several important questions remain open and are left for future investigation:

\begin{enumerate}
\item \textbf{Global existence.} Does the solution exist globally in time for $d=2$? For the deterministic Navier-Stokes equations, global existence in 2D follows from energy estimates. In the fractional stochastic case, the Caputo memory term and the Hermite noise interact in a way that makes energy estimates significantly harder.

\item \textbf{Multiplicative noise.} Extending the results to multiplicative noise $\sigma(u)\dot{Z}_H^k$ would require stochastic integration with random integrands in the Hermite chaos setting, demanding Malliavin calculus estimates that are not yet available for the nonlinear terms in Navier-Stokes equations.

\item \textbf{Dimension 3.} The three-dimensional case is physically more relevant but technically much harder: the embedding $\dot{H}^{\nu+1} \hookrightarrow L^\infty$ requires $\nu > 1/2$ (instead of $\nu > 0$ in 2D), which tightens the admissible parameter range considerably.

\item \textbf{Rigorous parameter estimation.} The identification of $\al$, $H$, $k$, and the drift coefficient $\theta$ from discrete observations of the velocity field is a natural statistical problem raised by the non-central limit theorem.

\item \textbf{Scaling covariance.} If one treats the Mittag-Leffler operators as admitting power-law scaling (which holds asymptotically for large eigenvalues of the Stokes operator), the solution formally satisfies
\[
u^{(\la)}(t) \stackrel{(d)}{\approx}
\la^{\frac{H-1}{1-\al}} u\!\left(\la^{-\frac{1}{1-\al}} t\right),
\]
where $u^{(\la)}$ denotes the solution with forcing $\la\,\dot{Z}_H^k$
in place of $\dot{Z}_H^k$, and the approximation holds in the
large-eigenvalue regime. A rigorous proof would require a detailed
spectral analysis of the Mittag-Leffler operators and is left for
future work.
\end{enumerate}

\end{document}